\documentclass[oneside,a4paper,11pt,reqno]{amsart}

\usepackage[T1]{fontenc}

\usepackage[a4paper, scale={0.72,0.74}, marginratio={1:1}, footskip=7mm, headsep=10mm]{geometry}

\usepackage{verbatim}
\usepackage{amsmath} \allowdisplaybreaks
\usepackage{amsfonts}
\usepackage{color}
\usepackage{makecell}
\usepackage{bm}
\usepackage{mathabx}
\usepackage[title]{appendix}

\allowdisplaybreaks
\usepackage{tikz}
\usetikzlibrary{topaths,fit,backgrounds,shapes.geometric}
\usetikzlibrary{graphs}
\usetikzlibrary{calc}
\usepackage{multirow}
\usepackage{enumerate}

\usepackage{amsmath, amssymb, amsthm}
\usepackage{pdflscape}
\usepackage[table]{xcolor}
\usepackage{tikz}
\usetikzlibrary{shapes, backgrounds, positioning}
\usepackage{pgfplots}
\usetikzlibrary{graphs}
 \usepackage{enumerate}
 \usepackage{cases}
 \usepackage{graphicx}
 \usepackage{xcolor}
 \usepackage{geometry}
 \usepackage{indentfirst}
 \usepackage{amsfonts}
 \usepackage{mathrsfs}
 \usepackage{hyperref}
 \usepackage{multicol}
 \usepackage{url}
\usepackage{float}
 \usepackage{color}
 \usepackage{hyperref}
 \usepackage{verbatim}
\usepackage{multirow}
 \usepackage{stackengine}
\usepackage{bbold}
\usepackage{float}

 \usepackage{fancyhdr}
 
 \usepackage{caption}
 \usepackage{mathtools}

\usepackage{cases}
\usepackage{graphicx}
\usepackage{xcolor}
\usepackage{geometry}
\usepackage{indentfirst}
\usepackage{amsfonts}
\usepackage{mathrsfs}
\usepackage{multicol}
\usepackage{url}
\usepackage{float}
\usepackage{color}
\usepackage{verbatim}
\usepackage{stackengine}
\stackMath
\usepackage[normalem]{ulem}
\usepackage{cancel}

\usepackage{fancyhdr}
\usepackage{caption}
\usepackage{mathtools}
\usepackage{dsfont}
\usepackage{graphicx}
\usepackage{esdiff}
\usepackage{physics}
\usepackage{subcaption}

\usepackage{nicematrix}

\definecolor{citeorange}{RGB}{200,120,0}
\definecolor{citegreen}{RGB}{0,128,128}

\usepackage{hyperref}
\hypersetup{
  colorlinks=true,
  linkcolor=citegreen,
  citecolor=citeorange,
  urlcolor=
}

\usepackage{amssymb}

\newtheorem{theorem}{Theorem}
 \newtheorem{lemma} {Lemma}[section]
 \newtheorem{prop}[lemma]{Proposition}
\newtheorem{corollary}[lemma]{Corollary}
 \newtheorem{definition}[lemma]{Definition}
 \newtheorem{oss}[lemma]{Remark}
\numberwithin{equation}{section}
\definecolor{color1}{rgb}{0.7, 0.85, 1.0} 
\definecolor{color2}{rgb}{0.4, 0.6, 0.9}  
\definecolor{color3}{rgb}{0.2, 0.4, 0.8}  
\definecolor{giallograno}{rgb}{1.0, 0.75,0.25} 
\definecolor{tealmid}{RGB}{0, 128, 128}
\definecolor{teallight}{RGB}{170, 210, 210}
\definecolor{tealdeep}{RGB}{0, 80, 80}

\newenvironment{myenumerate}{
\renewcommand{\theenumi}{(\roman{enumi})}
\renewcommand{\labelenumi}{\theenumi}
\begin{list}{\labelenumi}
	{
	\setlength{\itemsep}{0.4em}
	\setlength{\topsep}{0.5em}
	\setlength\leftmargin{2.45em}
	\setlength\labelwidth{2.05em}
	\setlength{\labelsep}{0.4em}
	\usecounter{enumi}
	}
	}
{\end{list}
}
\renewenvironment{enumerate}{
\begin{myenumerate}}
{\end{myenumerate}}

\newenvironment{myitemize}{
\begin{list}{$\bullet$}
 	{
	\setlength{\itemsep}{0.4em}
	\setlength{\topsep}{0.5em}
	\setlength\leftmargin{2.45em}
	\setlength\labelwidth{2.05em}
	\setlength{\labelsep}{0.4em}
	}
	}
{\end{list}}
\renewenvironment{itemize}{
\begin{myitemize}}
{\end{myitemize}}

\def \E {\mathbb{E}}
\def \N  {\mathbb{N}} 
\def \P {\mathbb{P}}
\def \R  {\mathbb{R}} 
 
\def \ind {\ind}

\def \WW {{\mathcal W}}
\def \HH {{\mathcal H}} 
\def \CC {{\mathcal C}}

\def \ind {\hbox{\tt 1\hskip-.33em l}}

\newcommand{\Cmax}{\mathcal{C}_{max}}

\renewcommand{\epsilon}{\varepsilon}
\newcommand{\eps}{\varepsilon}

\newcommand{\Pp}{\mathbb{P}}
\newcommand{\Pb}{\bm{\mathrm{P}}}
\newcommand{\Eb}{\bm{\mathrm{E}}}
\newcommand{\Varb}{\bm{\mathrm{Var}}}
\newcommand{\Gg}{\mathbb{G}}

\newcommand{\Ep}{\mathbb{E}}

\newcommand{\cc}{\mathrm{c}}

\newcounter{para}

\usepackage[
    backend=biber,
    style=numeric,        
    sorting=nyt,          
    giveninits=true,      
    maxbibnames=99        
]{biblatex}

\DeclareNameAlias{default}{family-given}

\DeclareFieldFormat[article, inbook, incollection, inproceedings, patent, thesis, unpublished]{title}{#1}

\renewbibmacro{in:}{%
  \ifentrytype{article}{}{\printtext{\bibstring{in}\intitleunit}}}

\appto{\bibsetup}{\sloppy}
\begin{document}

\title[Inhomogeneous Random Graphs]{Inhomogeneous Random Graphs: How Heavy Tails Reshape Connectivity}

\author[L.~Andreis]{Luisa Andreis} 
\address{Luisa Andreis, Università degli Studi di Torino, Palazzo Campana, Via Carlo Alberto 10, 10123, Turin, Italy}
\email{luisa.andreis@unito.it}

\author[L.~Avena]{Luca Avena} 
\address{Luca Avena, Dipartimento di Matematica e Informatica ``Ulisse Dini'', Universit\`a degli Studi di Firenze, Viale Morgagni 67/A, 50134, Florence, Italy}
\email{luca.avena@unifi.it}

\author[R.~Hazra]{RAJAT SUBHRA HAZRA}
\address{Rajat Subhra Hazra, Mathematical Institute, Leiden University, Einsteinweg 55, 2333
CC Leiden, The Netherlands}
\email{r.s.hazra@math.leidenuniv.nl}

\author[E.~Matteini]{Elena Matteini}
\address{Elena Matteini, Dipartimento di Matematica e Informatica ``Ulisse Dini'', Universit\`a degli Studi di Firenze, Viale Morgagni 67/A, 50134, Florence, Italy}
\email{elena.matteini@unifi.it}

\begin{abstract}
We study inhomogeneous random graphs on $n$ vertices with independent Pareto weights of parameter $\alpha>0$ and connection probabilities $\min (\varepsilon_n W_iW_j, 1)$. Our main focus is the infinite-mean regime $0<\alpha<1$. Under the scaling $n\varepsilon_n^\alpha=\lambda$, with $\lambda>0$ fixed, the global architecture of the graph is entirely governed by the Poisson point process $\Pi$ of rescaled extreme weights. In particular, the size of the largest component, divided by $n$, converges to a non-degenerate random variable expressed as an explicit functional of $\Pi$. Through the point process $\Pi$, we also characterize the exact limiting connectivity probability, uncovering a non-sharp connectivity threshold.
The second largest component and the diameter of the largest component are bounded with high probability. The distance between two uniformly chosen vertices in the largest component is at most three with high probability. We also prove convergence of the point process of rescaled degrees and identify the limiting distribution of the maximum degree. For completeness, we obtain expected degree asymptotics, typical degree limits and connectivity regimes for all $\alpha>0$. Due to the failure of classical branching process approximations in the infinite-mean regime, our approach relies fundamentally on analyzing the extreme weight environment $\Pi$.

\end{abstract}

\keywords{Inhomogeneous random graphs, scale-free networks, heavy-tailed weights, infinite-mean regime}
\subjclass[2020]{05C80, 60G55, 05C82}




\maketitle
\tableofcontents


\section{Introduction}\label{model}

Inhomogeneous random graphs provide a rich and treatable framework for modeling networks with heterogeneous connectivity. Since the seminal work~\cite{Bollobas2007Phase}, they have been extensively studied under several different assumptions, see~\cite{VanDerHofstad2024RandomV2} and references therein for a clear introduction to the model. We are interested in studying the setting where edges are independently drawn and $i,j\in [n]$ are connected with probability
\begin{equation*}
    p_{ij}=\min\left(\frac{k_n(W_i,W_j)}{n}, 1\right),
\end{equation*}
where $k_n$ is a symmetric, non-negative kernel (possibly depending on $n$) and $\{W_i\}_{i\in [n]}$ are i.i.d.\ random weights. In particular, we focus on the  rank-one kernel $k_n(x,y)=n\varepsilon xy$, where the parameter $\eps$ serves as a convenient scaling parameter. 
A closely related model has been studied as well in~\cite{avena2026inhomogeneous, catanzaro2026clustering}, see Section~\ref{subsec:literature} for comparisons. 
A key assumption underlying many classical results as in~\cite{Bollobas2007Phase, VanDerHofstad2024RandomV2}—including those on phase transitions and local weak convergence—is that the kernel $k_n$ is \emph{graphical} with respect to the weight distribution, which in particular requires suitable integrability conditions (see \cite[Definition 2.7]{Bollobas2007Phase}). These conditions ensure that the graph is governed by the contribution of typical vertices and enable the use of branching process approximations. Far less is known in other regimes.

In this paper, we consider the paradigmatic case in which the weights follow a Pareto distribution with parameter \(\alpha>0\), which determines the heaviness of the tail: the smaller \(\alpha\), the heavier the tail.
When $\alpha \leq 1$, and the mean weight is infinite, regularity conditions in the model break down and the graphical assumption ceases to be true. As a consequence, the graph is no longer controlled by typical vertices. For this reason, while the regime $\alpha>1$ has been extensively studied—revealing a rich structure depending on whether the variance is finite ($\alpha>2$) or infinite ($1<\alpha\leq 2$)—the infinite-mean case $\alpha \leq 1$ is more delicate. In this regime, classical techniques based on local weak convergence and branching process approximations lose their effectiveness, and new phenomena emerge. In this work, we identify a fundamentally different behavior in the regime $\alpha<1$: the graph lacks self-averaging and its macroscopic structure remains sensitive to the realization of  extreme weights. 
This affects the connectivity, which, in the infinite-mean regime, is shaped by the emergence of extreme vertices, marking a clear departure from classical results.

In detail: in Theorem \ref{thm:expecteddegree}, we characterize the expected degree of a typical vertex and identify the preferential scalings required to ensure the convergence in distribution of the degree. In Theorem \ref{thm:dust and connectivity threshold}, we investigate the connectivity of the graph and its geometric properties, with particular emphasis on the role of hubs in determining the network's diameter. It identifies as thresholds for the connectivity $\varepsilon=\Theta(\ln(n)/n)$ for $\alpha>1$ and $\varepsilon^\alpha=\Theta(n^{-1})$ for $\alpha \leq 1$. Figure \ref{fig:regimi_connettivita} provides a graphical representation of the main results established in Theorem \ref{thm:expecteddegree} and Theorem \ref{thm:dust and connectivity threshold}. 

For $\alpha > 1$, our model is consistent with the classical Erd\H{o}s--R\'enyi-like picture, see comparison with~\cite{Devroye2014Connectivity} in Section~\ref{subsec:phenomena_comparison}. In terms of connectivity this means that the threshold above which the graph is connected with high probability is $\varepsilon = \Theta(\log n / n)$, in this regime  even low-weight vertices are not isolated. In contrast, for $\alpha < 1$, global connectivity is triggered as soon as at least one vertex weights exceed the threshold $\varepsilon^{-1}$—forming what we call the Core of the graph. This occurs when $\varepsilon^\alpha n \to \infty$, effectively removing the need for any logarithmic correction. In this regime, the scaling $\varepsilon^\alpha = \lambda / n$ identifies a "critical window" rather than a sharp deterministic threshold as in \cite{Devroye2014Connectivity}.

For the above reasons, we later focus on the infinite-mean regime under the relevant scaling $\varepsilon^\alpha = \lambda/n$. 
Theorem \ref{thm:giantsize} establishes that the size of the largest component, rescaled by $n$, converges in distribution to a non-degenerate random variable. This behavior reflects the dominant role of extreme weights, which induce persistent fluctuations and mark a fundamental departure from classical finite-mean theory, where the size of the giant component is typically deterministic.
Theorem \ref{thm:nosharp} shows that, in contrast with classical results, in the infinite mean regime there is not a sharp threshold for connectivity, meaning that the graph has both a positive probability to be connected or to contain an isolated vertex.
Theorem \ref{thm:2nd_component} shows that the second largest component, if it exists, has at most constant size. 
Proposition \ref{thm:extremeultrasmallworld} describes the extreme ultra-small-world nature of the infinite-mean regime, characterizing both the diameter and the typical distance within the giant component of $\mathbb{G}_n(\alpha,\varepsilon)$. Finally, Theorem \ref{thm:maxdegree} characterizes the law of the maximum degree.

The topology of the graph as well, under the relevant scalings ($\varepsilon^\alpha = \lambda/n$ for $\alpha < 1$ and $\varepsilon = \lambda/n$ for $\alpha > 1$), reveals fundamentally different scenarios  as $\alpha$ varies. 
Concerning the giant component, in the finite-variance regime ($\alpha > 2$), the classical Erd\H{o}s-R\'enyi behavior is recovered and the graph exhibits a giant linear component if $\lambda > \mathbb{E}[W^2]^{-1}$ with typical distances of order $\log n$ \cites[Corollary 3.2]{Bollobas2007Phase}[Theorem 6.1]{VanDerHofstad2024RandomV2}. 
As $\alpha$ decreases into the interval $(1, 2)$, a giant linear component is present for each $\lambda$ and the graph enters the ultra-small-world regime, characterized by typical distances of order $\log \log n$ \cites[Corollary 3.3]{Bollobas2007Phase}[Theorem 6.3]{VanDerHofstad2024RandomV2}{van2017scale}. Conversely, in the infinite-mean regime ($\alpha < 1$), we observe a structural collapse of distances, and an extreme-ultra-small-world behavior. In this setting, the typical distance between two vertices chosen uniformly at random in the giant component no longer depends on $n$, but remains bounded by a constant equal to 3. 

Another fundamental distinction between these regimes lies in the concentration of the giant component size. For $\alpha > 1$, $|\mathcal{C}_{max}|/n$ converges in probability to a deterministic value $\zeta \in [0,1)$, representing the survival probability of the associated limiting branching process \cite[Theorem 3.1]{Bollobas2007Phase}. 
Conversely, in the infinite-mean regime ($\alpha < 1$) $\mathbb{G}_n(\alpha,\varepsilon)$ exhibits a novel \emph{lack of self-averaging}, as was already noticed in~\cite{catanzaro2026clustering} for a closely related model. We establish, in Theorem \ref{thm:giantsize}, that under the scaling $\varepsilon^\alpha = \lambda/n$ the ratio $|\mathcal{C}_{max}|/n$ converges in distribution to a non-degenerate random variable. This limiting variable is functionally dependent on the underlying Poisson Point Process of the rescaled weights (see Section \ref{sec:giantsize}), reflecting how the global structure is entirely governed by extreme vertices rather than typical ones.

The paper is organized as follows. The remainder of the introduction is divided into three parts. Section~\ref{sec:modeldescription} describes the model and introduces the relevant notation, Section~\ref{sec:mainresults} states our main results, and Section~\ref{subsec:literature} compares our model and results with the existing literature.
The proofs are presented in the subsequent sections. Theorem~\ref{thm:dust and connectivity threshold} is proved in Section~\ref{connectivity}, 
Theorem~\ref{thm:giantsize} in Section~\ref{sec:giantsize},
Theorem~\ref{thm:nosharp} in Section~\ref{sec:nosharp}, Theorem~\ref{thm:2nd_component} and Proposition~\ref{thm:extremeultrasmallworld} in Section~\ref{sec:2nd_component_distance}, and Theorem~\ref{thm:maxdegree} in Section~\ref{sec:maxdegree}. Finally, Appendix~\ref{appendice1} contains the proof of Theorem~\ref{thm:expecteddegree}, together with several useful auxiliary results.

\subsection{Model definition and Notations}\label{sec:modeldescription}

We consider a class of inhomogeneous Erd\H{o}s--R\'enyi-type random graphs on
\(n\) vertices, with vertex set
\[
V=[n]:=\{1,2,\ldots,n\}.
\]
To each vertex \(i\in[n]\), we assign an independent weight, or fitness
variable, \(W_i\). The random variables \((W_i)_{i\in[n]}\) are i.i.d. with
Pareto distribution of shape parameter \(\alpha>0\) and scale parameter \(1\),
that is,
\begin{equation}\label{paretodistribution}
  \Pb(W>w)=w^{-\alpha}, \qquad w\geq 1.
\end{equation}

In particular,
\[
\Eb[W]=\frac{\alpha}{\alpha-1}, \qquad \alpha>1,
\]
and
\[
\Varb[W]=\frac{\alpha}{(\alpha-2)(\alpha-1)^2}, \qquad \alpha>2,
\]
while the corresponding moments are infinite outside these parameter ranges.

Conditionally on the weights, edges between distinct vertices are included
independently. More precisely, for \(i\neq j\),
\begin{equation}\label{defofp}
p_{ij}
:=
\mathbb{P}\bigl(i\leftrightarrow j \mid W_i,W_j\bigr)
=
\min(\varepsilon W_iW_j,1),
\end{equation}
where \(\varepsilon=\varepsilon_n>0\) is a parameter controlling the overall
edge density. Self-loops and multiple edges are excluded, so the resulting
graph is simple.

We denote this random graph by \(\mathbb{G}_n(\alpha,\varepsilon)\), and write
\(\Pp\) and \(\Ep\) for its law and the corresponding expectation,
respectively. Although these quantities depend on \(n\), we suppress this
dependence from the notation whenever no ambiguity arises.

The structure of $\mathbb{G}_n(\alpha, \varepsilon)$ is determined by the interplay between the decay of $\varepsilon$ and the heavy-tailed nature of the weights, the former makes the graph sparser and sparser, while the latter promotes the emergence of high-degree hubs, especially for small values of $\alpha$. The graph is characterised by two distinct levels of randomness: the stochastic realization of the fitness variables $W_i$, and the subsequent Bernoulli sampling of edges conditioned on those weights. As we shall see, the transition between finite and infinite-mean regimes for the weights ($\alpha=1$) and the finiteness of the variance ($\alpha=2$) represent critical thresholds for the network.

To fully understand the role of the $\alpha$ parameter, we propose the simulations shown in Figures \ref{fig:simulazioninostre1} and \ref{fig:simulazioninostre2}. When the Pareto distribution \eqref{paretodistribution} has infinite mean (i.e.~~when $\alpha\in(0,1]$) the model favors the presence of very heavy vertices, which act as central poles of attraction. However, as soon as $\alpha >1$, where the Pareto distribution has a finite mean, the  model changes behavior. We observe more homogeneity in the weights (we are under the assumptions for the law of large numbers to hold), although some variability in vertex weights remains. When $\alpha>2$ the Pareto distribution has a finite variance and the vertex weights become increasingly concentrated around the mean value $\alpha/(\alpha-1)$ as $\alpha$ grows. Consequently, the variability in weights decreases and the model gets closer to the  Erd\H{o}s--R\'enyi $\mathcal{G}(n,\varepsilon)$, as $\alpha$ increases. The specific values $\alpha=1$ and $\alpha=2$ show some criticality and some more specific features.

\begin{figure}                       
\centering
\includegraphics[scale=0.30]{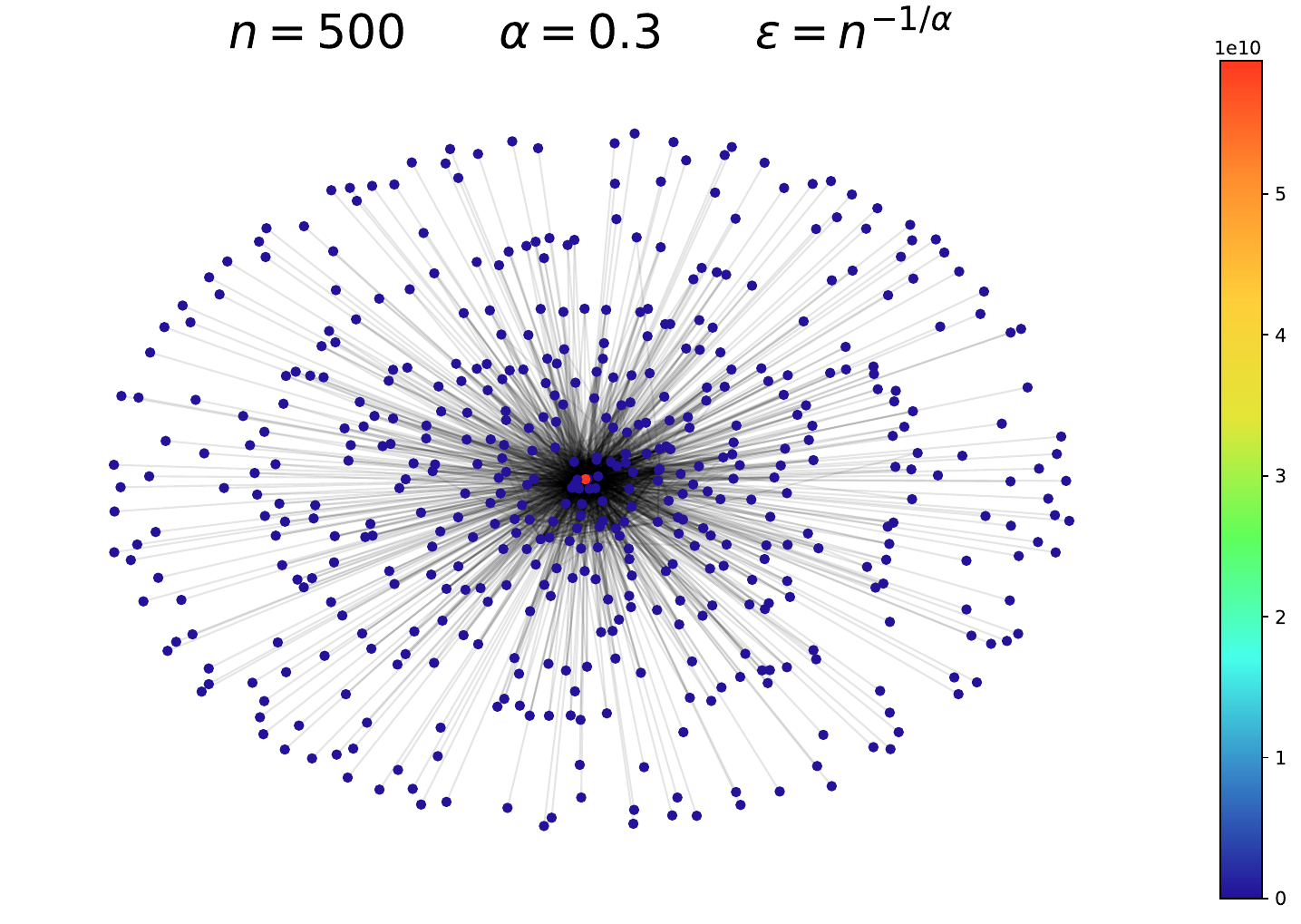}
\includegraphics[scale=0.30]{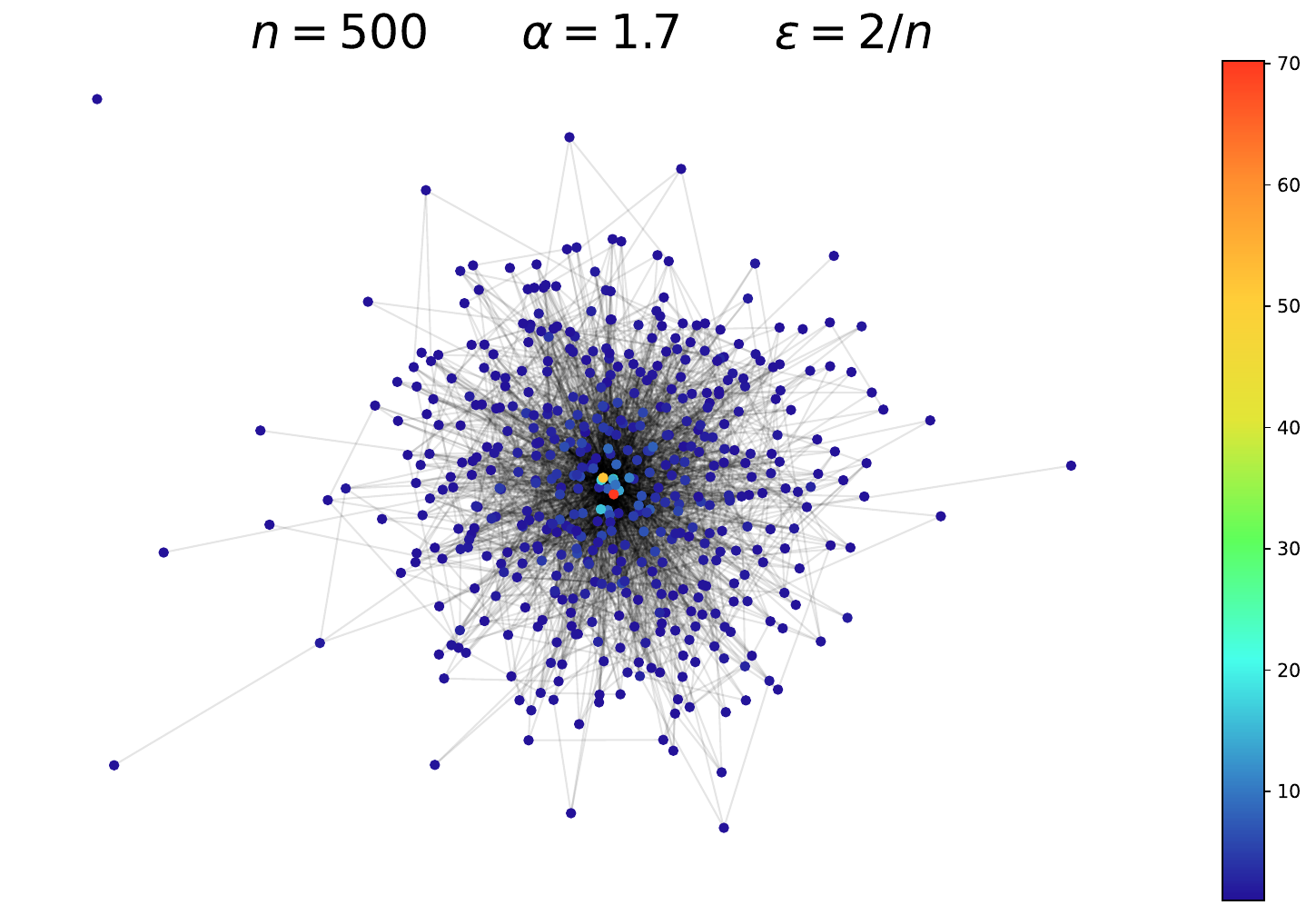}
\caption{In this simulation we work on $n=500$ vertices. We colored the vertices according to their weight. In the first image we used $\alpha=0.3$ and $\varepsilon=n^{-1/\alpha}$. In the second image we used $\alpha=1.7$ and $\varepsilon=2/n$. 
}
\label{fig:simulazioninostre1}
\includegraphics[scale=0.30]{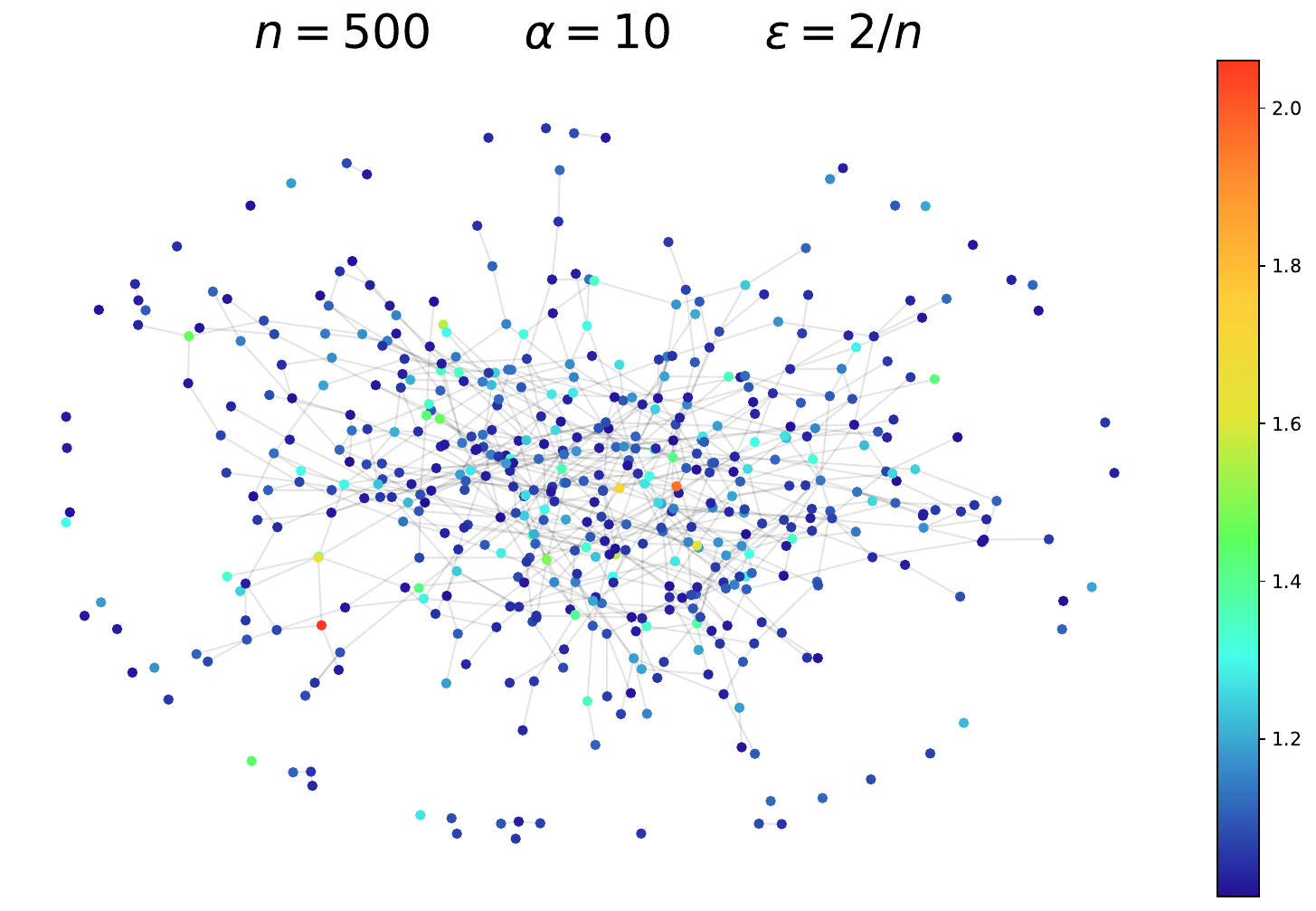}
\includegraphics[scale=0.30]{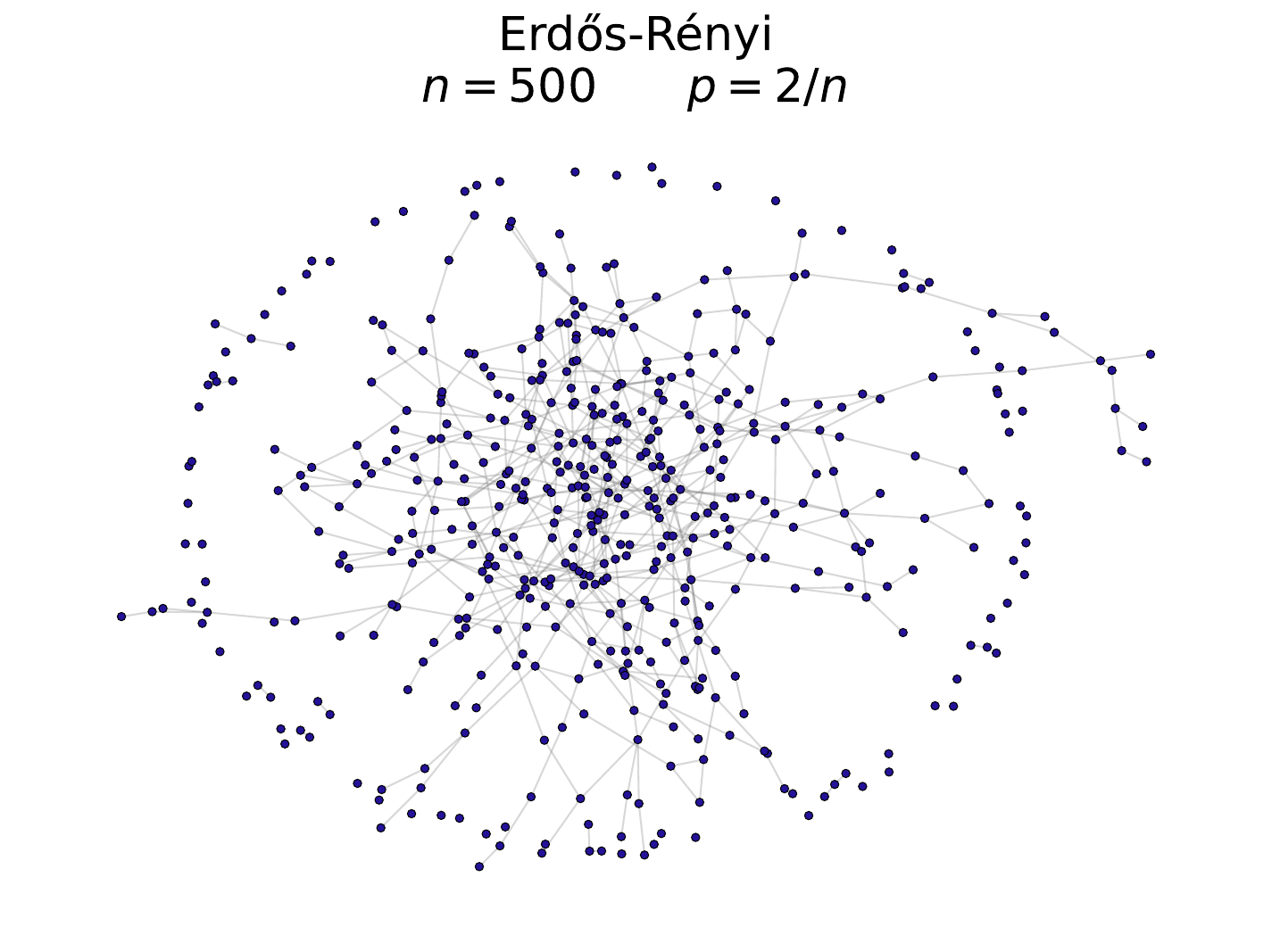}
\caption{We work in both case on $n=500$ vertices. The first is $\mathbb{G}_n(\alpha,\varepsilon)$ with $\alpha=10$ and $\varepsilon=2/n$. We colored the vertices according to their weight. The second is the  Erd\H{o}s--R\'enyi $\mathcal{G}(n,p)$ with $p=2/n$. 
}
\label{fig:simulazioninostre2}
\end{figure}

Regarding notation, whenever we are not referring specifically to the law of the graph \(\Gg_n(\alpha,\eps)\), we use \(\Pb\) and \(\Eb\) to denote a generic probability measure and the corresponding expectation. This convention is used mainly when describing limiting or universal objects. In particular, under \(\Pb\), \(W\sim\mathrm{Pareto}(\alpha)\) always denotes a representative Pareto random variable with shape parameter \(\alpha\), whose dependence on \(\alpha\) is omitted whenever clear from the context.
Convergence in distribution and convergence in probability are denoted by \(\xrightarrow{d}\) and \(\xrightarrow{\P}\), respectively. For a random variable \(X\), conditional expectation given \(X\) is denoted by \(\mathbb{E}_X[\cdot]:=\mathbb{E}[\cdot\mid X]\).
A sequence of events is said to occur \emph{with high probability} (w.h.p.) if its probability converges to \(1\) as \(n\to\infty\).
For \(i,j\in[n]\), we denote by \(\{i\longleftrightarrow j\}\) the event that \(i\) and \(j\) are directly connected by an edge. More generally, for \(A,B\subseteq[n]\), the event \(\{A\longleftrightarrow B\}\) indicates that there exists at least one edge joining a vertex in \(A\) to a vertex in \(B\). The cardinality of a set \(A\) is denoted by \(|A|\).
Finally, for functions \(f\) and \(g\), as \(n\to\infty\), we write \(f(n)\simeq g(n)\) if \(f(n)/g(n)\to1\), \(f(n)=o(g(n))\) if \(f(n)/g(n)\to0\), and \(f(n)=O(g(n))\) if there exists a constant \(C>0\) such that \(|f(n)|\leq C|g(n)|\) for all sufficiently large \(n\). We write \(f(n)=\Theta(g(n))\) if there exist constants \(0<c_1\leq c_2<\infty\) such that
\[
c_1\leq \liminf_{n\to\infty}\frac{f(n)}{g(n)}
\leq \limsup_{n\to\infty}\frac{f(n)}{g(n)}
\leq c_2.
\]

For an interval $I \subseteq (0,\infty]$, we denote by $\mathcal{M}_p(I)$ the space of point measures on $I$ equipped with the vague topology. If $M_n$ and $M$ are random variables in $\mathcal{M}_p(I)$ (i.e., random point measures in $\mathcal{M}_p(I)$), we write $M_n \Rightarrow M$ to denote convergence in distribution with respect to the vague topology.

We denote the degree of vertex $1$ and the number of isolated vertices as 
\begin{equation}\label{eq:degree_i}
D_n(1):=\sum_{j\neq 1} \ind(1 \longleftrightarrow j), \qquad    \mathcal{I}:=\sum_{i}\ind(D_n(i)=0).
\end{equation} 
By exchangeability $D_n(i)$ has the same distribution of $D_n(1)$, for any $i\in [n]$.
Sometimes we use the notation  $a_{ij}:=\ind(i \longleftrightarrow j)$, for $i\neq j$ to denote the  random variable that indicates the presence of the edge $\{i,j\}$. 

In what follows we define some sets of vertices with special features, since they will play a central role in all the paper.

\begin{definition}\label{core}
We define the \textbf{Core} of $\mathbb{G}_n(\alpha,\varepsilon)$ as 

\begin{equation*}
\HH_{\varepsilon}=\{i\in[n]: W_i\geq \varepsilon^{-1}\}.
\end{equation*}
We call \textbf{hub} an element of $\HH_{\varepsilon}$.
\end{definition}

\begin{definition}\label{core_delta}
Let $\delta >0$, we call \textbf{$\delta$--Core} the set of \textbf{weak hubs}, i.e.~
\begin{equation*}
\WW_\delta=\{i \in [n]: W_i > n^{\frac{1-\delta}{\alpha}}\}.
\end{equation*}
\end{definition}

\begin{definition}\label{Ndelta}
We define the \textbf{closed neighborhood} of $\WW_\delta$, denoted by $\widebar{\WW}_\delta$, as the set of vertices that are either part of the $\delta$--Core or directly connected to it:
\begin{equation*}
\widebar{\WW}_\delta := \{ i \in [n] : i \in \WW_\delta \text{ or } \exists j \in \WW_\delta \text{ s.t. } i \leftrightarrow j \}.
\end{equation*}
Equivalently, $\widebar{\WW}_\delta$ is the union of the weak hubs and their first neighbors. 
\end{definition}
\noindent Clearly, it holds $\HH_{\varepsilon} \subseteq \WW_\delta\subseteq \widebar{\WW}_\delta$. 
Connected components of the graph are denoted by $\{\CC_i\}_{i\geq 1}$ and we write $\{\CC_{(i)}\}_{i\geq 1}$ when we order them in decreasing order, breaking ties arbitrarily. $\CC_{(1)}$ is sometimes indicated with $\CC_{\max}$ as well. 
We  denote the diameter of a connected subgraph $S \subseteq \mathbb{G}_n(\alpha, \varepsilon)$ as $\text{diam}(S)$, defined as the maximum graph distance between any pair of vertices in $S$. In particular, we will investigate the diameter of the entire graph, $\text{diam}(\mathbb{G}_n(\alpha, \varepsilon))$, when it is fully connected, and the diameter of the largest component, $\text{diam}(\CC_{\max})$.

\subsection{Main results}\label{sec:mainresults}

Here we state the main results of the paper.  
Theorem \ref{thm:expecteddegree} and Theorem~\ref{thm:dust and connectivity threshold}  identify the relevant regimes of the scaling parameter $\eps$,  depending on $\alpha$ and $n$,  under which convergence of typical degree or, respectively, connectivity occurs. Although some of these results are somehow straightforward consequences, or mild extensions, of known results on the local weak limit of inhomogeneous random graphs (especially in the case of $\alpha>1$), we chose to present the general scaling to give a complete and exhaustive picture of the model. In this respect, see as well Figure~\ref{fig:regimi_connettivita} which illustrates how the relevant scaling regimes changes for different ranges of $\alpha$. In particular Theorem~\ref{thm:expecteddegree} appears to us a mild, yet interesting, extension of known results. For this reason its proof is contained, for the sake of completeness, in the Appendix~\ref{appendice1}.

From Theorem~\ref{thm:nosharp} on, we focus on one scale: the case of weights with infinite mean, \(\alpha<1\). We study in detail the connectivity window, the geometry of the graph in this regime, with particular emphasis on the role of hubs in determining the network's diameter. 

\subsubsection{Cross-parameters picture: how degree and connectivity change when $\alpha$ and $\eps$ vary}\label{subsec:general}
To start, we recall that a random variable $X$ has a mixed Poisson distribution with \emph{mixing distribution} $F$ when, for every $k \in \mathbb{N}_0$,
\begin{equation*}\label{mixPoisson}
\Pb(X=k)=\Eb\biggl[e^{-Z}\frac{Z^k}{k!}	\biggr],
\end{equation*}
where $Z$ is a random variable with distribution function $F$. By abuse of terminology, we say that $X$ has a mixed Poisson distribution with mixing parameter $Z$, and we mean that, conditioning on the event $\{Z=z\}$ for any $z\in\R_+$, $X$ is a Poisson random variable with a known parameter $z$. We have the following result on the degree of a fixed vertex, say vertex $1$ (by exchangeability they all have the same distribution). We denote it with $D_n(1)$, recall its definition in~\eqref{eq:degree_i}.

\begin{theorem}[Expected degree and asymptotic degree distribution]\label{thm:expecteddegree}
Consider the graph $\mathbb{G}_n(\alpha,\varepsilon)$ and let $D_n(1)$ be the degree of vertex $1$. Let $\varepsilon \to 0^+$ as $n\to+\infty$. It holds:
\begin{equation}\label{eq:mean_degree1}
 \mathbb{E}[D_n(1)] \simeq 
\begin{cases}
-\frac{n-1}{1-\alpha}\varepsilon^{\alpha}\ln{\varepsilon^{\alpha}}&\alpha\in(0,1),\\
\frac{n-1}{2}\varepsilon\ln^2{\varepsilon}&\alpha=1,\\
(n-1) \bigl(\frac{\alpha}{\alpha-1}\bigr)^2\varepsilon &\alpha>1.
\end{cases}
\end{equation}

Moreover, for $\lambda \in (0,\infty)$, as $n\to \infty$, the degree $D_n(1)$ converges in distribution to a mixed Poisson random variable $D_\infty$ with mixing parameter $\Lambda$, given by:
\begin{equation*}
\Lambda = 
\begin{cases} 
\frac{\lambda}{1-\alpha}W^\alpha  & \quad\text{if } \alpha \in (0,1) \text{ and } \varepsilon^\alpha = \lambda/n, \\
\lambda W  &\quad\text{if } \alpha = 1 \text{ and } -\varepsilon \ln \varepsilon = \lambda/n, \\
\lambda \frac{\alpha}{\alpha-1} W  &\quad\text{if } \alpha > 1 \text{ and } \varepsilon = \lambda/n,
\end{cases}
\end{equation*}
where $W\sim$ Pareto$(\alpha)$.

\end{theorem}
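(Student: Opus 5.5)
Both parts come from conditioning on the weight of vertex $1$. Given $W_1=w$, the indicators $a_{1j}$, $j\neq 1$, are i.i.d.\ Bernoulli with parameter
\[
p_\varepsilon(w):=\Eb[\min(\varepsilon wW,1)],
\]
where $W\sim$ Pareto$(\alpha)$ is independent of $w$. Hence $D_n(1)\mid\{W_1=w\}\sim\mathrm{Bin}(n-1,p_\varepsilon(w))$. For the mean this gives $\mathbb{E}[D_n(1)]=(n-1)\,\Eb[\min(\varepsilon W_1W_2,1)]$, so everything reduces to one-dimensional asymptotics. By Pareto integration, for $c=\varepsilon w\le 1$,
\[
p_\varepsilon(w)=c\int_1^{1/c}\alpha x^{-\alpha}\,dx+c^{\alpha}.
\]
This equals $\frac{\alpha c - c^{\alpha}}{\alpha-1}+c^{\alpha}$ for $\alpha\neq1$, and $c(1-\ln c)$ for $\alpha=1$. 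For $c\ge1$ it equals $1$.

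\textbf{Expected degree.} I would integrate $p_\varepsilon(w)$ against $\alpha w^{-\alpha-1}dw$, splitting at $w=1/\varepsilon$.
\begin{itemize}
\item For $\alpha>1$, dominated convergence gives $\Eb[\min(\varepsilon W_1W_2,1)]/\varepsilon\to\Eb[W]^2=(\alpha/(\alpha-1))^2$, since $W_1W_2$ is integrable.
\item For $\alpha<1$, the dominant term is $\frac{1}{1-\alpha}(\varepsilon w)^\alpha$ integrated over $1\le w\le 1/\varepsilon$. This gives $\frac{\varepsilon^\alpha}{1-\alpha}\int_1^{1/\varepsilon}\alpha w^{-1}dw=-\frac{\varepsilon^\alpha\ln\varepsilon^\alpha}{1-\alpha}$. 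The remaining pieces are $O(\varepsilon^\alpha)$: the linear term $\frac{\alpha\varepsilon w}{\alpha-1}$ integrates to $O(\varepsilon^\alpha)$, and the region $w>1/\varepsilon$ contributes $\varepsilon^\alpha$.
\item For $\alpha=1$, the integrand is $\varepsilon(1-\ln\varepsilon-\ln w)w^{-1}$ over $[1,1/\varepsilon]$. This yields $\varepsilon\bigl(\ln^2\varepsilon-\tfrac12\ln^2\varepsilon\bigr)+O(\varepsilon|\ln\varepsilon|)=\frac{\varepsilon}{2}\ln^2\varepsilon\,(1+o(1))$.
\end{itemize}

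\textbf{Degree distribution.} I would use the conditional binomial representation together with the Poisson limit of binomials: if $(n-1)p_n\to\mu$ and $p_n\to0$, then $\mathrm{Bin}(n-1,p_n)\xrightarrow{d}\mathrm{Poi}(\mu)$. So it suffices to show, for each fixed $w\ge1$, that $(n-1)p_\varepsilon(w)\to\Lambda(w)$. The mixed Poisson limit then follows by bounded convergence, integrating $\Pp(D_n(1)=k\mid W_1=w)$ against the law of $W_1$. Since $w$ is fixed, $\varepsilon w<1$ eventually.
\begin{itemize}
\item For $\alpha<1$, $p_\varepsilon(w)=\frac{(\varepsilon w)^\alpha}{1-\alpha}(1+o(1))$, because the linear term $\varepsilon w$ is negligible with respect to $(\varepsilon w)^\alpha$. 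With $n\varepsilon^\alpha=\lambda$ this gives $\frac{\lambda}{1-\alpha}w^\alpha$.
\item For $\alpha=1$, $p_\varepsilon(w)=\varepsilon w(1-\ln\varepsilon-\ln w)\simeq-\varepsilon\ln\varepsilon\, w$. With $-n\varepsilon\ln\varepsilon=\lambda$ this gives $\lambda w$.
\item For $\alpha>1$, $p_\varepsilon(w)\simeq\varepsilon w\frac{\alpha}{\alpha-1}$, giving $\lambda\frac{\alpha}{\alpha-1}w$.
\end{itemize}

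\textbf{Main obstacle.} The only genuinely delicate step is the $\alpha\le1$ mean asymptotics. There the truncation at $1/\varepsilon$ produces the logarithm, and I must verify that all boundary and cross terms are of lower order. This is routine once the region is split at $w=1/\varepsilon$. The contrast with the distributional limit is worth noting: the logarithm in the mean comes from atypically large $W_1$, whereas a fixed $W_1$ sees only the $(\varepsilon w)^\alpha$ scale, which is why the mixing parameter has no logarithm.
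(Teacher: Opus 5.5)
Your proposal is correct and follows essentially the paper's route: you condition on $W_1$, compute the Pareto integrals explicitly for the mean, and obtain the mixed Poisson limit from the fixed-$w$ limit $(n-1)p_\varepsilon(w)\to\Lambda(w)$ plus bounded convergence; the paper does this last step with probability generating functions, $\mathbb{E}[t^{D_n(1)}]=\mathbb{E}[(1+(t-1)\mathbb{E}_{W_1}[p_{12}])^{n-1}]$, rather than the binomial-to-Poisson pmf limit. One slip: for $\alpha\neq1$ the closed form should be $p_\varepsilon(w)=\frac{\alpha c-c^{\alpha}}{\alpha-1}$, since the first integral already equals $\frac{\alpha(c-c^{\alpha})}{\alpha-1}$; your later coefficient $\frac{1}{1-\alpha}$ for $\alpha<1$ matches this correct formula, so nothing downstream is affected.
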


Notice that, under these specific choices of $\varepsilon$, the tail of $D_\infty$ satisfies, for large $k$,
\begin{equation*}
\Pb(D_\infty\geq k)\simeq 
\begin{cases}
\frac{\lambda}{1-\alpha}k^{-1} & \text{if } \alpha \in (0,1),\\
\lambda k^{-1} & \text{if } \alpha=1,\\
\lambda^\alpha\bigl(\frac{\alpha}{\alpha-1}\bigr)^\alpha k^{-\alpha} & \text{if } \alpha >1.
\end{cases}
\end{equation*}
These choices of $\varepsilon$ (up to a multiplication by a constant value) are the only ones that allow $D_n(1)$ to converge in distribution to a non-trivial random variable. We can see similar results in \cite{avena2026inhomogeneous}, \cite{Britton2006Generating} and \cite{VanDerHofstad2016RandomV1} for related models both in the $\alpha<1$ and $\alpha>1$ case. 
The theorem outlines a phase transition as soon as the Pareto weights have a finite mean. We see that while for $\alpha \in(0,1)$ the choice of the correct scaling depends on $\alpha$, as soon as $\alpha>1$, $\varepsilon_n$ stabilizes on $n^{-1}$. The case $\alpha=1$ represents a singular situation in which there is a logarithmic correction. We observe that if $\varepsilon_n\ln(\varepsilon_n)=-n^{-1}$, then $\varepsilon_n=\exp(L(-n^{-1}))$ where $L(z)$ is the Lambert function, i.e~the function that satisfies $z=L(z)e^{L(z)}$. In particular $\varepsilon_n=o(n^{-1})$. With these choices of the scaling $\varepsilon_n$, if $\alpha \in (0,1]$ the expected degree diverges. However, as soon as $\alpha>1$ (and thus $\mathbb{E}[W]<\infty$), the expected degree converges to $\lambda\mathbb{E}[W]^2$. The final part of the result specifies that the tails of $D_\infty$ decay polynomially, i.e.~~$\mathbb{G}_n(\alpha,\varepsilon)$ is a scale-free random graph. As it is the most computational and least informative of our proofs, the proof of Theorem \ref{thm:expecteddegree} is deferred to the Appendix, in Section \ref{degreeofavertex}.

Our second main result concerns the connectivity threshold of the graph $\mathbb{G}_n(\alpha, \varepsilon)$. 

\begin{theorem}[Connectivity regimes]\label{thm:dust and connectivity threshold}
Consider the graph $\mathbb{G}_n(\alpha,\varepsilon)$ with $n\to \infty$. We have the following:
\begin{itemize}

\item $\alpha \in (0,1]$
\begin{align*}
&\text{ if }\quad\varepsilon^\alpha n\to0\quad\text{then}\quad
\P(\mathcal{I}\geq 1)\to 1,\\
&\text{ if }\quad\varepsilon^\alpha n\to \infty\quad\text{then}\quad \mathbb{P}(\text{diam}(\mathbb{G}_n(\alpha,\varepsilon))\leq2)\to1.
\end{align*}
\item $\alpha>1$
\begin{align*}
&\text{ if }\quad\frac{\varepsilon n}{\ln{n}}\to0\quad\text{then}\quad
\P(\mathcal{I}\geq 1)\to 1,\\
&\text{ if }\quad\frac{\varepsilon n}{\ln{n}}\to \infty\quad\text{then}\quad 
\mathbb{P}(\mathbb{G}_n(\alpha,\varepsilon) \text{ is connected})\to 1,\\
&\text{ if }\quad\varepsilon^\alpha n\to \infty\quad\text{then}\quad \mathbb{P}(\text{diam}(\mathbb{G}_n(\alpha,\varepsilon))\leq2)\to1.
\end{align*}
\end{itemize}
\end{theorem}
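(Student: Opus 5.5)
The plan is to treat the three kinds of claims separately. Isolated vertices are handled by a first and second moment argument on $\mathcal{I}$. Diameter at most $2$ comes from the Core $\HH_\varepsilon$: every pair of vertices is joined through a hub. Connectivity for $\alpha>1$ is handled by a union bound over small cut sets.

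\textbf{Isolated vertices (both regimes).} Conditioning on $W_1$,
\[
\Pp(D_n(1)=0\mid W_1)=\Eb\bigl[1-\min(\varepsilon W_1W,1)\bigr]^{n-1}.
\]
Since $1-x\ge e^{-x-x^2}$ for small $x$, restricting to $W_1\le K$ gives a lower bound for $\Pp(D_n(1)=0)$ of order
\[
\exp\bigl(-(n-1)\,\Eb[\min(\varepsilon K W,1)](1+o(1))\bigr).
\]
\begin{itemize}
\item For $\alpha<1$: $\Eb[\min(\varepsilon KW,1)]=\Theta_K(\varepsilon^\alpha)$, the truncation at $W\ge(\varepsilon K)^{-1}$ being the dominant contribution. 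Hence if $n\varepsilon^\alpha\to 0$, vertex $1$ is isolated with probability at least $\Pb(W_1\le K)-o(1)$. In fact the same holds for almost every vertex of bounded weight, so $\Ep[\mathcal I]\to\infty$.
\item For $\alpha=1$ there is an extra logarithmic factor. I would treat it separately or note that $n\varepsilon\ln(1/\varepsilon)$ still tends to $0$ in the relevant range; this is mildly delicate.
\item For $\alpha>1$: $\Eb[\min(\varepsilon W,1)]\simeq\varepsilon\,\Eb[W]$. The probability that the vertex of weight near $1$ is isolated is then about $e^{-n\varepsilon\Eb W}=n^{-o(1)}$ when $n\varepsilon/\ln n\to0$, so $\Ep[\mathcal I]\to\infty$.
\end{itemize}
To get $\P(\mathcal I\ge1)\to1$, I would use a second moment on $\mathcal I_K=\#\{i:W_i\le K,\ D_n(i)=0\}$. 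The key point is that for $i\neq j$ the events $\{D_n(i)=0\}$ and $\{D_n(j)=0\}$ depend on shared weights $W_k$. Conditionally on $W_i,W_j\le K$, the joint isolation probability factorizes up to $(1+o(1))$, because $\Eb[\min(\varepsilon KW,1)^2]$ is $o(1/n)$ in both regimes. Paley–Zygmund (or Chebyshev) then concludes. I expect this decorrelation estimate to be the main technical obstacle. The cleanest route is to condition on the weights and use that $\sum_k \min(\varepsilon K W_k,1)$ concentrates, or at least has the correct exponential moment.

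\textbf{Diameter at most $2$ when $\varepsilon^\alpha n\to\infty$.} By definition $|\HH_\varepsilon|\sim\mathrm{Bin}(n,\varepsilon^\alpha)$, so $|\HH_\varepsilon|\to\infty$ in probability. Hubs are pairwise adjacent, since $\varepsilon W_iW_j\ge\varepsilon^{-1}\ge1$. A vertex $v$ with weight $W_v\ge1$ connects to each hub independently with probability at least $\min(\varepsilon W_v\varepsilon^{-1},1)=1$. Thus every vertex is adjacent to every hub, giving diameter at most $2$ as soon as $\HH_\varepsilon\neq\emptyset$. This argument is identical for all $\alpha$, which covers the third bullet for $\alpha>1$ too.

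\textbf{Connectivity for $\alpha>1$ when $n\varepsilon/\ln n\to\infty$.} I would bound the probability that some set $S$ with $1\le|S|\le n/2$ has no edges to $S^c$. Using only $W\ge1$, every pair connects with probability at least $p=\min(\varepsilon,1)$. The graph therefore stochastically dominates the Erdős–Rényi graph $\mathcal G(n,\varepsilon)$. Since $n\varepsilon/\ln n\to\infty$ exceeds the Erdős–Rényi connectivity threshold $\ln n/n$, monotone coupling gives connectivity w.h.p. This step is essentially free once the domination is noted.
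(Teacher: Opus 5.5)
Your diameter argument (every vertex is adjacent to every hub, and $|\HH_\varepsilon|\sim\mathrm{Bin}(n,\varepsilon^\alpha)$) and your Erd\H{o}s--R\'enyi domination for $\alpha>1$ are exactly the paper's (Proposition~\ref{core_bounds}, Corollary~\ref{conalpha}, Lemma~\ref{connectivity1}). The cut-set union bound you announce first is never needed.

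\textbf{Isolated vertices.} This part is correct but argued differently. The paper works with the shrinking window $W_i\le h(n)=1+1/\ln n$ and sandwiches. It bounds $\mathbb{E}[\mathcal{I}_h]$ from below by setting every weight in the window to $h(n)$. It bounds the pair term of $\mathbb{E}[\mathcal{I}_h^2]$ from above by setting both weights to $1$. For $\alpha\ge 1$ the resulting factor $e^{\phi_n}$ tends to $1$ only because the possibly divergent terms $2n\,\mathbb{E}_{h(n)}[p_{12}]$ and $2n\,\mathbb{E}_1[p_{12}]$ nearly cancel. This forces $h(n)\to1$ and requires term-by-term asymptotics; for $\alpha>1$ the leftover is exactly $2\mathbb{E}[W]\varepsilon n/\ln n$.

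You instead compare joint and marginal isolation probabilities at the same values $W_i=w_i$, $W_j=w_j\le K$. The first-order terms then cancel exactly. The ratio is at most $(1+o(1))\exp\bigl((1+o(1))\,n\,\mathbb{E}[\min(\varepsilon KW,1)^2]\bigr)$, uniformly in $w_i,w_j\le K$, and this second moment is indeed $o(1/n)$ in every regime. Your route allows a fixed cutoff and needs the order of a single integral. The paper's packaging gives a closed-form sufficient condition (Lemma~\ref{smallverticesmoment2}) for general increasing kernels, though your argument extends to that setting just as easily.

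For $\alpha<1$ your first estimate already gives $\mathbb{P}(1\text{ isolated})\ge\mathbb{P}(W_1\le K)-o(1)$ for every fixed $K$. Hence $\mathbb{P}(\mathcal{I}\ge1)\ge\mathbb{P}(1\text{ isolated})\to1$ with no second moment at all.

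\textbf{A correction at $\alpha=1$.} It is false that $n\varepsilon\ln(1/\varepsilon)\to0$ whenever $n\varepsilon\to0$. For $\varepsilon=1/(n\ln\ln n)$ it tends to $\infty$, so a fixed vertex of bounded weight is isolated with probability tending to $0$, and the $\alpha<1$ shortcut breaks down.

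What is true: write $c_n=n\varepsilon\to0$. Then $n\varepsilon\ln(1/\varepsilon)=c_n\ln n+c_n\ln(1/c_n)=o(\ln n)$, so $\mathbb{E}[\mathcal{I}_K]\ge n^{1-o(1)}\to\infty$. Moreover $\mathbb{E}[\min(\varepsilon KW,1)^2]\le 2K\varepsilon=o(1/n)$. So $\alpha=1$ should go through your second-moment argument exactly as $\alpha>1$ does, and it works there.
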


This result establishes that the threshold for the almost sure appearance of isolated vertices coincides with the connectivity threshold, mirroring the Erd\H{o}s–R\'enyi model. Nevertheless, the proof reveals that the model behaves quite differently depending on the regime, resulting in distinct scenarios. In the subcritical regime $\alpha \leq 1$, the ``heavy'' vertices play a central role: behaving as large hubs of attraction, they are able to ensure connectivity as soon as $\varepsilon^\alpha n \to \infty$. Moreover, by passing through such vertices, distances in the graph can be substantially shortened: in this regime the diameter is remarkably small (at most $2$). In contrast, in the supercritical regime $\alpha > 1$, it is natural to expect properties sufficiently \textit{near} to those of the Erd\H{o}s--R\'enyi model $\mathcal{G}(n,\varepsilon)$: the graph exhibits connectivity, but can have a larger diameter. We present the proof of Theorem \ref{thm:dust and connectivity threshold} in Section \ref{connectivity}.

\begin{figure}[h] 
\centering
\begin{tikzpicture}[xscale=12, yscale=1.5]

\definecolor{isolated}{rgb}{1.0, 0.75,0.25} 
\definecolor{connected}{rgb}{0.6, 0.8, 1.0} 
\definecolor{diameter2}{rgb}{0.2, 0.4, 0.8} 

\node[anchor=east] at (-0.05, 3.5) {\textbf{$\alpha < 1$}};
\draw[thick, ->] (0,3.5) -- (1.05,3.5) node[right] {$\varepsilon_n$};

\fill[isolated] (0, 3.4) rectangle (0.16, 3.6);
\fill[diameter2] (0.16, 3.4) rectangle (1, 3.6);

\draw[line width=2.5pt, black] (0.16, 3.3) -- (0.16, 3.7) node[above, yshift=2pt] {$\varepsilon^\alpha = \frac{\lambda}{n}$};

\node[anchor=east] at (-0.05, 2) {\textbf{$\alpha = 1$}};
\draw[thick, ->] (0,2) -- (1.05,2) node[right] {$\varepsilon_n$};

\fill[isolated] (0, 1.9) rectangle (0.5, 2.1);
\fill[diameter2] (0.5, 1.9) rectangle (1, 2.1);

\draw[thick, dashed] (0.5, 1.8) -- (0.5, 2.2) node[above, yshift=2pt] {$\varepsilon = \frac{\lambda}{n}$};
\draw[line width=2.5pt, black] (0.32, 1.8) -- (0.32, 2.2) node[above, yshift=2pt] {$-\varepsilon \ln(\varepsilon)= \frac{\lambda}{n}$};

\node[anchor=east] at (-0.05, 0.5) {\textbf{$\alpha > 1$}};
\draw[thick, ->] (0,0.5) -- (1.05,0.5) node[right] {$\varepsilon_n$};

\fill[isolated] (0, 0.4) rectangle (0.67, 0.6);
\fill[connected] (0.67, 0.4) rectangle (0.84, 0.6);
\fill[diameter2] (0.84, 0.4) rectangle (1, 0.6);

\draw[line width=2.5pt, black] (0.5, 0.3) -- (0.5, 0.7) node[above, yshift=2pt] {$\varepsilon = \frac{\lambda}{n}$};
\draw[thick, dashed] (0.67, 0.3) -- (0.67, 0.7) node[above, yshift=2pt] {$\varepsilon = \frac{\lambda\ln n }{ n}$};
\draw[thick, dashed] (0.84, 0.3) -- (0.84, 0.7) node[above, yshift=2pt] {$\varepsilon^\alpha = \frac{\lambda}{n}$};

\begin{scope}[shift={(0,-1)}]
    \fill[isolated] (0,0) rectangle (0.05, 0.2) node[right, black, xshift=5pt, yshift=-5pt] {$\exists$ isolated vertex w.h.p.~[Theorem \ref{thm:dust and connectivity threshold}]};
    \fill[connected] (0, -0.4) rectangle (0.05, -0.2) node[right, black, xshift=5pt, yshift=-5pt] {$\mathbb{G}_n(\alpha,\varepsilon)$ is connected w.h.p.~[Theorem \ref{thm:dust and connectivity threshold}]};
    \fill[diameter2] (0, -0.8) rectangle (0.05, -0.6) node[right, black, xshift=5pt, yshift=-5pt] {$\mathbb{G}_n(\alpha,\varepsilon)$ is connected, $\exists$ hub and diam($\mathbb{G}_n(\alpha,\varepsilon))=2$ w.h.p.~[Theorem \ref{thm:dust and connectivity threshold}]};
    \draw[line width=2pt] (0, -1.1) -- (0.05, -1.1) node[right, black, xshift=5pt] {Degree convergence scaling [Theorem \ref{thm:expecteddegree}]};
\end{scope}
\end{tikzpicture}
\caption{\textbf{Connectivity regimes and diameter transitions for $\mathbb{G}_n(\alpha, \varepsilon)$.}\\ For $\alpha > 1$, the model recovers classical Erd\H{o}s--R\'enyi behavior, exhibiting a more nuanced structural evolution: the preferential scaling ($\varepsilon = \lambda/n$, with $\lambda\in (0,\infty)$) is separated from the connectivity threshold ($\varepsilon =\ln(n)/n$) and the ultra-small-world property is not reached immediately, appearing only at higher densities when extreme weight vertices act as central hubs of attraction, effectively connecting the entire graph. Conversely, for $\alpha < 1$, the scenario changes drastically: the degree scaling ($\varepsilon^\alpha = \lambda/n$, with $\lambda\in (0,\infty)$) coincides with the connectivity threshold, as diameter 2 is triggered as soon as the graph becomes connected. The critical case $\alpha = 1$ acts as a boundary where a logarithmic correction $\varepsilon \ln \varepsilon = -\lambda/n$ is required for degree convergence.}
\label{fig:regimi_connettivita}
\end{figure}

\subsubsection{Infinite mean weights: topology of the graph in the relevant scale}

Our remaining results focus on the infinite mean case, in the relevant scaling of $\varepsilon^\alpha=\lambda/n$. This scaling allows degree convergence (see Theorem \ref{thm:expecteddegree}) and also acts as a threshold for connectivity (see Theorem \ref{thm:dust and connectivity threshold}). 
Theorem \ref{thm:giantsize} shows that the size of the giant component is intrinsically random in the large limit, since it does not concentrate around a deterministic value. The presence of extreme hubs and the high variability of the weights lead to a loss of self-averaging for many macroscopic properties of the graph, including connectivity, the fraction of isolated vertices and the giant component size.

\begin{theorem}[Infinite mean: the asymptotic size of the giant is random]\label{thm:giantsize}
Fix $\lambda \in (0,\infty)$.  Let $\mathbb{G}_n(\alpha,\varepsilon)$ with $\alpha<1$ and $\varepsilon^\alpha=\lambda/n$. Let $\Pi$ be a Poisson Point Process on $(0,\infty]$ with intensity measure $\nu(dt)=\lambda \alpha t^{-(\alpha+1)}dt$ and let $W\sim \text{Pareto}(\alpha)$ be a Pareto random variable independent of $\Pi$. It holds
\begin{equation*}
\frac{|\mathcal{C}_{max}|}{n} \xrightarrow{d} 1-\Xi
\end{equation*}
where
\begin{equation*}
\Xi = \Eb\left[\prod_{z \in \text{supp}(\Pi)} (1 - \min(Wz,1))  \;\middle|\;\Pi \right].
\end{equation*} 
\textcolor{red}{}
\end{theorem}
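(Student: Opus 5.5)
The plan is to show that, w.h.p., $\mathcal{C}_{max}$ coincides up to $o_\P(n)$ vertices with the closed neighbourhood $\widebar{\WW}_\delta$ of the weak hubs, and then to compute $|\widebar{\WW}_\delta|/n$ conditionally on the extreme weights.

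\emph{Step 1: extreme weights.} Set $\varepsilon^{-1}=(n/\lambda)^{1/\alpha}$ and $Z_i:=\varepsilon W_i$. The point process $\Pi_n:=\sum_i \delta_{Z_i}$ has intensity $n\Pb(\varepsilon W>t)=\lambda t^{-\alpha}$, which is exactly $\nu((t,\infty])$. Hence $\Pi_n$ restricted to $(\eta,\infty]$ converges to $\Pi$, jointly for all $\eta>0$ (classical extreme-value theory). Write $\Pi_n^\eta$ for the restriction of $\Pi_n$ to $(\eta,\infty]$. By Skorokhod we can realise $\Pi_n^\eta\to\Pi^\eta$ almost surely, with finitely many atoms. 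The weak hubs satisfy $\varepsilon W_i\ge n^{-\delta/\alpha}\lambda^{1/\alpha}$, so $\WW_\delta$ contains these top points together with an intermediate layer.

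\emph{Step 2: the non-neighbourhood fraction.} Condition on the hub set $A_\eta=\{i:Z_i>\eta\}$. The vertices $j\notin A_\eta$ have conditionally i.i.d. weights, namely Pareto conditioned on $\varepsilon W\le\eta$, which is asymptotically Pareto. Given the weights, the indicators $\ind(j\not\leftrightarrow A_\eta)$ are independent with mean
\[\prod_{i\in A_\eta}(1-\min(W_j Z_i,1)).\]
Two applications of the law of large numbers (or a second-moment computation, which is conditionally easy because these indicators are independent across $j$) give
\[\frac{1}{n}\#\{j: j\not\leftrightarrow A_\eta\}\to \Xi_\eta:=\Eb\Bigl[\prod_{z\in\Pi^\eta}(1-\min(Wz,1))\Bigm|\Pi\Bigr].\]
Letting $\eta\to0$ gives $\Xi_\eta\downarrow\Xi$ by monotone convergence. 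To exchange limits I need a bound on the contribution of the intermediate points $Z_i\in(n^{-\delta/\alpha},\eta]$. A typical vertex $j$ connects to one of them with probability at most $\Eb[W]$-type sums $\approx W_j\sum_{i}\varepsilon W_i\ind(Z_i\le\eta)$, whose expectation is $O(W_j\lambda\eta^{1-\alpha})$ since $\alpha<1$. This tends to $0$ uniformly in $n$ as $\eta\to0$, after truncating $W_j$ at a large constant $M$ (the vertices with $W_j>M$ form a fraction $M^{-\alpha}$). Consequently $|\widebar{\WW}_\delta|/n$ and $|N(A_\eta)\cup A_\eta|/n$ both converge to $1-\Xi$ in distribution, jointly with $\Pi$.

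\emph{Step 3: the neighbourhood is a single component.} I will show that w.h.p. $A_\eta$ (for fixed $\eta$), and in fact $\WW_\delta$, lies in one connected component. Two hubs with $Z_i,Z_k>\eta$ share a common neighbour with probability $1-\prod_j(1-\min(\varepsilon W_iW_j,1)\min(\varepsilon W_kW_j,1))$. Taking $j$ over the $\asymp n$ vertices with bounded weight, the exponent in this product is of order $n\,\eta^2\varepsilon^2\varepsilon^{-2}\cdot n^{-1}\cdots$. The honest computation gives $\sum_j Z_iZ_kW_j^2\approx Z_iZ_k\, n\,\varepsilon^{-2}\cdot\varepsilon^{2}\Eb[W^2\wedge \varepsilon^{-1}]$, which diverges because $\Eb[W^2\ind(W\le\varepsilon^{-1}/Z)]\asymp \varepsilon^{\alpha-2}$ and $n\varepsilon^{\alpha}=\lambda$ gives an order-$\lambda$ quantity times $Z^{2-\alpha}$-type factors. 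This is the main obstacle: for small $\eta$ the expected number of common neighbours is only of constant order, so connectivity of $A_\eta$ is not automatic. I would therefore first connect the top point (the largest $Z$, which exceeds $1$ with probability $1-e^{-\lambda}$; otherwise pass through a short chain) and show each hub with $Z>\eta$ has $\Theta(n)$ neighbours. Two sets of linear size with independent edge probabilities at least $\varepsilon^2$-scale and positive density are then joined w.h.p. by a path of length $\le 3$ through medium-weight vertices, since the number of available connectors is $n^{c}\to\infty$. Hence $|\mathcal{C}_{max}|\ge |N(A_\eta)\cup A_\eta|$ w.h.p.

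\emph{Step 4: upper bound.} The remaining vertices, those outside $\widebar{\WW}_\delta$, have weights at most $n^{(1-\delta)/\alpha}$. A first-moment count of paths of length $k$ from a vertex not adjacent to any hub shows that the components of the graph restricted to these vertices have total mass in components of size $\ge K$ that is at most $o(n)+f(K)n$ with $f(K)\to0$. This should follow from a subcritical-type estimate on the truncated kernel. Combining the lower and upper bounds gives $|\mathcal{C}_{max}|/n\xrightarrow{d}1-\Xi$.
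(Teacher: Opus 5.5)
Your Steps 1--2 are sound, and they take a slightly different route to $|\widebar{\WW}_\delta|/n\xrightarrow{d}1-\Xi$ than the paper. Truncating the hub set at level $\eta$ lets you work with the finite product $\prod_{z\in\mu,\,z>\eta}(1-\min(wz,1))$. This is vaguely continuous at every $\mu$ with $\mu(\{\eta\})=0$, so you avoid the logarithmic singularity at $z=w^{-1}$ that forces the paper to use the smoothed functionals $F_\eta$ (Lemmas \ref{B0}--\ref{B2}). Your truncation $W_j\le M$ plays the role of their bound on $R_{w,\eta,n}$.

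Step 3, however, rests on a miscalculation, and the ``main obstacle'' you describe does not exist. For $i,k\in A_\eta$ one has $\varepsilon W_iW_k=Z_iZ_k/\varepsilon\ge\eta^2/\varepsilon\to\infty$. More generally, for $i,k\in\WW_\delta$ one has $\varepsilon W_iW_k\ge\lambda^{1/\alpha}n^{(1-2\delta)/\alpha}\ge 1$ when $\delta<1/2$ and $n$ is large. So $\WW_\delta$ is a clique with conditional probability one for large $n$ (Lemma \ref{coredeltaexists}), and no common neighbours or chains through medium-weight vertices are needed. Your claim that two hubs have only $O(1)$ common neighbours is also false: each $j$ is adjacent to a hub with $Z_i>\eta$ with probability $\min(Z_iW_j,1)\ge\min(\eta,1)$, so there are linearly many.

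The genuine gap is Step 4. It is unproved (``should follow from a subcritical-type estimate''), and more importantly it targets the wrong quantity. Bounding the mass of components of size $\ge K$ of the graph induced on $[n]\setminus\widebar{\WW}_\delta$ does not bound $|\Cmax\setminus\widebar{\WW}_\delta|$. The latter is the union of induced components \emph{of any size} that send an edge into $\widebar{\WW}_\delta\setminus\WW_\delta$. The small ones (even single vertices adjacent to a neighbour of a hub) are untouched by an $o(n)+f(K)n$ estimate, and a priori they could add a linear number of vertices to $\Cmax$. So even granting your estimate, $|\Cmax|\le|\widebar{\WW}_\delta|+o_{\mathbb{P}}(n)$ does not follow.

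What you need is that the number of \emph{non-isolated} vertices outside $\widebar{\WW}_\delta$ is $o_{\mathbb{P}}(n)$. This is a one-step first-moment bound:
\begin{itemize}
\item Such a vertex must have a neighbour of weight at most $n^{(1-\delta)/\alpha}$.
\item For a vertex of weight $w$, the expected number of such neighbours is at most $n\varepsilon w\,\Eb[W\ind(W\le n^{(1-\delta)/\alpha})]\le\frac{\alpha}{1-\alpha}\lambda^{1/\alpha}w\,n^{-\delta(1-\alpha)/\alpha}\to 0$.
\item By dominated convergence in $w$, the expected number of such vertices is $o(n)$, and Markov's inequality gives the bound.
\end{itemize}
No path counting or component-size estimate is needed. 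This is exactly the content of Lemma \ref{isolatedornot}: $\mathbb{P}(1\text{ is isolated})\to\xi_\lambda$ and $\mathbb{P}(1\in\widebar{\WW}_\delta)\to1-\xi_\lambda$, so a typical vertex is w.h.p.\ either isolated or in $\widebar{\WW}_\delta$. Corollary \ref{isolatedornot2} then yields $\widebar{\WW}_\delta\subseteq\Cmax$ and $|\Cmax\setminus\widebar{\WW}_\delta|/n\xrightarrow{\mathbb{P}}0$. Replace your Step 3 with the clique property and your Step 4 with this estimate, and your Steps 1--2 complete the proof.
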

With a straightforward computation one sees that $\Eb[\Xi]=\frac{\lambda}{1-\alpha} \Gamma\left(-1, \frac{\lambda}{1-\alpha}\right)$.

Theorem \ref{thm:giantsize} highlights the lack of concentration of the giant component size in the infinite-mean regime. We now give an interpretation of this phenomenon, based on the strategy of the proof. The limit $1-\Xi$ is an intrinsically random quantity and it depends on the order statistics of the Pareto weights.
More precisely, \(\Xi\) can be interpreted as the asymptotic probability that a typical vertex, with Pareto weight \(W\), is not connected to any of the vertices with large weights, namely those with weight on the same scale of the weight of the hubs, that is $\asymp \eps^{-1}$. The extremal weights of these vertices are described, in the limit, by the Poisson Point Process (PPP) \(\Pi\) and are asymptotically independent of the weight of any fixed vertex with finite weight. From this perspective, the giant component may be viewed as consisting of the vertices with weight $\asymp \eps^{-1}$, together with their immediate neighbors.
Note that, in this framework, $\lambda$ is related to the asymptotic expected number of hubs, being indeed a parameter of  the intensity of $\Pi$, namely $\nu(dt)=\lambda \alpha t^{-(\alpha+1)} dt$. The greater the number of hubs, the larger the giant component.
We prove Theorem \ref{thm:giantsize} in Section \ref{sec:giantsize}.

As a consequence of Theorem \ref{thm:giantsize}, Theorem \ref{thm:nosharp} will continue in the infinite-mean case the analysis of Theorem \ref{thm:dust and connectivity threshold} for the connectivity threshold. This contrasts with classical results, where a single 
$\lambda_*$
 produces a sharp threshold (see \cite{Devroye2014Connectivity}): here, for each value of $\lambda$, the event that the graph is connected occurs with asymptotically probability strictly far from $0$ and $1$. In particular, as the proof shows, the graph is connected exactly when a hub appears. 

\begin{theorem}[Infinite mean: no sharp connectivity threshold]\label{thm:nosharp} Fix $\lambda \in (0,\infty)$. 
Let $\mathbb{G}_n(\alpha,\varepsilon)$ with $\alpha<1$ and $\varepsilon^\alpha=\lambda/n$. Then, the following holds:
\begin{equation*}
    \mathbb{P}(\mathbb{G}_n(\alpha,\varepsilon)\text{ is connected})\to1-e^{-\lambda}.
\end{equation*}
\end{theorem}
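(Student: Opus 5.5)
The plan is to show that, up to events of vanishing probability, connectivity coincides with the event $\{\HH_\varepsilon\neq\emptyset\}$. We first compute the probability of this event directly. Since $\Pb(W\geq \varepsilon^{-1})=\varepsilon^{\alpha}=\lambda/n$, the number of hubs $|\HH_\varepsilon|$ is Binomial$(n,\lambda/n)$. It therefore converges to a Poisson$(\lambda)$ variable, and
\[
\P(\HH_\varepsilon\neq\emptyset)=1-(1-\lambda/n)^n\to 1-e^{-\lambda}.
\]
It then suffices to prove two things: (a) $\P(\text{connected},\ \HH_\varepsilon=\emptyset)\to0$, and (b) $\P(\text{not connected},\ \HH_\varepsilon\neq\emptyset)\to 0$.

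For (a), we show that when there are no hubs, some vertex is isolated with high probability, conditionally on $\HH_\varepsilon=\emptyset$. Conditioning on no hubs makes the weights i.i.d.\ Pareto truncated at $\varepsilon^{-1}$. The conditioning event has probability bounded away from $0$, so all estimates transfer with at most a constant factor loss. We use a first and second moment argument on the number of isolated vertices $\mathcal{I}$, restricted to vertices of bounded weight, say $W_i\leq 2$. For such a vertex, the expected degree given $W_i$ is at most
\[
2(n-1)\varepsilon\,\Eb[W\ind(W<\varepsilon^{-1})]\asymp n\varepsilon\cdot\varepsilon^{\alpha-1}=\lambda,
\]
which is bounded. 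By Theorem \ref{thm:expecteddegree} (the mixing parameter $\Lambda$ is finite), $\P(D_n(i)=0\mid \HH_\varepsilon=\emptyset, W_i\leq 2)\geq c>0$. Hence $\Ep[\mathcal{I}]\geq c' n\to\infty$. For the second moment, two distinct vertices are isolated almost independently: the only shared edge is $\{i,j\}$, which has probability $O(\varepsilon)$, and the neighbour sets are driven by disjoint sets of independent weights. This gives $\mathrm{Var}(\mathcal{I})=o(\Ep[\mathcal{I}]^2)$, and Chebyshev's inequality yields $\P(\mathcal{I}\geq1\mid\HH_\varepsilon=\emptyset)\to1$. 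An alternative to the second moment is to reuse the argument from the $\varepsilon^\alpha n\to0$ case of Theorem \ref{thm:dust and connectivity threshold}, adapted to the truncated weights.

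For (b), suppose a hub $h$ exists, so $W_h\geq\varepsilon^{-1}$. Then for every $j$ we have $\varepsilon W_hW_j\geq W_j\geq1$, so $p_{hj}=1$: the hub is adjacent to every other vertex. The graph is therefore deterministically connected, indeed with diameter at most $2$, on $\{\HH_\varepsilon\neq\emptyset\}$. So (b) holds with probability exactly $0$.

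Combining (a) and (b) with the Binomial computation gives the limit $1-e^{-\lambda}$. The main obstacle is step (a), specifically the decorrelation in the second moment under the truncation conditioning. The weights of moderately large vertices, say near $\varepsilon^{-1}$ but below it, could create correlations by connecting to many low-weight vertices simultaneously. We would control this by conditioning on the weights and noting that, conditionally, the events $\{D_n(i)=0\}$ are positively correlated only through the product $\prod_k(1-p_{ik})(1-p_{jk})$. Alternatively, we would show that the sum $\sum_k \varepsilon W_k\ind(W_k<\varepsilon^{-1})$ concentrates at its Poisson-point-process limit. This gives a conditional isolation probability of the form $\Eb\bigl[\prod_{z\in\Pi}(1-\min(Wz,1))\mid\Pi\bigr]$, and on $\{\Pi\cap[1,\infty]=\emptyset\}$ this quantity is strictly positive. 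That is consistent with Theorem \ref{thm:giantsize}: $\Xi>0$ exactly when there is no hub.
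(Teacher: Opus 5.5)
Your reduction to (a) and (b) is sound, and (b) is exactly the paper's lower bound via Proposition~\ref{core_bounds}. The gap is in (a). The claim $\mathrm{Var}(\mathcal{I})=o(\Ep[\mathcal{I}]^2)$ is false at the scaling $\varepsilon^\alpha n=\lambda$, also conditionally on $\HH_\varepsilon=\emptyset$, and the heuristic behind it is wrong. The isolation events of $i$ and $j$ are not driven by disjoint sets of weights: both depend on the same $W_k$. In particular they depend on the $\Theta(1)$ vertices with $\varepsilon W_k\in(\eta,1)$, each of which changes the isolation probability of every low-weight vertex by a factor of order one.

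Quantitatively, take $W_i=W_j=1$ and truncated weights. Then $\Ep[p_{ik}p_{jk}]=\varepsilon^2\,\Ep[W^2\mid W<\varepsilon^{-1}]\simeq\frac{\alpha}{2-\alpha}\varepsilon^\alpha$, so $n\,\Ep[p_{ik}p_{jk}]\to\frac{\alpha\lambda}{2-\alpha}>0$. Writing $\mathcal{I}_2$ for the number of isolated vertices of weight at most $2$, the ratio $\Ep[\mathcal{I}_2^2]/\Ep[\mathcal{I}_2]^2$ therefore stays bounded away from $1$. The second moment method can then give at best $\P(\mathcal{I}\geq1\mid\HH_\varepsilon=\emptyset)\geq c>0$, not convergence to $1$. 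This is the lack of self-averaging of Theorem~\ref{thm:giantsize}: $(n-|\Cmax|)/n$, which is essentially $\mathcal{I}/n$, has the non-degenerate limit $\Xi$. It remains random on the no-hub event, since it still depends on the atoms of $\Pi$ in $(0,1)$. Your fallback of reusing Lemma~\ref{dust} fails for the same reason. Its criterion $\phi_n\to0$ contains the term $n\,\Ep_1[p_{12}^2]\asymp\varepsilon^\alpha n$ (Table~\ref{table:asymptotics_phi_detailed}), which is why that lemma needs $\varepsilon^\alpha n\to0$.

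Your last idea is the right fix, but its core is the step you only assert. Conditionally on $\mathbf{W}$, the isolation indicators have covariances at most $p_{ij}$. So $\mathcal{I}$ does concentrate around the random quantity $\sum_i\prod_{k\neq i}(1-p_{ik})$, whose normalized version behaves like $\Xi$. What remains is to prove $\Xi>0$ almost surely on $\{\Pi[1,\infty]=0\}$ and to transfer this back to finite $n$.

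The paper does this without any second-moment computation. Theorem~\ref{thm:giantsize} and Portmanteau applied to the closed set $\{0\}$ give $\limsup_n\P(\text{connected})\leq\Pb(\Xi=0)$. For positivity on the no-hub event, let $M<1$ be the largest atom and $A=\sum_z z$, which is finite a.s.\ since $\int_0^1 z\,\nu(dz)<\infty$. Then $\prod_z(1-uz)\geq e^{-uA/(1-uM)}$ for $u\in[1,u_0]$ with $u_0<1/M$, and integrating over this range of $W$ gives $\Xi>0$. Hence $\Pb(\Xi=0)=\Pb(\Pi[1,\infty]>0)=1-e^{-\lambda}$. Together with (b), this yields your (a) as a corollary rather than requiring it as a separate estimate.
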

The proof of Theorem \ref{thm:nosharp}  is in Section \ref{sec:nosharp}.

The following result complements Theorem \ref{thm:giantsize} by describing the size of the second-largest component of \(\mathbb{G}_n(\alpha,\varepsilon)\) in the infinite-mean regime, when \(\varepsilon^\alpha=\lambda/n\).

\begin{theorem}[Infinite mean: second largest component size]\label{thm:2nd_component} Fix $\lambda \in (0,\infty)$.
Let $\mathbb{G}_n(\alpha,\varepsilon)$ with $\alpha<1$ and $\varepsilon^\alpha=\lambda/n$. Let $k>(1-\alpha)^{-1}$. Then, as $n\to\infty$, it holds
\begin{equation*}
\mathbb{P}(|\mathcal{C}_{(2)}|\geq k)\to 0.
\end{equation*}
\end{theorem}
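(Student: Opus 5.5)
We bound the second component by showing that, with high probability, every component of size at least $k$ meets the $\delta$--Core, and that the weak hubs all lie in a single component. Then any component of size $\geq k$ is $\mathcal{C}_{\max}$, so $|\mathcal{C}_{(2)}|<k$.

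\textbf{Step 1: weak hubs are connected to each other.} Fix a small $\delta>0$. Since $\varepsilon=(\lambda/n)^{1/\alpha}$, any two $i,j\in\WW_\delta$ satisfy
\[
\varepsilon W_iW_j\ge \lambda^{1/\alpha} n^{(1-2\delta)/\alpha},
\]
which diverges for $\delta<1/2$. Moreover $|\WW_\delta|$ is Binomial$(n,n^{-(1-\delta)})$, hence of order $n^{\delta}$ w.h.p. So all pairs inside $\WW_\delta$ are adjacent (the probability is $1$ for large $n$), and $\widebar{\WW}_\delta$ is contained in a single component whenever $\WW_\delta\neq\emptyset$, which holds w.h.p. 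This component has size $\Theta(n)$ with positive probability; by Theorem~\ref{thm:giantsize} it is the largest one w.h.p., since any other component avoids $\widebar{\WW}_\delta$.

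\textbf{Step 2: a first-moment bound on trees avoiding the core.} It remains to show that, w.h.p., no connected set of $k$ vertices lies entirely outside $\widebar{\WW}_\delta$. A component of size $\geq k$ contains a subtree on exactly $k$ vertices, so we union bound over vertex sets $S$ with $|S|=k$ and labelled spanning trees $T$ on $S$. For each such pair we bound the probability that all edges of $T$ are present and that no vertex of $S$ is in $\WW_\delta$ or adjacent to $\WW_\delta$. I would drop the adjacency constraint, keep only $W_i\le n^{(1-\delta)/\alpha}$ for $i\in S$, and use independence of the weights. The expected count is then at most
\[
n^k\sum_T \Eb\Big[\prod_{\{i,j\}\in T}\min(\varepsilon W_iW_j,1)\,\prod_{i\in S}\ind\big(W_i\le n^{(1-\delta)/\alpha}\big)\Big].
\]
Bounding $\min(\varepsilon W_iW_j,1)\le \varepsilon W_iW_j$ factorizes the product as $\varepsilon^{k-1}\prod_i \Eb[W^{d_i}\ind(W\le M)]$, where $d_i$ is the degree of $i$ in $T$ and $M=n^{(1-\delta)/\alpha}$. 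For $d\geq 1$ we have $\Eb[W^d\ind(W\le M)]\lesssim M^{d-\alpha}$ (up to a logarithm when $d=1$ and $\alpha$ is close to $1$), while leaves contribute $O(1)$ if $\alpha>1$ or $M^{1-\alpha}$ if $\alpha<1$.

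\textbf{Step 3: the main obstacle.} The crude bound $M^{d_i-\alpha}$ at every vertex gives, using $\sum_i d_i=2(k-1)$,
\[
n^k\varepsilon^{k-1}M^{2(k-1)-k\alpha}\approx n^{k}\,n^{-(k-1)/\alpha}\,n^{(1-\delta)(2k-2-k\alpha)/\alpha},
\]
which may not be small. So the truncation must be refined by using $\min(\cdot,1)$ properly: high-degree vertices in $T$ with large weight saturate to probability $1$. This is where the condition $k>(1-\alpha)^{-1}$ should enter.

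I would organize the refinement via the star case, which is likely extremal: a center of weight $w$ with $k-1$ leaves. Each leaf, of Pareto weight, attaches with probability about $\Eb[\min(\varepsilon wW,1)]\asymp(\varepsilon w)^{\alpha}\asymp \lambda w^{\alpha}/n$. The expected number of such stars is therefore
\[
n^{k}\,\Eb\big[(w^{\alpha}/n)^{k-1}\,\ind(W\le M)\big]\asymp n\,\Eb\big[W^{\alpha(k-1)}\ind(W\le M)\big]\asymp n\,M^{\alpha(k-2)}\quad\text{when }k>2,
\]
which grows, so stars around moderately heavy vertices are not excluded by this count alone. The exclusion must come from the constraint that these vertices are not adjacent to $\WW_\delta$, together with Step 1. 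A vertex of weight $w$ misses the core with probability about $\exp(-c\,w^{\alpha}\cdot\text{const})$ coming from $\Pi$; in fact it misses all hubs with probability about $\exp(-c\lambda w^{\alpha})$ on the event that hubs exist.

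I therefore expect the real argument to condition on the extreme weights and to retain the factor $\exp(-c w^{\alpha}\,|\text{hub mass}|)$ for every vertex of $S$. Combined with the leaf computation, this turns the sum into something like $n^{1-(k-1)(1-\alpha)}$ times a finite moment. The exponent is negative exactly when $k>(1-\alpha)^{-1}$. Making this heuristic rigorous is the hard step: the exponential penalty must control the growing weight moments, and the empty-core event (of probability $e^{-\lambda}$) needs a separate treatment via $\WW_\delta$.
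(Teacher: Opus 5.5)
Your architecture matches the paper's: weak hubs form a clique, a first moment over $k$-sets carrying a spanning tree, and Cayley's formula. But the proposal stops at the one estimate that makes the argument work, so there is a genuine gap.

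\textbf{Where it goes wrong.} The gap starts in Step~2. By dropping the requirement that $S$ is not adjacent to $\WW_\delta$, and keeping only $W_i\le M=n^{(1-\delta)/\alpha}$, you discard the only source of decay. Your crude bound in fact diverges: its exponent $\alpha^{-1}\bigl(k-1-\delta(2k-2-k\alpha)\bigr)$ exceeds $k/2$ for every $\delta<1/2$. This is not an artifact of crudeness. Vertices of weight close to $M$ exist w.h.p.\ and have degree of order $n^{1-\delta}$, mostly to vertices of weight $\le M$, so connected $k$-sets inside $\{W_i\le M\}$ are abundant.

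Step~3 then offers only a heuristic, and two parts of it are off.
\begin{itemize}
\item In the star count, the attachment probability $\Eb[\min(\varepsilon wW,1)]\asymp(\varepsilon w)^\alpha$ comes essentially from leaves of weight $\gtrsim(\varepsilon w)^{-1}$. These exceed $M$ whenever $w\ll(\varepsilon M)^{-1}=\lambda^{-1/\alpha}n^{\delta/\alpha}$, so they are vertices of $\WW_\delta$ that your event excludes. The growth you find is an artifact of not restricting the leaves.
\item The predicted rate $n^{1-(k-1)(1-\alpha)}$ is wrong. The correct exponent is $1-(k-1)(1-\alpha)/\alpha=k-(k-1)/\alpha$. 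Yours changes sign at $k=(2-\alpha)/(1-\alpha)$ rather than at $(1-\alpha)^{-1}$.
\end{itemize}

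\textbf{The missing step.} It is short. It needs no conditioning on $\Pi$, no per-vertex penalties, and no separate treatment of $\{\HH_\varepsilon=\emptyset\}$.

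Keep the non-adjacency constraint only for the heaviest vertex $\tilde i$ of $S$, with $\tilde W=\max_{i\in S}W_i$. Given $(W_i)_{i\in S}$, the tree edges inside $S$ are independent of the edges from $\tilde i$ to $[n]\setminus S$ and of the outside weights. The annealed computation over the $n-k$ independent outside vertices gives
\[
\Pp\bigl(\tilde i\notin\widebar{\WW}_\delta\mid (W_i)_{i\in S}\bigr)\le \ind(\tilde W\le M)\bigl(1-q(\tilde W)\bigr)^{n-k}\le \ind(\tilde W\le M)\,e^{-\frac12\min(\lambda\tilde W^\alpha,\,n^\delta)}\qquad(n\ge 2k).
\]
Here
\[
q(w):=\Eb[\min(\varepsilon wW,1)\ind(W>M)]\ge\min\bigl((\varepsilon w)^\alpha,M^{-\alpha}\bigr)=\frac{\min(\lambda w^\alpha,n^\delta)}{n}.
\]
The lower bound uses only outside vertices with $W_j\ge\max(M,(\varepsilon w)^{-1})$, which are joined to $\tilde i$ with probability one.

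Now bound every tree edge crudely by $\varepsilon\tilde W^2$. This yields
\[
\Ep[X_k]\le n^k k^{k-2}\varepsilon^{k-1}\Bigl(\Eb\bigl[\tilde W^{2(k-1)}e^{-\lambda\tilde W^\alpha/2}\bigr]+M^{2(k-1)}e^{-n^\delta/2}\Bigr)\le C\,n^{k-\frac{k-1}{\alpha}},
\]
which tends to $0$ precisely when $k>(1-\alpha)^{-1}$.

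The exponential penalty on the single heaviest vertex makes every polynomial moment of $\tilde W$ finite. So the tension you anticipated, between the penalty and the growing weight moments, never arises. Combined with your Step~1, this completes the proof.
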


Then, the second largest component, if it exists, has constant size. In particular, w.h.p.\ there are no components with $k$ vertices, for each natural $k>(1-\alpha)^{-1}$.

Let us briefly comment on this result. While Theorem \ref{thm:giantsize} shows that the system is dominated by a component of linear size, Theorem~\ref{thm:2nd_component} establishes that all remaining vertices, if any, form components of size $O(1)$. The structure of this “dust” is governed by \(\alpha\): only components of size \(k\leq (1-\alpha)^{-1}\) can survive. In particular, when \(\alpha<1/2\), the graph consists asymptotically of the giant component and isolated vertices. As \(\alpha\) approaches the finite-mean boundary at \(1\), increasingly large finite-size components may occur. We prove Theorem \ref{thm:2nd_component} in Section \ref{sec:2nd_component}.

The following short proposition, which we prove in Section \ref{sec:extremeultrasmallworld}, describes the extreme-ultra-small world nature of the infinite-mean regime. In particular, it proves that the diameter of the giant component is bounded by a constant and that the typical distance in $\Cmax$ is at most $3$, w.h.p..

\begin{prop}[Extreme ultra-small-world: diameter and typical distance in the giant component]\label{thm:extremeultrasmallworld}
Fix $\lambda \in (0,\infty)$. Let $\mathbb{G}_n(\alpha, \varepsilon)$ with $\alpha < 1$ and $\varepsilon^\alpha = \lambda/n$. Let $K: = \lfloor (1-\alpha)^{-1} \rfloor$. As $n \to \infty$, 
\begin{equation*}
\mathbb{P}(\text{diam}(\Cmax) \le 2K + 3)\to 1.
\end{equation*}
Moreover, letting $u$ and $v$ be two vertices chosen uniformly at random in $\Cmax$, it holds:
\begin{equation*}
    \mathbb{P}(d(u,v) \le 3) \to 1 \quad \text{as } n \to \infty.
\end{equation*}
\end{prop}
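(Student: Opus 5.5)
We reduce both statements to the structure of the weight environment near the top of the order statistics. The giant is organised around a bounded set of heavy vertices, the \emph{top set}
\[
T:=\{i: W_i\ge \eta\varepsilon^{-1}\}.
\]
By the Poisson convergence underlying Theorem~\ref{thm:giantsize}, $|T|$ is tight for fixed $\eta>0$. Moreover, since $\varepsilon^\alpha=\lambda/n$, the probability that $T=\emptyset$ is about $e^{-\lambda\eta^{-\alpha}}$. On $\{T=\emptyset\}$ we expect no linear component, so for now we work on $\{T\neq\emptyset\}$ and let $\eta\to0$ at the end.

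\textbf{Step 1: the vertices of $T$ are pairwise within distance $2$.} Two vertices $h,h'\in T$ have a common neighbour with probability
\[
1-\prod_j\bigl(1-\min(\varepsilon W_hW_j,1)\min(\varepsilon W_{h'}W_j,1)\bigr).
\]
The expected number of common neighbours is
\[
n\,\Eb\bigl[\min(\eta W,1)^2\bigr]\asymp n\eta^\alpha\to\infty,
\]
because the main contribution comes from vertices with $W\approx\eta^{-1}$. A conditional second-moment or Chernoff bound, followed by a union bound over the tight number of pairs in $T$, shows that w.h.p.\ every pair $h,h'\in T$ has a common neighbour. This is essentially the argument used for the diameter $\le2$ claim in Theorem~\ref{thm:dust and connectivity threshold}.

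\textbf{Step 2: typical distance $\le3$.} By the proof of Theorem~\ref{thm:giantsize}, the fraction of vertices of $\Cmax$ adjacent to some $h\in T$ tends to $1$ in probability as first $n\to\infty$ and then $\eta\to0$. This holds because $1-\Xi$ is precisely the limiting probability that a typical vertex is adjacent to a vertex of weight $\asymp\varepsilon^{-1}$, and the contribution of atoms of $\Pi$ below $\eta$ vanishes. For two uniform vertices $u,v$, w.h.p.\ we therefore have $u\sim h$ and $v\sim h'$ with $h,h'\in T$.
\begin{itemize}
\item If $h=h'$, then $d(u,v)\le2$.
\item If $h\neq h'$, Step~1 gives only $d(u,v)\le 1+2+1=4$, which is not enough. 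The improvement: for typical $u$ of bounded weight, the probability that $u$ is adjacent to $h$ is $\min(W_uz,1)$, with $z=\varepsilon W_h$. If $z\ge 1$, i.e.\ $h$ is a genuine hub, then $h$ is adjacent to every vertex. By the same reasoning as in Step~1, w.h.p.\ every $h'\in T$ is adjacent to every hub $H\in\HH_\varepsilon$ (probability $\min(\varepsilon W_hW_H,1)$ with $\varepsilon W_H\ge1$ and $W_h\ge\eta\varepsilon^{-1}$, up to tuning). This alone does not yield distance $3$ either.
\end{itemize}
The cleaner route is a direct two-point computation. Condition on the weights and on $u,v$, and bound the probability that $N(u)$ and $N(v)$ are neither intersecting nor joined by an edge. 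Given $h\in N(u)$ and $h'\in N(v)$ with $h,h'\in T$, the edge $h\leftrightarrow h'$ is present with probability $\min(\varepsilon W_hW_{h'},1)$, and
\[
\varepsilon W_hW_{h'}\ge\eta^2\varepsilon^{-1}\to\infty.
\]
So all pairs in $T$ are in fact directly adjacent w.h.p., a union bound over the tight number of pairs making this rigorous. This replaces Step~1 and gives $d(u,v)\le 1+1+1=3$.

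\textbf{Step 3: diameter.} Every vertex of $\Cmax$ lies within bounded distance of $T$. Here we use the argument behind Theorem~\ref{thm:2nd_component}: for $k>(1-\alpha)^{-1}$, w.h.p.\ there is no tree of $k$ vertices all of weight below $\eta\varepsilon^{-1}$ that is attached to $T$ only through one endpoint and otherwise avoids $T$'s neighbourhood in the relevant sense. The first-moment count of such dangling paths of length $K+1$ outside $\widebar{\WW}_\delta$ is
\[
n^{k}\varepsilon^{\alpha(k-1)}(\log)^{O(1)}\cdot(\text{attachment}),
\]
which tends to $0$ exactly when $k(1-\alpha)>1$, mirroring the exponent in Theorem~\ref{thm:2nd_component}. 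Hence every vertex of $\Cmax$ is within distance $K+1$ of $T$. Combined with $\mathrm{diam}(T)\le1$, this gives
\[
\mathrm{diam}(\Cmax)\le (K+1)+1+(K+1)=2K+3.
\]

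\textbf{Main obstacle.} The main obstacle is Step~3. We must control long paths of light vertices hanging off the core, which means redoing the first-moment path count with weight integrals of the form $\Eb[\min(\varepsilon W_aW_b,1)\cdots]$, and verifying that the threshold is exactly $K=\lfloor(1-\alpha)^{-1}\rfloor$. We also have to handle the event $\{T=\emptyset\}$ via the $\eta\to0$ limit. In that case the graph has no hubs of scale $\varepsilon^{-1}$, and we would show that $\Cmax$ is then itself of size $O(1)$ and covered by the same path bound.
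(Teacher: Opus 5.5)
Your skeleton is the paper's: a clique of heavy vertices, almost all of $\Cmax$ adjacent to it (giving $d(u,v)\le 3$), and a first-moment count of connected $k$-sets avoiding the closed neighbourhood of the core (giving branches of at most $K$ vertices, hence $\mathrm{diam}(\Cmax)\le (K+1)+1+(K+1)$).

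\textbf{The gap is in Step 3}, which is the only place where $K$ enters. The count you display, $n^k\varepsilon^{\alpha(k-1)}(\log)^{O(1)}$, equals $\lambda^{k-1}n(\log n)^{O(1)}$ and diverges for every $k$. Charging each tree edge the averaged edge probability $\asymp\varepsilon^\alpha\ln(1/\varepsilon^\alpha)$ produces no threshold in $k$. The attachment factor is of order one for bounded-weight vertices, so it cannot compensate.

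The missing idea is this: non-adjacency to the core is what confines the branch to bounded weights, so each edge costs $O(\varepsilon)$ rather than $O(\varepsilon^\alpha)$. In the paper, the heaviest vertex $\tilde W$ of the $k$-set avoids $\WW_\delta$ with probability at most $e^{-d\tilde W^\alpha}$, and each tree edge is bounded by $\varepsilon\tilde W^2$. Cayley's formula then gives
\[
\mathbb{E}[X_k]\le \binom nk k^{k-2}\varepsilon^{k-1}\,\Eb\bigl[\tilde W^{2(k-1)}e^{-d\tilde W^\alpha}\bigr]\le C\,n^{k-(k-1)/\alpha},
\]
which vanishes exactly when $k>(1-\alpha)^{-1}$.

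With your $T$ this is even simpler. On $\{T\neq\emptyset\}$, any vertex of weight at least $\eta^{-1}$ is adjacent to every vertex of $T$ with probability one. So a set avoiding $\widebar{T}$ has all weights below $\eta^{-1}$, every internal edge has probability at most $\varepsilon\eta^{-2}$, and the count is at most $\binom nk k^{k-2}(\varepsilon\eta^{-2})^{k-1}$. The object to count is simply a connected $k$-set disjoint from the closed neighbourhood; no condition about attaching through one endpoint is needed. A shortest path from $u\in\Cmax$ to that neighbourhood then has at most $K$ vertices outside it. Also keep one core throughout: Step 3 currently mixes $T$ and $\widebar{\WW}_\delta$.

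\textbf{Secondary issues from choosing $T$ instead of $\WW_\delta$.} $T$ is empty with probability tending to $e^{-\lambda\eta^{-\alpha}}$, which forces the double limit. More importantly, your Step 2 covering claim is not what the paper's results give: Theorem~\ref{thm:giantsize} and Corollary~\ref{isolatedornot2} concern $\widebar{\WW}_\delta$, not $\widebar{T}$. You would additionally need:
\begin{itemize}
\item the vertices adjacent to $\WW_\delta\setminus T$ but not to $T$ form a fraction that vanishes uniformly in $n$ as $\eta\to0$ (the first-moment bound $\Eb[\min(1,\tfrac{\alpha\lambda}{1-\alpha}W\eta^{1-\alpha})]$ does this);
\item $|\Cmax|/n$ is bounded away from zero in probability.
\end{itemize}
The paper sidesteps all of this. $\WW_\delta$ is a non-empty clique w.h.p.\ by exactly your observation ($\varepsilon n^{2(1-\delta)/\alpha}\to\infty$ for $\delta<1/2$, Lemma~\ref{coredeltaexists}). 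Moreover $T\subseteq\WW_\delta$ for large $n$, and Corollary~\ref{isolatedornot2} directly places $u,v$ in $\widebar{\WW}_\delta$ w.h.p.

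Finally, your remarks about $\{T=\emptyset\}$ are false. $\Pi$ has infinitely many atoms in $(0,\eta)$, so $1-\Xi>0$ on $\{\Pi[\eta,\infty]=0\}$, and $\Cmax$ is still of linear size there; you could not show $\Cmax=O(1)$. You do not need this, though. Both target events are independent of $\eta$, so bounding their failure probability by $e^{-\lambda\eta^{-\alpha}}+o(1)$ and then letting $\eta\to0$ suffices.
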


Finally, our last result deals with the distribution of the maximum degree in $\mathbb{G}_n(\alpha,\varepsilon)$. To do so, we need to introduce some additional notation.

\begin{definition}[Rescaled degree statistics]\label{Psi_n}
Let $\mathcal{M}_p((0,1])$ be the space of the point measures on $(0,1]$ equipped with the vague topology and let
\begin{equation*}
\Psi_n:=\sum_{i \in [n] \atop D_n(i)\geq 1} \delta_{\frac{D_n(i)}{n}},
\end{equation*}
where $D_n(i)$ is the degree of vertex $i\in [n]$, defined in~\eqref{eq:degree_i}.
\end{definition}
$\Psi_n$ is a point process in $.\mathcal{M}_p((0,1])$ that register the rescaled degrees statistics of all but isolated vertices.

\begin{definition}\label{h}
For $x>0$, let
       \begin{equation*}
        h(x):=\Eb[\min(xW,1)]=
        \begin{cases}
         \frac{x^{\alpha}-\alpha x}{1-\alpha}   &x < 1;\\
         1   &x\geq 1,
        \end{cases}
    \end{equation*}
    where $W\sim Pareto(\alpha)$.
    Moreover, if $\mu \in \mathcal{M}_p((0,\infty])$ is a point measure, let
\begin{equation*} 
\mu[h]:=\sum_{z \in \mu}\delta_{h(z)}
\end{equation*}
be the transformed point measure using the continuous function $h$.
\end{definition}
The value $h(x)$, for $x>0$, can be interpreted as the asymptotic probability that a typical vertex (with weight $W$) is connected with a hub with weight $\simeq x/\eps$. Given the above definition, we can state our result on extremal statistics of degrees in our graph.

\begin{theorem}[Extreme Degrees]\label{thm:maxdegree}
Fix $\lambda \in (0,\infty)$. Let $\mathbb{G}_n(\alpha,\varepsilon)$ with $\alpha<1$ and $\varepsilon^\alpha=\lambda /n$. Let $\Psi_n$ as in Definition~\ref{Psi_n} and $h$ as in Definition~\ref{h}. 
We have
\begin{equation*}
\Psi_n \Rightarrow \Pi[h],
\end{equation*}
where the convergence is in distribution, with respect to vague topology on $\mathcal{M}_p((0,1])$ and $\Pi\in \mathcal{M}_p((0,\infty])$ has intensity measure $\nu(dt)=\lambda \alpha t^{-(\alpha+1)}dt$.  In particular, the maximum degree $\Delta_{(1)}=\max\limits_{i \in [n]} D_n(i)$ satisfies
\begin{equation*}
\frac{\Delta_{(1)}}{n}\xrightarrow{d} h(Z_{(1)})
\end{equation*}
where $Z_{(1)}=\max\{z \in \text{supp}(\Pi)\}$ follows a Fréchet distribution modified by $\lambda$, i.e. 

\begin{equation*}
\Pb\left(Z_{(1)}\leq x\right)=e^{-\lambda x^{-\alpha}}.
\end{equation*}
\end{theorem}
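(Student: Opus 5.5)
The plan is to show that only vertices with weight of order $\varepsilon^{-1}$ can have degree of order $n$, and that their degrees are asymptotically $n\,h(\varepsilon W_i)$ by a conditional law of large numbers.

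\textbf{Step 1: extremal weights.} Define the rescaled weight process $\Pi_n:=\sum_{i\in[n]}\delta_{\varepsilon W_i}$ on $(0,\infty]$. Since $n\,\Pb(\varepsilon W>t)=n\varepsilon^{\alpha}t^{-\alpha}=\lambda t^{-\alpha}$ for $t\geq\varepsilon$, the standard Poisson convergence of exceedances (Resnick) gives $\Pi_n\Rightarrow\Pi$ vaguely on $(0,\infty]$. This is the same ingredient used for Theorem~\ref{thm:giantsize}.

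\textbf{Step 2: degree concentration for large weights.} Fix $\eta>0$. On the event $\{\Pi_n((\eta,\infty])\le M\}$, which has probability close to one for $M$ large, condition on the weights. For each $i$ with $\varepsilon W_i>\eta$, the degree $D_n(i)$ is a sum of independent Bernoulli variables with parameters $\min(\varepsilon W_iW_j,1)$. Its conditional mean is $\sum_{j\ne i}\min(\varepsilon W_i W_j,1)$, and by the law of large numbers for the bounded variables $\min(xW_j,1)$, this equals $n\,h(\varepsilon W_i)(1+o_{\P}(1))$. The convergence is uniform in $x\in[\eta,\infty]$ because the family is monotone and $h$ is continuous. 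Chebyshev's inequality, using a variance of order at most $n$, then gives $\max_{i:\varepsilon W_i>\eta}|D_n(i)/n-h(\varepsilon W_i)|\to0$ in probability.

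\textbf{Step 3: small weights give sublinear degrees.} For $\varepsilon W_i\le\eta$, I would show that $\Pb(\exists i:\varepsilon W_i\le \eta,\ D_n(i)>\delta n)$ is small for $\eta$ small.

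First, $D_n(i)$ is dominated by $\sum_j\min(\eta W_j,1)+(\text{fluctuation})$. Its conditional mean is bounded by $n\,h(\eta)$ plus a small correction, and $h(\eta)\simeq\eta^\alpha/(1-\alpha)\to0$ as $\eta\to0$.

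Second, uniformity over all $n$ vertices is needed. The conditional mean is at most $n(h(\eta)+o(1))$ uniformly, since the empirical average $\frac1n\sum_j\min(\eta W_j,1)$ is a single quantity not depending on $i$ beyond monotonicity in $W_i$. A Chernoff bound for Bernoulli sums then gives a deviation probability $e^{-cn}$, and a union bound over $n$ vertices finishes the step.

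Thus every $z\in(\delta,1]$ charged by $\Psi_n$ comes from a vertex with $\varepsilon W_i>\eta$.

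\textbf{Step 4: vague convergence.} Take $f$ continuous with compact support in $(0,1]$, so $f$ vanishes on $(0,\delta]$, and pick $\eta$ with $h(\eta)<\delta/2$. Steps 2 and 3 give $\Psi_n(f)-\sum_{i:\varepsilon W_i>\eta}f(h(\varepsilon W_i))\to0$ in probability. Since $h$ is continuous and increasing on $(0,1)$, and the boundary $\eta$ is a.s. not an atom of $\Pi$, the continuous mapping theorem applied to Step 1 gives $\sum f(h(z))\,\Pi(dz)$. Laplace functionals then yield $\Psi_n\Rightarrow\Pi[h]$.

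Note that $h(z)=1$ for $z\ge1$, so atoms of $\Pi$ above $1$ all map to the point $1$. This is still fine in $\mathcal{M}_p((0,1])$, because $\Pi((1,\infty])$ is a.s. finite.

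\textbf{Step 5: maximum degree.} Since $h$ is nondecreasing, $\max \Pi[h]=h(Z_{(1)})$. Convergence of the top point follows from Step 4, since $\{\Delta_{(1)}/n\le y\}=\{\Psi_n((y,1])=0\}$ for $y>0$. Finally, $\Pb(Z_{(1)}\le x)=\Pb(\Pi((x,\infty])=0)=e^{-\lambda x^{-\alpha}}$.

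\textbf{Main obstacle.} The delicate step is Step 3, the uniform control over all $n$ vertices of small weight. In particular, moderately heavy vertices with $\varepsilon W_i$ just below $\eta$ must be handled, together with the randomness of the empirical sum $\sum_j\min(\eta W_j,1)$, which includes contributions from the hubs themselves. I would bound that sum by separating the finitely many $j$ with $\varepsilon W_j>\eta'$, and use a second-moment bound for the remaining terms.
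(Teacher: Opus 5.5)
Your argument is correct, but it takes a different route from the paper's.

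\textbf{Concentration.} The paper proves one uniform estimate, $\max_{i\in[n]}|D_n(i)/n-h(\varepsilon W_i)|\to 0$ in probability. It rests on the observation that, conditionally on $W_i$ \emph{alone}, the edge indicators $a_{ij}$, $j\neq i$, are i.i.d.\ Bernoulli$(h(\varepsilon W_i))$. Hence $D_n(i)\mid W_i\sim\mathrm{Bin}(n-1,h(\varepsilon W_i))$ exactly, and Hoeffding plus a union bound over all $n$ vertices finishes, with no split by weight. You instead condition on the whole weight vector. That forces a uniform LLN plus Chebyshev for the boundedly many vertices with $\varepsilon W_i>\eta$, and a separate Chernoff/union bound for the rest. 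The paper's annealed binomial observation would merge your Steps 2 and 3 into one line.

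The ``main obstacle'' you flag is not a real difficulty even in your quenched setup. The quantity $\frac1n\sum_j\min(\eta W_j,1)$ is an average of i.i.d.\ variables bounded by $1$, hubs included. Chebyshev therefore gives concentration at $h(\eta)$ directly, and there is no need to separate out the heavy $j$.

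\textbf{Passage to the point process.} After concentration, the paper shows that the $u$-th largest degree is carried by the $u$-th heaviest vertex. It obtains joint convergence of the top $k$ rescaled degrees from the top $k$ atoms of $\Pi_n$. It then treats $\Psi_n(f)$ by truncating at $k$ and letting $k\to\infty$ via Billingsley's theorem.

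Your Step 4 is more direct. You write $\Psi_n(f)=\Pi_n(f\circ h)+o_{\mathbb{P}}(1)$, and note that $f\circ h$ is continuous on $(0,\infty]$ and vanishes on $(0,\eta]$, since $h(\eta)<\delta/2$ and $h$ is nondecreasing. A single continuous-mapping step then gives the limit, so the remark about $\eta$ not being an atom is not even needed. This avoids the degree/weight ranking argument, which is awkward when several hubs all have degree $n-1$, and it avoids the $k\to\infty$ truncation. The maximum-degree statement, and the joint law of the top degrees, then follow from the point-process convergence itself. For the maximum degree, use $(y,1]$ with $y\in(0,1)$, which is a $\Pi[h]$-continuity set.
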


The result above shows that, as expected, the largest degrees are deterministically related to largest weights. However, due to the saturation effect given by the largest possible degree being $n-1$ and the fact that this is achieved as soon as one vertex is a hub in $\mathcal{H}_{\eps}$ (which is non-empty with positive probability), the scale of the largest degree is $n$ and the limiting random variable has an atom in $1$, since this value is achieved with probability $1-e^{-\lambda}$. We prove Theorem \ref{thm:maxdegree} in Section \ref{sec:maxdegree}. 
\begin{oss}[Limiting distribution of the maximum degree]\label{oss:max_degree}
The rescaled maximum degree $\Delta_{(1)}/n$ converges in distribution to the law of the random variable $h(Z_{(1)})$. Since the maximal atom of the point process $\Pi$, $Z_{(1)}$, follows a Fréchet distribution modified by the scale parameter $\lambda$, we have that the law of $h(Z_{(1)})$ is absolutely continuous with respect to Lebesgue on $(0,1)$ and has an atom in $1$. Indeed, 
\begin{equation*}
    \Pb(Z_{(1)} \le x) = \Pb\big(\Pi([x, \infty)) = 0\big) = e^{-\nu([x, \infty))} = \exp\left( - \int_x^\infty \lambda \alpha t^{-\alpha-1} dt \right) = e^{-\lambda x^{-\alpha}},
\end{equation*}
and for $y \in (0,1)$
\begin{equation*}
    \mathbb{P}\left( \frac{\Delta_{(1)}}{n} \le y \right) \to \Pb\big(h(Z_{(1)}) \le y\big) = \Pb\big(Z_{(1)} \le h^{-1}(y)\big) = \exp\left( -\lambda \big(h^{-1}(y)\big)^{-\alpha} \right),
\end{equation*}
while
\begin{equation*}
    \mathbb{P}\left(\frac{\Delta_{(1)}}{n-1}=1\right)\to1-e^{-\lambda}.
\end{equation*}
\end{oss}

\subsection{Discussion: universal features and related models}

\subsubsection{Similar random graph models}\label{subsec:literature}

From a mathematical perspective, several related models have been extensively analyzed in the literature. Some fundamental references are the Chung-Lu model \cite{Chung2002Average} and the Norros-Reittu model \cite{Norros2006Conditionally}, where the edge probabilities are defined as 
\begin{equation*}
    p_{ij} = \min\left(\frac{W_i W_j}{\sum_{k=1}^n W_k}, 1\right) \quad \quad \text{and} \quad \quad p_{ij} = 1 - e^{-\frac{W_i W_j}{\sum_{k=1}^n W_k}}
\end{equation*}
respectively. Unlike the model discussed in this work, both the Chung-Lu and Norros-Reittu frameworks do not feature an external tuning parameter $\varepsilon$. Instead, they are defined through the total weight sum $\sum_{k} W_k$, which acts as a self-regulating factor. When $\alpha \le 1$, this normalization automatically adjusts the scale of the edge probabilities, leading to structurally different behaviors between the finite and infinite-mean cases. A specific choice of $\varepsilon$ replacing $\sum_k W_k$ (i.e. $\varepsilon=n^{-1/\alpha}$ for $\alpha \in (0,1)$ and $\varepsilon=n^{-1}$ for $\alpha>1$, as detailed in Theorem \ref{thm:expecteddegree}) leads our model to exhibit behaviors similar to those of the aforementioned frameworks in their respective regimes. However, the introduction of an independent parameter $\varepsilon$ allows for a more general analysis, providing the flexibility to explore different edge density regimes that are not constrained by the realization of the weights. This flexibility is crucial in identifying the threshold where the emergence of hubs triggers global connectivity and a collapse in the graph's diameter (see Theorem \ref{thm:dust and connectivity threshold}).  

Another well-known framework is the Generalized Random Graph (GRG), discussed in~\cite{VanDerHofstad2016RandomV1, VanDerHofstad2024RandomV2}, with
\begin{equation*}
    p_{ij} = \frac{  W_i W_j}{\sum_k W_k +  W_i W_j}
\end{equation*} 
and previously studied in \cite{Britton2006Generating} in the form
\begin{equation*}
    p_{ij} = \frac{ W_i W_j}{n + W_i W_j} \quad \text{if $\alpha>1$} \quad\quad \text{ and }\quad\quad p_{ij} = \frac{ W_i W_j}{n^{\frac{1}{\alpha}} + W_i W_j}\quad \text{if $\alpha \in (0,1)$}.
\end{equation*}

In the physics literature, a model with a Norros–Reittu-type exponential kernel in the infinite-mean regime ($\alpha < 1$) was introduced in \cite{Garuccio2023Multiscale} as a scale-invariant framework under additive coarse-graining. Such a scale invariance has been further studied in the recent \cite{avena2026renormalisation}. Yet, the rigorous analysis of the emerging structures in the original ensemble presented in \cite{Garuccio2023Multiscale}  has been started in \cite{avena2026inhomogeneous}, incorporating the scaling parameter $\varepsilon$ directly into the exponential edge probability:
\begin{equation}\label{varp}
 p_{ij} = 1 - e^{-\varepsilon W_i W_j}.
\end{equation}

Further properties of this setting were recently explored numerically in \cite{catanzaro2026spectra} for the spectral behavior under $\varepsilon = n^{-1/\alpha}$. Moreover, in the same setting,  \cite{catanzaro2026clustering} investigates the question of clustering and geometry. The authors underline that this model is an example of a random graph ensemble that has a high clustering coefficient not caused by an underlying geometrical embedding in contrast to the case of e.g. random hyperbolic graphs \cite{BoFoMu16, Gas24}. They show that the fraction of isolated vertices and, as a consequence, the local clustering coefficient have random limits, providing another manifestation of the lack of self-averaging in this regime. 

In this work, we go beyond the analysis of isolated vertices by proving that a complete characterization of the global component structure of the graph is indeed driven by the random environment of heavy weights. In particular, our model, $\mathbb{G}_n(\alpha, \varepsilon)$, is closely related to the form in \eqref{varp}, but instead of using the exponential kernel, it uses its first-order linear approximation $\min(\varepsilon W_iW_j,1)$. This choice is still natural since it introduces a sharp "saturation" threshold that allows for a clear geometric characterization of the hubs in the graph. While the exponential kernel is smooth, the $\min$ function identifies a weight scale $W_i \ge \varepsilon^{-1}$ above which connections become certain, enabling the analysis of the connectivity threshold presented in Theorem \ref{thm:dust and connectivity threshold}, which is not directly accessible with smooth kernels. Despite these technical differences, both kernels yield remarkably similar qualitative and universal features. Our main goal is to explore this model across different regimes of $\alpha$, in particular in the infinite-mean regime where little is known in the literature.

\subsubsection{Connectivity and largest component}\label{subsec:phenomena_comparison}
The connectivity of weighted random graphs has been repeatedly studied in the literature, typically under strong regularity assumptions on the connection kernel. A notable example is the work of Devroye and Fraiman~\cite{Devroye2014Connectivity}, who study models of the form
\begin{equation*}
p_{ij} = \min\left(\frac{k(X_i,X_j)\log n }{ n}, 1\right),
\end{equation*}
where the kernel $k$ is irreducible and satisfies boundedness conditions such as $\lambda_2 \in L^\infty$, with $\lambda_2(x) = \big( \int k(x,y)^2 \, d\mu(y) \big)^{1/2}$. In this setting, connectivity is governed by $\lambda_* = \mathrm{ess\,inf}\,\lambda(x)$, where $\lambda(x) = \int k(x,y)\,d\mu(y)$, and a sharp threshold occurs at $\lambda_* = 1$. In this work, since our weights are heavy-tailed, the resulting kernel is unbounded, and our model therefore falls outside this framework for all values of $\alpha$. Moreover, in the case of $\alpha<1$ the connectivity threshold is not sharp. Notice that this same phenomenon happens in the critical $1/2$-case in the random hyperbolic graph~\cite{BoFoMu16}. 

Another very natural question is on the largest component of the graph, since the seminal result on the phase transition for the Erd\H{o}s-R\`enyi graph in the sparse regime~\cite{Erdos1960Evolution}. In most of the classical random graph processes, one can identify regimes in which one component proportional to the size of the full network appears and this proportion usually concentrates (very) fast around the typical limit (see~\cite{AnKoPa21, AnKoLaPa23,  JoKoDi26, JoZw26, MoRe98, OCo98}). By contrast, in our regime the giant component exhibits a striking lack of self-averaging, converging in distribution to a random limit (see Theorem \ref{thm:giantsize}).

\subsubsection{Maximum degree} 
The question of the largest degree is a natural one and has been studied in several contexts of random graph theory. In the classical Erd\H{o}s-R\'enyi sparse random graph, the light tails of the degree distribution cause the (properly scaled) maximum degree to concentrate around two deterministic values; see~\cite{Bollobas2001Random}. The same phenomenon occurs in random geometric graphs, as shown by Penrose~\cite{Pen03}.

In the setting of heavy-tailed weighted graphs, a genuinely continuous extreme-value distribution emerges instead. To the best of our knowledge, this has only been addressed by~\cite{BhSc22}, in a number of related but distinct models. They consider a geometric random graph, along with the Chung-Lu model and a multi-edge version of the Norros-Reittu model (models \textbf{V} and \textbf{IV}, respectively, in their notation), which are closely related to our framework. They study the distribution of the largest degree under the appropriate scaling. Two key differences separate their results from ours. First, in the Chung-Lu model, they establish convergence of the rescaled largest degree to a Fr\'echet distribution (the natural limiting extreme-value law in this setting) only when the weights have finite second moment ($\alpha > 2$ in our notation). Second, in the multi-edge version of the Norros-Reittu model, they obtain convergence to a Fr\'echet distribution for every $\alpha > 0$; however, since multiple edges are permitted, no saturation effect arises, and the limit remains Fr\'echet throughout.

A similar phenomenon has been observed in the setting of hyperbolic random graphs, see~\cite{Gas24}. There, the author proves that the ordering between degree and distance from the center (here, distance replacing weight) is preserved, analogous to our Theorem~\ref{thm:maxdegree}. In that setting, when the curvature parameter is critically equal to $1/2$, the largest degrees scale with $n$, and a saturation effect at $n-1$ can likewise be observed; see~\cite[Theorem~3.3]{Gas24}. In the random hyperbolic graph this saturation is however weaker than in our case, since the largest degree of the graph (scaled with $n$) converges to a random variable whose distribution is absolutely continuous with respect to the Lebesgue measure on $(0,1]$, in contrast to our case where the limit has an atom in $1$, see Remark~\ref{oss:max_degree}.

\subsubsection{Possible Extensions}
\label{sub:generalization}
We now discuss how the main results would adapt under alternative modeling assumptions. Let us generalize the edge probabilities, giving them the form
\begin{equation*}
    p_{ij} = g(\varepsilon W_i W_j),
\end{equation*}
with $g: (0, \infty) \to (0, 1]$ an increasing function. Equivalently, this corresponds to working with a kernel $k_n(x,y) = n g(\varepsilon x y)$. In this paper, we consider $g_1(x) = \min(x, 1)$. Other well-known choices in the literature include $g_2(x) = 1 - e^{-x}$ (Norros--Reittu) and $g_3(x) = \frac{x}{1+x}$ (GRG), see Section~\ref{subsec:literature}.

The connection function $g$ impacts the model dynamics both at the local and global levels. In particular, $g_1$, $g_2$, and $g_3$ all share the same degree scalings, established in Theorem~\ref{thm:expecteddegree}, up to prefactors. On the other hand, global connectivity is fundamentally altered if $g$ never saturates to $1$. 
The macroscopic results of Theorems \ref{thm:giantsize}, \ref{thm:2nd_component}, and \ref{thm:maxdegree}, as well as Proposition~\ref{thm:extremeultrasmallworld}---including the non-self-averaging giant component and the modified-Fréchet degree dynamics---remain structurally robust even with a general $g$.
It is worth mentioning that in Theorem \ref{thm:maxdegree}, saturation induces a Dirac point mass at $1$ in the limiting maximum degree, while for non-saturating functions $g$ (where $g(x)<1$ for all $x$), this atom disappears, making the result closer to the one on random hyperbolic graphs~\cite{Gas24}. Related questions concerning universality are further addressed in \cite{avena2026renormalisation} within a network renormalization framework.

Moreover, we highlight that as long as we simply shift the support of the weight distribution $f_W$ to $[a, \infty)$ for some $a > 0$, the control over the dust remains and the results are expected to hold in the same way. On the other hand, the setup involving a general slowly varying function, i.e., 
\begin{equation*}
    \mathbb{P}(W > w) \simeq L(w) w^{-\alpha}\quad \text{as } w \to \infty
\end{equation*} 
with $L(wa)/L(w)\to 1$ as $w\to \infty$ for all $a>0$, requires a slightly more delicate treatment. Here, the critical scaling $\varepsilon_n$ must be defined implicitly via $n \mathbb{P}(W > 1/\varepsilon_n) \to \lambda$ (or $n \varepsilon_n^\alpha L(1/\varepsilon_n) \to \lambda$). A fully rigorous extension to general slowly varying functions remains open, as it would require handling the implicit sequence $\varepsilon_n$; nevertheless, it seems reasonable to hypothesize that Karamata's integration theorem preserves the essential scaling behavior of the local moments.


\section{Connectivity Threshold. Proof of Theorem \ref{thm:dust and connectivity threshold}} \label{connectivity}

In this section, we present the proof of Theorem \ref{thm:dust and connectivity threshold}, 
which addresses the connectivity threshold for all $\alpha>0$. In Section~
\ref{connectivityregime}, we deal with the supercritical regime by stating 
Lemmas~\ref{connectivity1} and \ref{conalpha}, which provide two different 
sufficient conditions for the graph to be connected w.h.p. In Section~\ref{dustregime}, we analyze the subcritical case to show that w.h.p.\ there is at least one isolated vertex. 
Finally, in Subsection \ref{finalpart}, we prove Theorem \ref{thm:dust and connectivity threshold} 
by combining the results of both the supercritical and subcritical regimes for 
every value of $\alpha>0$.


\subsection{Connectivity regime}\label{connectivityregime}
\begin{lemma}[Connectivity regime]\label{connectivity1}
Let $\alpha>0$. If $\varepsilon n/\ln(n)\to \infty$ the graph is connected w.h.p., i.e.~
\begin{equation*}
    \mathbb{P}\left(\mathbb{G}_n(\alpha,\varepsilon) \text{ is connected}\right)\to1\quad\quad\text{ as }n\to\infty.
\end{equation*}
\end{lemma}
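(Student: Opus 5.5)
Since every weight satisfies $W_i\geq 1$, each edge probability dominates the Erd\H{o}s--R\'enyi one: $p_{ij}=\min(\varepsilon W_iW_j,1)\geq \min(\varepsilon,1)=\varepsilon$ for $n$ large (note that if $\varepsilon\geq 1$ the graph is complete). The plan is a monotone coupling. Conditionally on the weights, draw i.i.d.\ uniforms $U_{ij}$ and put the edge $\{i,j\}$ in $\mathbb{G}_n(\alpha,\varepsilon)$ if $U_{ij}\leq p_{ij}$, and in an auxiliary graph $\mathcal{G}(n,\varepsilon)$ if $U_{ij}\leq \varepsilon$. Then $\mathcal{G}(n,\varepsilon)\subseteq \mathbb{G}_n(\alpha,\varepsilon)$ almost surely. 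Since connectivity is an increasing property,
\begin{equation*}
\mathbb{P}\left(\mathbb{G}_n(\alpha,\varepsilon)\text{ is connected}\right)\geq \mathbb{P}\left(\mathcal{G}(n,\varepsilon)\text{ is connected}\right).
\end{equation*}
The right-hand side does not depend on $\alpha$, which is why the lemma holds for every $\alpha>0$.

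It remains to check that $\mathcal{G}(n,\varepsilon)$ is connected w.h.p.\ when $\varepsilon n/\ln n\to\infty$. This is the classical Erd\H{o}s--R\'enyi connectivity theorem. To stay self-contained, I would run the standard first-moment cut argument. The graph is disconnected only if some vertex set $S$ with $1\leq |S|=k\leq n/2$ has no edges to its complement. By a union bound,
\begin{equation*}
\mathbb{P}(\text{disconnected})\leq \sum_{k=1}^{\lfloor n/2\rfloor}\binom{n}{k}(1-\varepsilon)^{k(n-k)}\leq \sum_{k\geq 1}\left(n e^{-\varepsilon n/2}\right)^{k}.
\end{equation*}
Here I use $\binom{n}{k}\leq n^k$ and $n-k\geq n/2$. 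Because $\varepsilon n/\ln n\to\infty$, we have $ne^{-\varepsilon n/2}=\exp\left(\ln n-\varepsilon n/2\right)\to 0$. The geometric series therefore tends to $0$, which proves the lemma.

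There is no genuine obstacle. The only points needing care are that the coupling is legitimate and that the crude bound $\binom{n}{k}\leq n^k$ is sufficient. The coupling is legitimate because it is done conditionally on the weights, using $p_{ij}\geq\varepsilon$ pointwise. The crude bound suffices precisely because we assume the strong condition $\varepsilon n\gg\ln n$, not the sharp threshold $\varepsilon n=(1+o(1))\ln n$. Alternatively, one could simply cite Erd\H{o}s--R\'enyi~\cite{Erdos1960Evolution} for the last step.
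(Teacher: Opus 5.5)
Your proposal is correct and follows the paper's proof. The paper likewise uses $p_{ij}\geq \min(\varepsilon,1)$ to stochastically dominate $\mathcal{G}(n,\varepsilon)$, invokes monotonicity of connectivity, and then cites the Erd\H{o}s--R\'enyi connectivity theorem, for which your first-moment cut bound $\sum_{k\geq 1}(ne^{-\varepsilon n/2})^k\to 0$ is a valid self-contained replacement (your remark that $\varepsilon\geq 1$ gives the complete graph also covers the paper's implicit assumption $\min(\varepsilon,1)=\varepsilon$).
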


\begin{proof}
Since $\min(\varepsilon W_i W_j,1)\geq \min(\varepsilon,1)=\varepsilon$, we have that the edge probability in $\mathbb{G}_n(\alpha,\varepsilon)$ stochastically dominates the one in the Erd\H{o}s--R\'enyi model \(G \sim \mathcal{G}(n, \varepsilon)\). Since the event ``the graph is connected'' is increasing, we obtain 
\begin{equation*}
    \mathbb{P}\big(\mathbb{G}_n(\alpha, \varepsilon) \text{ is connected}\big) \geq \mathbb{P}(G \text{ is connected}) \to 1,
\end{equation*} indeed \(\varepsilon = \omega(\ln(n)/n)\)  (see \cite{Erdos1960Evolution} or \cite[Theorem 5.8]{VanDerHofstad2016RandomV1}). 
\end{proof}

\begin{prop}[Hubs Bring Connectivity]\label{core_bounds}
Let $\mathcal{H}_\varepsilon$ be the set of hubs as in Definition \ref{core}. Let $\alpha > 0$ and $\varepsilon > 0$. For every $n$ fixed, it holds:
\begin{equation*}
    \mathbb{P}(\mathbb{G}_n(\alpha, \varepsilon) \text{ is connected}) \geq \mathbb{P}(\text{diam}(\mathbb{G}_n(\alpha, \varepsilon)) \leq 2)  \geq \mathbb{P}(H_\varepsilon \neq \emptyset).
\end{equation*}
\end{prop}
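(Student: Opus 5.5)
The plan is to prove both inequalities by inclusion of events, which holds pointwise for every realization of the weights and of the edges. That gives the probability bounds for every fixed $n$, with no asymptotics involved.

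\emph{First inequality.} If $\text{diam}(\mathbb{G}_n(\alpha,\varepsilon))\le 2$, then every pair of vertices lies at finite graph distance (at most $2$). Hence the graph is connected, with the convention that the diameter is $+\infty$ for a disconnected graph. We therefore have $\{\text{diam}\le 2\}\subseteq\{\text{connected}\}$, and monotonicity of $\mathbb{P}$ gives the first bound. The only point to state explicitly is this convention on the diameter, since the paper defines the diameter for connected subgraphs.

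\emph{Second inequality.} We show $\{\mathcal{H}_\varepsilon\neq\emptyset\}\subseteq\{\text{diam}\le 2\}$ almost surely. Suppose some vertex $h$ has $W_h\ge \varepsilon^{-1}$. Since the Pareto weights satisfy $W_j\ge 1$ for all $j$, for every $j\neq h$ we get
\[
p_{hj}=\min(\varepsilon W_hW_j,1)\ge \min(\varepsilon\cdot\varepsilon^{-1}\cdot 1,1)=1 .
\]
So, conditionally on the weights, the edge $\{h,j\}$ is present with probability one for every $j\neq h$. Almost surely on $\{\mathcal{H}_\varepsilon\neq\emptyset\}$, the hub $h$ is therefore adjacent to all other vertices. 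Now take any two vertices $u\neq v$. If one of them is $h$, they are at distance $1$. Otherwise the path $u\,h\,v$ has length $2$. Hence the diameter is at most $2$.

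To make the ``almost surely'' step rigorous, we condition on $(W_i)_{i\in[n]}$ and take a union bound over the at most $n-1$ edges incident to $h$. Each of these edges is absent with conditional probability $0$. Integrating over the weights then gives $\mathbb{P}(\mathcal{H}_\varepsilon\neq\emptyset,\ \text{diam}>2)=0$.

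There is no real obstacle here. The only care needed is that the argument uses the lower bound $W_j\ge1$ from the scale parameter $1$ in~\eqref{paretodistribution}, and the saturation of the $\min$ kernel exactly at $\varepsilon W_iW_j\ge1$. Under the alternative kernels $g_2,g_3$ discussed in Section~\ref{sub:generalization}, this step would fail.
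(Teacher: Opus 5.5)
Your proposal is correct and follows essentially the same route as the paper. A hub saturates the kernel because $W_j\ge 1$, so it is almost surely adjacent to every other vertex (the paper's ``big star'' event), which forces diameter at most $2$ and hence connectivity. The only difference is in making the almost-sure step rigorous: the paper uses exchangeability and a union bound over the $n$ possible hubs, while you use a union bound over the edges at a chosen hub, which is immaterial.
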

\begin{proof}
 We say that a graph contains a "big star" if there is a vertex that is connected to every other vertex and we define the event \[S=\{\text{the graph contains a big star}\}.\] 
 We now show that if an element, say $1$, is a hub, then the graph contains a "big star". For a fixed $n$, we may write:
\begin{align*}
    \mathbb{P}\left(S \;\Big|\; 1 \in \HH_{\varepsilon}\right)&\geq \mathbb{P}\left(1\leftrightarrow j \text{ }\forall j \geq 2 \;\Big|\; W_1\geq \frac{1}{\varepsilon}\right)\\
    &=\mathbb{E}\left[\prod_{j\geq 2}\min(\varepsilon W_1W_j,1) \;\Big|\; W_1\geq \frac{1}{\varepsilon}\right]\\
    &\geq\mathbb{E}\left[\prod_{j\geq 2}\min(W_j,1)\;\Big|\; W_1\geq \frac{1}{\varepsilon}\right]\\
    &=\mathbb{E}[\min(W_2,1)]^{n-1}=1.
\end{align*}
By vertex exchangeability, the same holds for each vertex, i.e. $\mathbb{P}(S | i \in \HH_{\varepsilon})=1$ for all $i \in [n]$. Thus, one has $\mathbb{P}(S^\cc \cap \{ i \in \HH_\varepsilon\})=0$ for all $i \in [n]$ and, with a union bound, we obtain
\begin{equation*}
    \mathbb{P}(S^\cc \cap \{\HH_\varepsilon \neq \emptyset\})=\mathbb{P}\left(\bigcup_{i \in [n]}(S^\cc \cap \{i \in \HH_\varepsilon\}) \right)\leq \sum_{i \in [n]}\mathbb{P}(S^\cc \cap \{i \in \HH_\varepsilon \})=0.
\end{equation*}
So, clearly $\mathbb{P}(S \cap \{\HH_\varepsilon\neq \emptyset\})=\mathbb{P}(\HH_\varepsilon \neq \emptyset)-\mathbb{P}(S^\cc \cap \{\HH_\varepsilon\neq \emptyset\})=\mathbb{P}(\HH_\varepsilon \neq \emptyset)$ that gives 
\begin{align*}
\mathbb{P}(\mathbb{G}_n(\alpha,\varepsilon)\text{ is connected})&\geq\mathbb{P}(\text{diam}(\mathbb{G}_n(\alpha,\varepsilon))\leq 2)\geq\mathbb{P}(S)\\
&\geq \mathbb{P}(S \cap\{\HH_{\varepsilon}\neq \emptyset\})= \mathbb{P}(\HH_\varepsilon \neq \emptyset).
\end{align*}
\end{proof}

\begin{corollary}[Connectivity regime and diameter]\label{conalpha} 
Let $\alpha>0$ with $\varepsilon^\alpha n\to \infty$. Then $\HH_\varepsilon$ is non-empty w.h.p. In particular, as $n\to \infty$, it holds
\begin{equation*}
    \mathbb{P}\left(\mathbb{G}_n(\alpha,\varepsilon) \text{ is connected}\right)\geq \mathbb{P}(\text{diam}(\mathbb{G}_n(\alpha,\varepsilon))\leq 2)\to1.
\end{equation*}
\end{corollary}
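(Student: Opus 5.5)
By Proposition~\ref{core_bounds}, for every fixed $n$ we already have
\[
\mathbb{P}(\mathbb{G}_n(\alpha,\varepsilon)\text{ is connected})\geq \mathbb{P}(\text{diam}(\mathbb{G}_n(\alpha,\varepsilon))\leq 2)\geq \mathbb{P}(\HH_\varepsilon\neq\emptyset).
\]
So it suffices to prove that $\mathbb{P}(\HH_\varepsilon\neq\emptyset)\to1$ whenever $\varepsilon^\alpha n\to\infty$. The claim that $\HH_\varepsilon$ is non-empty w.h.p.\ and the two displayed limits then follow at once.

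To control $\HH_\varepsilon$, I would use only the independence of the weights and the explicit Pareto tail \eqref{paretodistribution}. The event $\{\HH_\varepsilon=\emptyset\}$ means $W_i<\varepsilon^{-1}$ for all $i\in[n]$, so
\[
\mathbb{P}(\HH_\varepsilon=\emptyset)=\bigl(1-\Pb(W\geq \varepsilon^{-1})\bigr)^n .
\]
There is one small point to check here: \eqref{paretodistribution} only applies for thresholds at least $1$. This holds for large $n$ whenever $\varepsilon\le1$, and in particular when $\varepsilon\to0$.

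If instead $\varepsilon\geq1$ along some subsequence, then every vertex is a hub and the claim is trivial. So I would treat that case separately, noting that $\Pb(W\geq\varepsilon^{-1})=1$ there. Otherwise $\Pb(W\ge \varepsilon^{-1})=\varepsilon^{\alpha}$, and using $1-x\le e^{-x}$ gives
\[
\mathbb{P}(\HH_\varepsilon=\emptyset)\le \exp(-n\varepsilon^\alpha)\to0 .
\]

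There is no real obstacle here. The only care needed is in handling values of $\varepsilon$ that are not small, which the case split above does. Note also that the argument never uses $\alpha\le1$, which is why the corollary holds for every $\alpha>0$, consistent with its use in Theorem~\ref{thm:dust and connectivity threshold} in both regimes.
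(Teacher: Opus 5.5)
Your proposal is correct and follows the paper's proof: reduce via Proposition~\ref{core_bounds} to showing $\mathbb{P}(\HH_\varepsilon\neq\emptyset)\to1$, then bound $\mathbb{P}(\HH_\varepsilon=\emptyset)=(1-\varepsilon^\alpha)^n\le e^{-n\varepsilon^\alpha}\to0$. Your separate handling of $\varepsilon\ge1$ is a small extra piece of care that the paper leaves implicit; it is harmless.
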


\begin{proof}
Since $|\HH_{\varepsilon}|\sim \text{Bin}(n,\varepsilon^\alpha)$, it holds $\mathbb{P}(|\mathcal{H}_\varepsilon|=0)=(1-\varepsilon^\alpha)^n\leq e^{-n\varepsilon^\alpha}\to0$, i.e. the Core $\mathcal{H}_\varepsilon$ is non-empty w.h.p. Using Proposition \ref{core_bounds} we can easily conclude. 
\end{proof}

\subsection{Dust regime}\label{dustregime}

We start introducing some elements useful for the proof of the subcritical regime. Let $h(n)=1+g(n)$ with $g(n)=\ln(n)^{-1}$ and let 
    \begin{equation*}
        \mathcal{S}_h:=\{i \in [n]: W_i\leq h(n)\}
    \end{equation*}
    be the set of  vertices with small weights, i.e.\ sufficiently close to the minimum $1$. We observe that the set $\mathcal{S}_h$ is w.h.p. non-empty since $|\mathcal{S}_h|\sim \text{Bin}(n,p_n)$ with $p_n=1-(1+\ln(n)^{-1})^{-\alpha}\simeq \alpha \ln(n)^{-1}$. Then, let
    \begin{equation*}
        \mathcal{I}_h:=\sum_{i \in [n]}\ind(W_i\leq h(n))\ind(i \text{ is isolated})=\sum_{i \in \mathcal{S}_h}\ind(i \text{ is isolated})
    \end{equation*}
    be the number of vertices in $S_h$ that are \textbf{isolated} in $\mathbb{G}_n(\alpha,\varepsilon)$. Our aim is to show that, w.h.p., the graph contains at least one isolated vertex.  In particular, we want to show $\mathbb{P}(\mathcal{I}_h\geq 1)\to 1$ as $n\to \infty$. For any $w\geq 1$ and $m\in \N$, let $\mathbb{E}_w[p^m_{ij}]:=\mathbb{E}[p^m_{ij}|W_i=w]$ be the $m$-th moment of the edge probability $p_{ij}=\min(\varepsilon W_iW_j,1)$ conditioned on having one weight equal to $w$. A simple computation, see Lemma~\ref{norma2}, gives
\begin{align}\label{A}
\hyperref[table]{\mathbb{E}_x[p_{12}^m]}=
\begin{cases}
\frac{m}{m-\alpha}(\varepsilon x)^\alpha+\frac{\alpha}{\alpha-m}(\varepsilon x)^m 	&\alpha \neq m,\\
-(\varepsilon x)^m\ln((\varepsilon x)^m)+(\varepsilon x)^m 					&\alpha=m,
\end{cases}
\end{align}
if $x\leq 1/\varepsilon$, 1 otherwise. To prove that $\mathbb{G}_n(\alpha,\varepsilon)$ has an isolated vertex w.h.p. we use Lemma~\ref{smallverticesmoment2} in this specific setting.
Hence, our goal is to ensure the that two conditions in Lemma~\ref{smallverticesmoment2} to obtain the existence of an isolated vertex w.h.p.\ are satisfied.

\begin{lemma}[Dust regime]\label{dust}
With the following choices
\begin{enumerate}
\item 	$\varepsilon^\alpha n\to 0$ when $\alpha \in (0,1]$;
\item 	$\varepsilon n/\ln(n)\to 0$	 when $\alpha >1$,
\end{enumerate}
the graph contains an isolated vertex w.h.p., i.e.~~$\mathbb{P}(\mathcal{I}\geq1)\to1.$ In particular, the graph is not connected w.h.p..
\end{lemma}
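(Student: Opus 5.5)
We use the second moment method on $\mathcal{I}_h$, the number of isolated vertices among those with weight at most $h(n)=1+\ln(n)^{-1}$, through Lemma~\ref{smallverticesmoment2}. Since $\mathcal{I}\geq \mathcal{I}_h$, it suffices to show $\mathbb{P}(\mathcal{I}_h\geq 1)\to1$. By Chebyshev (or Paley--Zygmund), this follows from two conditions: $\mathbb{E}[\mathcal{I}_h]\to\infty$ and $\mathbb{E}[\mathcal{I}_h^2]\le (1+o(1))\mathbb{E}[\mathcal{I}_h]^2+\mathbb{E}[\mathcal{I}_h]$. I expect these to be exactly the two hypotheses of Lemma~\ref{smallverticesmoment2}.

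\textbf{First moment.} Conditionally on $W_1=x\le h(n)$, the weights $W_j$ are independent, so
\[
\mathbb{P}(1\text{ isolated}\mid W_1=x)=\bigl(1-\mathbb{E}_x[p_{12}]\bigr)^{n-1}.
\]
By \eqref{A} with $m=1$ and $x\le h(n)\to1$, for $\alpha<1$ we have $\mathbb{E}_x[p_{12}]\simeq \frac{1}{1-\alpha}\varepsilon^\alpha$, and for $\alpha>1$ we have $\mathbb{E}_x[p_{12}]\simeq \frac{\alpha}{\alpha-1}\varepsilon$. For $\alpha=1$ the value is $-\varepsilon\ln\varepsilon+\varepsilon$; this is the delicate case, because $\varepsilon n\to0$ alone does not give $n\varepsilon\ln(1/\varepsilon)\to0$, so the mean degree may diverge. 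For $\alpha\neq1$ the conclusion is clean.
\begin{itemize}
\item If $\alpha<1$, then $n\mathbb{E}_x[p_{12}]\to0$, so each small vertex is isolated with probability tending to $1$. Hence $\mathbb{E}[\mathcal{I}_h]\simeq n p_n\simeq \alpha n/\ln n\to\infty$.
\item If $\alpha>1$, then $(1-\mathbb{E}_x[p_{12}])^{n-1}\ge e^{-(1+o(1))c\varepsilon n}$, with $c\varepsilon n=o(\ln n)$. This gives $\mathbb{E}[\mathcal{I}_h]\ge n^{1-o(1)}/\ln n\to\infty$.
\end{itemize}
The factor $h(n)^{\alpha}$ only changes constants by $1+o(1)$, which is why $h$ is taken so close to $1$.

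\textbf{Case $\alpha=1$.} I would avoid the first-moment computation by monotone coupling. The event ``there is an isolated vertex'' is decreasing in $\varepsilon$, since $p_{ij}$ increases with $\varepsilon$. If $\varepsilon n\to0$, we can always pick $\varepsilon'\ge\varepsilon$ with $\varepsilon' n\to0$ slowly. If the direct computation still fails, an alternative is to compare $\alpha=1$ weights with Pareto$(\alpha')$ weights for some $\alpha'$ slightly below $1$. The cleanest route is to restrict to vertices $i$ with $W_i\le h(n)$ and check that the expected number of their neighbours, $n\mathbb{E}_x[p]\le n\varepsilon(1+\ln(1/\varepsilon))$, stays bounded by $O(\ln n)$ with a small constant. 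Then $\mathbb{E}[\mathcal{I}_h]\ge n^{1-o(1)}/\ln n$ as before. This does require $n\varepsilon\ln(1/\varepsilon)=o(\ln n)$, which holds since $\varepsilon n\to0$ forces $\ln(1/\varepsilon)\ge \ln n$ up to a vanishing correction; I would verify this carefully.

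\textbf{Second moment.} This is the main obstacle. For $i\neq j$ in $\mathcal{S}_h$, the events that $i$ and $j$ are isolated are not independent, for two reasons: they share the edge $\{i,j\}$, and they share the environment of the other weights $W_k$. Conditionally on $W_i=x$ and $W_j=y$,
\[
\mathbb{P}(i,j\text{ isolated}\mid W_i=x,W_j=y)=(1-p_{ij})\,\mathbb{E}\bigl[(1-\min(\varepsilon xW,1))(1-\min(\varepsilon yW,1))\bigr]^{n-2}.
\]
Expanding the product and using $\mathbb{E}_x[p]\,\mathbb{E}_y[p]\le$ the cross term, the ratio to the squared one-point probabilities is at most $\exp\bigl(n\,\mathbb{E}[\min(\varepsilon xW,1)\min(\varepsilon yW,1)]\bigr)\le \exp\bigl(C n\,\mathbb{E}_1[p^2]\bigr)$, where we use $x,y\le h(n)$. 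By \eqref{A} with $m=2$, $n\mathbb{E}_1[p^2]$ is of order $n\varepsilon^{\alpha}$ for $\alpha<2$, of order $n\varepsilon^2\ln(1/\varepsilon)$ for $\alpha=2$, and of order $n\varepsilon^2$ for $\alpha>2$. In every regime of the lemma this tends to $0$: for $\alpha\le1$ because $n\varepsilon^\alpha\to0$, and for $\alpha>1$ because $\varepsilon\le \ln n/n$. The factor $(1-p_{ij})\le1$ is harmless. Hence the covariances are $o(\mathbb{E}[\mathcal{I}_h]^2)$, and Lemma~\ref{smallverticesmoment2} yields $\mathbb{P}(\mathcal{I}\ge1)\ge\mathbb{P}(\mathcal{I}_h\ge1)\to1$. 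It follows that the graph is not connected w.h.p.
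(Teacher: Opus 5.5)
Your argument is correct, but it handles the second moment differently from the paper. The first moment is essentially the paper's: lower-bound by $np_n(1-\mathbb{E}_{h(n)}[p_{12}])^{n-1}$ and use $n\,\mathbb{E}_{h(n)}[p_{12}]=o(\ln n)$.

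\textbf{The paper's second moment.} Proposition~\ref{smallverticesmoments3} bounds the pair term from above by pushing both weights down to $1$, and bounds $\mathbb{E}[\mathcal{I}_h]$ from below by pushing the weight up to $h(n)$. So the first-order terms do not cancel, and $\phi_n$ contains the mismatch $2n\bigl(\mathbb{E}_{h(n)}[p_{12}]-\mathbb{E}_1[p_{12}]\bigr)$. This is of order $\varepsilon n/\ln n$ for $\alpha>1$ and $n\varepsilon\ln(1/\varepsilon)/\ln n$ for $\alpha=1$, which is why the paper needs $h(n)-1=1/\ln n$.

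\textbf{Your second moment.} For fixed $(x,y)\in[1,h(n)]^2$ you compare the joint isolation probability with the product of the marginals at the same weights. Write $a=\mathbb{E}_x[p_{12}]$, $b=\mathbb{E}_y[p_{12}]$ and $c=\mathbb{E}[\min(\varepsilon xW,1)\min(\varepsilon yW,1)]$.
\begin{itemize}
\item The terms $a,b$ cancel exactly, and the ratio is at most $\frac{1}{(1-a)(1-b)}\exp\bigl(\frac{nc}{(1-a)(1-b)}\bigr)$.
\item Since $\min(hz,1)\le h\min(z,1)$, you get $c\le h(n)^2\,\mathbb{E}_1[p_{12}^2]$.
\item The bound is uniform in $(x,y)$, so by independence of $W_i,W_j$ it integrates to $(1+o(1))\mathbb{E}[\mathcal{I}_h]^2$.
\end{itemize}
You therefore only need $n\,\mathbb{E}_1[p_{12}^2]\to0$, which you check correctly in every regime. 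This is more robust than the paper's route: the closeness of $h(n)$ to $1$ becomes irrelevant, and a fixed cap such as $h\equiv 2$ would also work. The paper's route instead yields a reusable criterion stated purely in one-vertex moments.

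Two small corrections to this part:
\begin{itemize}
\item What you prove is not literally the hypothesis $\phi_n\to0$ of Lemma~\ref{smallverticesmoment2}. Conclude directly by Paley--Zygmund, $\mathbb{P}(\mathcal{I}_h>0)\ge\mathbb{E}[\mathcal{I}_h]^2/\mathbb{E}[\mathcal{I}_h^2]$, instead of citing the lemma.
\item The upper bound on the ratio uses only $ab\ge0$. The inequality $ab\le c$ that you invoke is not needed.
\end{itemize}

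\textbf{The case $\alpha=1$.} Your stated reason for $n\varepsilon\ln(1/\varepsilon)=o(\ln n)$ points the wrong way: $\ln(1/\varepsilon)\ge\ln n$ is a lower bound and cannot control the product from above. The claim is nevertheless true. With $a_n:=n\varepsilon\to0$,
$\frac{n\varepsilon\ln(1/\varepsilon)}{\ln n}=a_n+\frac{a_n\ln(1/a_n)}{\ln n}\to0.$
Your monotone-coupling idea also works if used to enlarge $\varepsilon$ in a controlled way. A vertex isolated for $\varepsilon'\ge\varepsilon$ is isolated for $\varepsilon$, so you may replace $\varepsilon$ by $\max(\varepsilon,1/(n\ln n))$. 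This keeps $\varepsilon n\to0$ and gives $\ln(1/\varepsilon)\le\ln n+\ln\ln n$. The comparison with Pareto$(\alpha')$ weights is unnecessary.
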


\begin{proof}
From  Lemma~\ref{smallverticesmoment2} it is sufficient to show $\mathbb{E}[\mathcal{I}_h] \to \infty$ and $\phi_n \to 0$ as $n \to \infty$, where $\phi_n$ is defined as:
\begin{equation*}
\phi_n := n \left( -2\mathbb{E}_1[p_{12}] + \mathbb{E}_1[p_{12}^2] + 2\frac{\mathbb{E}_{h(n)}[p_{12}]}{1-\mathbb{E}_{h(n)}[p_{12}]} \right).
\end{equation*}
The asymptotic behavior of each term in the expression of $\phi_n$ in $\mathbb{G}_n(\alpha,\varepsilon)$ is analyzed across the different regimes of $\alpha$, using \eqref{A}, and it is summarized in Table~\ref{table:asymptotics_phi_detailed}. In particular, it holds
\begin{equation*}
\phi_n\simeq 
\begin{cases}
\frac{2}{2-\alpha}\varepsilon^\alpha n\to 0	&\alpha \in (0,1),\\
2\varepsilon n\to 0 	&\alpha=1,\\
2\mathbb{E}[W] \frac{\varepsilon n}{\ln(n)}\to 0 	&\alpha>1
\end{cases}
\end{equation*}
as $n\to \infty$ under the appropriate scaling hypotheses for $\varepsilon$ in each regime. 
  Moreover,
\begin{align}\label{primomomentoi}
\notag\mathbb{E}[\mathcal{I}_h]=&\mathbb{E}\left[\sum_{i \in [n]}\ind(W_i \leq h(n))\ind(\text{i is isolated})\right]\\
\notag&=n\mathbb{E}\left[\ind(W_1 \leq h(n))\prod_{j\geq 2} (1-\min(\varepsilon W_1W_j,1)) \right]\\
\notag&\geq n\mathbb{P}(W_1\leq h(n))\mathbb{E}[1-\min(\varepsilon h(n) W_2,1)]^{n-1}\\
\notag&= n \left(1-\left(1+\frac{1}{\ln(n)}\right)^{-\alpha}\right)  (1-\mathbb{E}_{h(n)}[p_{12}])^{n-1}\\
\notag&\geq c\frac{n}{\ln(n)}(1-\mathbb{E}_{h(n)}[p_{12}])^{n-1}\geq c\frac{n}{\ln(n)}e^{-(n-1)\frac{D}{2}}\\
\notag &= ce^{-D(n-1)/2+\ln(n)-\ln(\ln(n))}\\
&= c \exp\left(\ln(n)\left( 	1-\frac{1}{2}\frac{D (n-1)}{\ln n}-\frac{\ln(\ln(n))}{\ln n}\right) 	\right),
\end{align}
where $c>0$ is a suitable constant depending on $\alpha$ and 
\begin{equation*}
    D=2\frac{\mathbb{E}_{h(n)}[p_{12}]}{1-\mathbb{E}_{h(n)}[p_{12}]}\simeq 
    \begin{cases}
        \frac{2}{1-\alpha}\varepsilon^\alpha  &\alpha \in (0,1),\\
        -2\varepsilon \ln{\varepsilon}  &\alpha=1,\\
        2\mathbb{E}[W]\varepsilon  &\alpha>1
    \end{cases}
\end{equation*}
is the term listed in Table \ref{table:asymptotics_phi_detailed} deriving from the bound $1-x\geq \exp(-x/(1-x))$. Our hypotheses ensure that $D(n-1)/\ln(n)\to 0$ in each case: for $\alpha>1$ it is trivially true, for the case $\alpha=1$ we have $\varepsilon n\to 0 \implies -\varepsilon \ln(\varepsilon)n/\ln(n)\to0$ and for $\alpha\in(0,1)$ the hypothesis $\varepsilon^\alpha n\to 0$ implies $\varepsilon^\alpha n/\ln(n)\to 0$. Thus \eqref{primomomentoi} tends to $\infty$, which finally yields $\mathbb{E}[\mathcal{I}_h]\to \infty$ as desired.
\end{proof}

\begin{oss}[Comments on the proof of Lemma \ref{dust}]\label{obs2}
   Beyond the analytical computations, $\phi_n$ quantifies the deviation of $\mathbb E[\mathcal I_h^2]$ from $\mathbb E[\mathcal I_h]^2$. Roughly speaking, the condition $\phi_n \to 0$ is related to the fact that the isolation of a specific small vertex does not significantly affect the probability of isolation of other small vertices as the system size grows.

  The restriction to the set $S_h$ is essential in this respect. Vertices with large weights may act as common neighbors for many pairs of vertices, generating non-negligible correlations between their isolation events. By imposing a uniform upper bound on the weights, these effects are suppressed and the contribution of joint isolation events becomes asymptotically negligible.

  The mechanism through which $\phi_n \to 0$ depends on the regime of $\alpha$ (see Table \ref{table:asymptotics_phi_detailed}). When $\alpha > 1$, the second-order interactions are too weak to influence the leading order, and $\phi_n$ is dominated by the first-order terms. Conversely, in the $\alpha < 1$ regime, the contribution of the second moment $A$ becomes significant, reflecting a more "rigid" structure where local dependencies are more persistent and harder to decouple. Finally, $\alpha = 1$ acts as the critical threshold between these two structural behaviors.
\end{oss}

\subsection{Proof of Theorem \ref{thm:dust and connectivity threshold}}\label{finalpart}

Here we see the proof of Theorem \ref{thm:dust and connectivity threshold}

\begin{proof}[Proof of Theorem \ref{thm:dust and connectivity threshold}]
We analyze each case. For $\alpha \in (0,1]$, Lemma \ref{dust} ensures that if $\varepsilon^\alpha n\to 0$, then $\mathbb{P}(\mathcal{I}\geq1)\to 1$. In particular, w.h.p.\ it is disconnected. Lemma \ref{conalpha} on the other hand guarantees that if $\varepsilon^\alpha n\to \infty$, then $\mathbb{P}(\mathbb{G}_n(\alpha,\varepsilon)\text{ is connected})\geq\mathbb{P}(\text{diam}(\mathbb{G}_n(\alpha,\varepsilon)\leq 2)\to1$. This completes the case $\alpha \in (0,1]$ identifying $\varepsilon=\Theta(n^{-1/\alpha})$ as threshold for the connectivity of the graph. 
On the other hand, with $\alpha>1$, Lemma \ref{dust} ensures that if $\varepsilon n/\ln(n)\to 0$, then $\mathbb{P}(\mathcal{I}\geq 1)\to 1$ and Lemma \ref{connectivity1} shows that if $\varepsilon n/\ln(n)\to \infty$, then $\mathbb{P}(\mathbb{G}_n(\alpha,\varepsilon)\text{ is connected})\to 1$. This identifies $\varepsilon=\Theta(\ln{n}/n)$ as the connectivity threshold for $\alpha>1$ in analogy with the homogeneous case. Lemma \ref{conalpha} with $\alpha>1$ only specifies that when $\varepsilon^\alpha n\to \infty$, then $\text{diam}(\mathbb{G}_n(\alpha,\varepsilon))\leq 2$ w.h.p..
\end{proof}


\section{Infinite Mean: The Giant Size is Random. Proof of Theorem \ref{thm:giantsize}}\label{sec:giantsize}

In this section, we prove Theorem \ref{thm:giantsize}, working in the infinite mean setting ($\alpha$<1) with $\varepsilon^\alpha=\lambda/n$, $\lambda \in (0,\infty)$. Since $|\HH_\varepsilon|\sim \text{Bin}(n,\lambda/n)$, it is clear that the Core $\HH_\varepsilon$ can be empty with a positive probability. Thus, in order to work with a non-empty set, we need to weaken the assumptions and work instead with the set of \emph{weak hubs} $\WW_\delta$, as introduced in Definition \ref{core_delta}. We recall, from Definition~\ref{core_delta} and Definition~\ref{Ndelta}, the two sets
\begin{equation*}
    \WW_\delta:=\{i \in [n]: W_i>n^{\frac{1-\delta}{\alpha}} \}\subseteq
\widebar{\WW}_\delta := \{ i \in [n] : i \in \WW_\delta \text{ or } \exists j \in \WW_\delta \text{ s.t. } i \leftrightarrow j \}.
\end{equation*}
First, in Section~\ref{sec:coredelta_prop} we give basic definitions and we show some properties of the set $\WW_\delta$. In particular, Lemma \ref{isolatedornot} says that a vertex $i \in [n]$ either is directly connected to $\WW_\delta$ or it is isolated w.h.p.. As a consequence, $|\mathcal{C}_{max}|/n$ can effectively be analyzed through the study of $|\widebar{\WW}_\delta|/n$. See Figure \ref{fig:graph_structure} for a graphical sketch of the structure of $\mathbb{G}_n(\alpha,\varepsilon)$ in this regime.
Then, in Section \ref{PPPconvergence} we show that the  Point Process $\Pi_n$ of rescaled weights of weak hubs converges weakly to a PPP. In Section~\ref{convergenzafunzionali}, we analyze a specific functional $F:\mathcal{M}_p((0,\infty])\to \R$ and we show that $F(\Pi_n)\to F(\Pi)$.
Later, using the previous results, in Section~\ref{proofgiantsize} we prove Lemma~\ref{lemmaginatsize} that characterizes the random limit of $|\widebar{\WW}_\delta|/n$. 
Finally, in Section \ref{finallyherethetheorem}, we prove Theorem \ref{thm:giantsize}, putting all the pieces together. 
\begin{figure}[h]
\centering
\begin{tikzpicture}[scale=0.6]
    \draw[fill=teallight!80, draw=black] (0,0) circle (5cm);
    \draw[fill=tealmid!80, draw=black] (0,0) circle (3.4cm);    
    \node[black] at (3.3, 2.3) {\small $\widebar{\WW}_\delta $};
    \draw[fill=tealdeep!80, draw=black, thick] (0,0) circle (1.2cm);
    
    \node[black] at (1.7, 0.5) {\small $\WW_\delta$ };

    \foreach \a in {0,60,120,180,240,300}{
        \node[circle, fill=black, inner sep=1.6pt] (c\a) at (\a:0.7cm) {};
    }

    \foreach \x in {0,60,120,180,240,300}{
        \foreach \y in {0,60,120,180,240,300}{
            \draw[black, very thin] (c\x) -- (c\y);
        }
    }

    \foreach \a in {40, 70, 120, 150, 190, 220, 280, 330}{
        \node[circle, fill=black, inner sep=1.6pt] (v\a) at (\a:2.4cm) {};
        \draw[black, thin, shorten >=0.7cm] (v\a) -- (0,0); 
    }

    \foreach \a in {10, 55, 100, 140, 200, 250, 310, 350}{
        \node[circle, fill=black, inner sep=1.6pt] at (\a:4.3cm) {};
    }

\end{tikzpicture}
\caption{Conceptual structure of $\mathbb{G}_n(\alpha,\varepsilon)$ for $\alpha < 1$ and $\varepsilon^\alpha=\lambda/n$. The $\delta$-Core $\WW_\delta$ forms a central clique that attracts a fraction (with mean $1-\xi_\lambda$) of the vertices, while the remaining fraction (with mean $\xi_\lambda$) remains isolated.} 
\label{fig:graph_structure}
\end{figure}

 \subsection{Basic definitions and properties of $\WW_\delta$}\label{sec:coredelta_prop}

In all the section we will work with the following assumptions: we fix $\lambda \in (0,\infty)$ and
consider $\mathbb{G}_n(\alpha,\varepsilon)$ with $\alpha<1$ and $\varepsilon^\alpha=\lambda/n$. We use $\delta\in (0,1/2)$. We start stating some basics definitions and notation useful in all the following subsections.  Let $\mathcal{M}_p((0,\infty])$ be the space of all point measures on $(0,\infty]$ equipped with the vague topology. 

\begin{definition}[$\Pi_n$]\label{defofPin} 
We introduce 
\begin{equation*}
\Pi_n=\sum_{j \in \WW_\delta}\delta_{\varepsilon W_i},    
\end{equation*}
 the point process in $\mathcal{M}_p((0,\infty])$ of rescaled weights of the vertices in $\WW_\delta$. \end{definition}
\begin{definition}\label{def:PPP_limit} Fix $\lambda\in (0,\infty)$ and $0<\alpha<1$.
    Let $\Pi \in \mathcal{M}_p((0,\infty])$ be a Poisson Point Process (PPP)  with intensity measure $\nu$ given by $\nu(dt) = \lambda \alpha t^{-(\alpha+1)}dt.$ 
\end{definition}
\begin{definition}\label{psieF}
We define the following functionals from $\mathcal{M}_p((0,\infty])$ to $\mathbb{R}$: for all  $\mu \in \mathcal{M}_p((0,\infty])$
\begin{align*}
&(i)\quad &\psi_w(\mu) &= \ind\left( \mu[w^{-1}, \infty] = 0 \right) \exp\left(-\int_0^{w^{-1}}-\ln(1 - xw) \mu(dx)\right), \quad \forall w \geq 1;\\
&(ii) \quad &F(\mu) &= \Eb[\psi_W(\mu)],\quad  \text{ where }W\sim Pareto(\alpha). 
\end{align*}
\end{definition} 
In particular, notice that, for $\mu \in \mathcal{M}_p((0,\infty])$, we may rewrite
\begin{equation}\label{F}
\psi_w(\mu) = \prod_{x \in \mathrm{supp}(\mu)} (1 - \min(xw,1)),
\quad
\text{and}
\quad
F(\mu) = \Eb\left[\prod_{x \in \mathrm{supp}(\mu)} (1 - \min(xW,1))\right].
\end{equation}

\begin{lemma}\label{coredeltaexists}
Let $\delta \in (0,1/2)$. As $n\to\infty$, it holds
\begin{equation*}
    \mathbb{P}(\mathcal{W}_\delta\text{ is a non-empty clique})\to 1.
\end{equation*} 
\end{lemma}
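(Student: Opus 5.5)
The proof splits into two independent claims: $\WW_\delta\neq\emptyset$ w.h.p., and every pair in $\WW_\delta$ is joined by an edge w.h.p. We then intersect the two events.

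\emph{Non-emptiness.} The variable $|\WW_\delta|$ is $\mathrm{Bin}(n,q_n)$ with $q_n=\Pb(W>n^{(1-\delta)/\alpha})=n^{-(1-\delta)}$. Its mean is therefore $nq_n=n^{\delta}\to\infty$. Hence
\[
\mathbb{P}(\WW_\delta=\emptyset)=(1-n^{-(1-\delta)})^n\le e^{-n^{\delta}}\to0 .
\]
This mirrors the proof of Corollary~\ref{conalpha}. For later use we also note that $|\WW_\delta|\le 2n^{\delta}$ w.h.p. This follows from Chebyshev or a Chernoff bound, since the variance is at most the mean $n^\delta$.

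\emph{Clique property.} For $i,j\in\WW_\delta$ we have $W_iW_j>n^{2(1-\delta)/\alpha}$. Since $\varepsilon=(\lambda/n)^{1/\alpha}$,
\[
1-p_{ij}\le \bigl(1-\lambda^{1/\alpha}n^{(1-2\delta)/\alpha}\bigr)_+ .
\]
This is exactly where $\delta<1/2$ enters. The exponent $(1-2\delta)/\alpha$ is positive, so $\varepsilon W_iW_j\ge1$ for large $n$ and every such pair is connected with conditional probability one. Deterministically, then, for $n$ large enough (depending only on $\lambda,\alpha,\delta$) any two weak hubs are adjacent almost surely given the weights. If one prefers not to use the exact saturation, the fallback is a conditional union bound over the at most $|\WW_\delta|^2\le 4n^{2\delta}$ pairs.

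\emph{Conclusion and main obstacle.} We write $\mathbb{P}(\WW_\delta \text{ is a non-empty clique})\ge \mathbb{P}(\WW_\delta\ne\emptyset)-\mathbb{P}(\exists\, i\neq j\in\WW_\delta,\ i\not\leftrightarrow j)$. The first term tends to $1$ and the second is $0$ for large $n$. The only real point to check is the exponent inequality $n^{2(1-\delta)/\alpha}\cdot n^{-1/\alpha}\lambda^{1/\alpha}\to\infty$, i.e.\ $1-2\delta>0$. I expect no genuine obstacle beyond confirming that saturation holds uniformly over pairs.
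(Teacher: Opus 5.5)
Your proof is correct and takes essentially the same route as the paper. You use $|\WW_\delta|\sim\mathrm{Bin}(n,n^{-(1-\delta)})$ for non-emptiness, and the deterministic saturation $\varepsilon W_iW_j\ge\lambda^{1/\alpha}n^{(1-2\delta)/\alpha}\ge 1$ for large $n$ (where $\delta<1/2$ enters) to make every pair of weak hubs adjacent with conditional probability one; the auxiliary bound $|\WW_\delta|\le 2n^{\delta}$ and the union-bound fallback are unnecessary.
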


\begin{proof}
By definition $|\WW_\delta|\sim \text{Bin}(n,n^{-(1-\delta)})$, so trivially $\mathbb{E}[|\WW_\delta|]=n^{\delta}$ and $\mathbb{P}(|\WW_\delta|=0)=(1-n^{\delta-1})^n\to 0$. Moreover, if $i,j \in \WW_\delta$, 
\begin{equation*}
p_{ij}=\min(\varepsilon W_iW_j,1)\geq \min\left(\lambda^{\frac{1}{\alpha}}n^{-\frac{1}{\alpha}}n^{\frac{2(1-\delta)}{\alpha}},1\right)=\min\left(\lambda^{1/\alpha}n^{\frac{1-2\delta}{\alpha}},1\right)=1
\end{equation*}
for $n$ sufficiently large, since $\delta < 1/2$. Thus, it trivially holds
\begin{equation*}
    \mathbb{P}(\mathcal{W}_\delta \text{ is a non-empty clique}) = \mathbb{E}\left[\prod_{i,j \in \mathcal{W}_\delta \atop i \neq j}p_{ij}\text{ }\Biggl | \text{ }\mathcal{W}_\delta \neq \emptyset\right]\mathbb{P}(\mathcal{W}_\delta \neq \emptyset)=\mathbb{P}(\mathcal{W}_\delta \neq \emptyset)\to1
\end{equation*}
as $n\to \infty$, which completes the proof.
\end{proof}

\begin{lemma}\label{isolatedornot}
Let $\delta \in (0,1/2)$. The following holds
\begin{enumerate}
\item $\mathbb{P}(1 \text{ is isolated})\to \xi_\lambda$,
\item $\mathbb{P}(1 \in \widebar{\WW}_\delta)\to 1-\xi_\lambda$,
\end{enumerate}
where \begin{equation*}
\xi_\lambda := \Eb\left[e^{-\frac{\lambda}{1-\alpha}W^\alpha}\right] = \frac{\lambda}{1-\alpha} \Gamma\left(-1, \frac{\lambda}{1-\alpha}\right).
\end{equation*} 
In particular, it holds
\begin{equation*}
    \mathbb{P}(\text{1 is either isolated or }1\in \widebar{\WW}_\delta)\to 1
\end{equation*}
as $n\to \infty$.
\end{lemma}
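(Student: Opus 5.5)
Part (i) comes from conditioning on $W_1$; part (ii) comes from showing that a vertex which is not isolated is, with high probability, directly attached to $\WW_\delta$.

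\textbf{Step 1: isolation probability.} Conditionally on $W_1=w$, the events $\{1\leftrightarrow j\}$ are independent across $j$, each with probability $\mathbb{E}_w[p_{12}]$. Hence
\[
\mathbb{P}(1\text{ is isolated}\mid W_1=w)=(1-\mathbb{E}_w[p_{12}])^{n-1}.
\]
By \eqref{A} with $m=1$, $\alpha<1$ and $w$ fixed,
\[
\mathbb{E}_w[p_{12}]=\tfrac{1}{1-\alpha}(\varepsilon w)^\alpha-\tfrac{\alpha}{1-\alpha}\varepsilon w .
\]
Since $\varepsilon^\alpha=\lambda/n$ and $\varepsilon=o(\varepsilon^\alpha)$, this gives $(n-1)\mathbb{E}_w[p_{12}]\to \frac{\lambda}{1-\alpha}w^\alpha$. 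Because $\mathbb{E}_w[p_{12}]\to0$, the conditional probability converges to $e^{-\lambda w^\alpha/(1-\alpha)}$. It is bounded by $1$, so dominated convergence in $W_1$ yields (i). The closed form $\frac{\lambda}{1-\alpha}\Gamma(-1,\frac{\lambda}{1-\alpha})$ follows by substituting $u=W^\alpha$, which is Pareto$(1)$ on $[1,\infty)$.

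\textbf{Step 2: reduction of (ii) to the last claim.} The events $\{1\text{ isolated}\}$ and $\{1\in\widebar{\WW}_\delta\}$ are disjoint on $\{\WW_\delta\neq\emptyset\}$, and by Lemma~\ref{coredeltaexists} this event has probability tending to $1$. So (ii) follows from (i) together with the final claim. It therefore suffices to show
\[
\mathbb{P}\bigl(1\text{ not isolated},\ 1\notin\widebar{\WW}_\delta\bigr)\to 0 .
\]
That event forces $1\notin\WW_\delta$, and $\mathbb{P}(1\in\WW_\delta)=n^{\delta-1}\to0$. On $\{1\notin\WW_\delta\}$ the event says that vertex $1$ has a neighbour $j$ with $W_j\le n^{(1-\delta)/\alpha}$.

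\textbf{Step 3: first-moment bound (main obstacle).} I bound the expected number of such light neighbours by a union bound over $j$:
\[
\mathbb{E}_w\bigl[\#\{j: 1\leftrightarrow j,\ W_j\le n^{(1-\delta)/\alpha}\}\bigr]\le n\,\Eb\bigl[\min(\varepsilon wW,1)\,\ind(W\le n^{(1-\delta)/\alpha})\bigr]\le n\varepsilon w\,\Eb\bigl[W\ind(W\le T)\bigr],
\]
with $T=n^{(1-\delta)/\alpha}$. For $\alpha<1$ the truncated mean satisfies $\Eb[W\ind(W\le T)]\asymp T^{1-\alpha}=n^{(1-\delta)(1-\alpha)/\alpha}$. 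Using $\varepsilon=\lambda^{1/\alpha}n^{-1/\alpha}$, the bound becomes
\[
C w\, n^{1-\frac1\alpha+\frac{(1-\delta)(1-\alpha)}{\alpha}}=C w\, n^{-\delta\frac{1-\alpha}{\alpha}}\to 0
\]
for each fixed $w$. Markov's inequality then gives the conditional probability $\to0$. Dominated convergence over $W_1$ (everything is bounded by $1$) finishes the argument.

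The delicate point is exactly this exponent computation. The threshold $n^{(1-\delta)/\alpha}$ is just below the hub scale $\varepsilon^{-1}$, and connections to vertices below it contribute only $n^{-\delta(1-\alpha)/\alpha}$. So all non-trivial connections of a typical vertex go to $\WW_\delta$.
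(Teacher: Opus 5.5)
Your proof is correct, but it runs the argument in the opposite direction from the paper. The paper quotes (i) from Theorem~\ref{thm:expecteddegree}; you redo the conditioning, which is fine. It then proves (ii) by an exact computation. Conditioning on $W_1=w$, it evaluates $\mathbb{E}_w[\min(\varepsilon wW_2,1)\ind(W_2>n^{(1-\delta)/\alpha})]$ by splitting the integral at $1/(\varepsilon w)$. This shows that the heavy part alone already carries the full mass $\frac{\lambda}{1-\alpha}w^\alpha/n$, so $\mathbb{P}(1\notin\widebar{\WW}_\delta)\to\xi_\lambda$ by dominated convergence. The ``in particular'' claim is then read off from (i) and (ii) by inclusion--exclusion.

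You instead prove the ``in particular'' claim first, via a Markov bound on the number of neighbours of weight at most $n^{(1-\delta)/\alpha}$, and deduce (ii) from it. The analytic core is the same. Your bound $Cw\,n^{-\delta(1-\alpha)/\alpha}$ on the light contribution is, up to the constant, $n$ times the correction term $\lambda^{1/\alpha}n^{\frac{\alpha-1}{\alpha}\delta-1}w$ that the paper discards as $o(1/n)$ in its term $A$. Your route needs only an upper bound on the light part, and it makes transparent why a non-isolated typical vertex hangs off $\WW_\delta$. The paper's route gives the limit of $\mathbb{P}(1\notin\widebar{\WW}_\delta)$ directly, and the heavy-part computation \eqref{bla} is what is reused in the proof of Theorem~\ref{thm:2nd_component}.

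One small correction in Step 2. The events $\{1\text{ isolated}\}$ and $\{1\in\widebar{\WW}_\delta\}$ are \emph{not} disjoint on $\{\WW_\delta\neq\emptyset\}$. Take $\WW_\delta=\{1\}$ with $1$ isolated: a singleton is a non-empty clique, so Lemma~\ref{coredeltaexists} does not exclude this. What is true, and suffices, is that the intersection is contained in $\{1\in\WW_\delta\}$. This event has probability $n^{\delta-1}\to 0$, a fact you already record a line later. So the appeal to Lemma~\ref{coredeltaexists} can simply be dropped.
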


\begin{proof}[Proof of Lemma \ref{isolatedornot}]
  $(i)$ follows from Theorem \ref{thm:expecteddegree}, by the fact that $D_n(1) \xrightarrow{d} \mathrm{Poisson}\left(\frac{\lambda}{1-\alpha}W_1^\alpha\right)$; indeed, as $n\to \infty$ 
\begin{equation*}
\mathbb{P}(\text{1 is isolated})=\mathbb{P}(D_n(1)=0) \to \Eb\left[e^{-\frac{\lambda}{1-\alpha}W^\alpha}\right].
\end{equation*}
For $(ii)$, we observe that $\mathbb{P}(i \in \widebar{\WW}_\delta)=1-\mathbb{P}(1 \notin \widebar{\WW}_\delta)$ with
\begin{align*}
\mathbb{P}(1 \notin \widebar{\WW}_\delta)&=\mathbb{E}\left[\ind(W_1 < n^{\frac{1-\delta}{\alpha}})\prod_{j \geq 2}\left(1-\min(\varepsilon W_1W_j,1)\ind(W_j > n^{\frac{1-\delta}{\alpha}})\right)\right]\\
&=\mathbb{E}\left[\ind(W_1 < n^{\frac{1-\delta}{\alpha}})\mathbb{E}_{W_1}\left[ 1-\min(\varepsilon W_1W_2,1)\ind(W_2 > n^{\frac{1-\delta}{\alpha}}) \right]^{n-1} \right].
\end{align*}
We now apply the Dominated Convergence Theorem to conclude. For a fixed value of $w\geq 1$, it holds that $\ind(w < n^{\frac{1-\delta}{\alpha}})\to 1$ as $n \to \infty$. Therefore, it remains to show that
\begin{align*}
\mathbb{E}_{w}\left[ 1-\min(\varepsilon wW_2,1)\ind(W_2 > n^{\frac{1-\delta}{\alpha}}) \right]^{n-1}\to e^{-\frac{\lambda}{1-\alpha}w^\alpha}.
\end{align*}
We compute:
\begin{align*}
\mathbb{E}_{w}\left[ \min(\varepsilon wW_2,1)\ind(W_2 > n^{\frac{1-\delta}{\alpha}})\right] &= \alpha \int_{n^{\frac{1-\delta}{\alpha}}}^{\infty}\frac{\min(\varepsilon w x,1)}{x^{\alpha+1}}dx\\
&= \alpha \int_{n^{\frac{1-\delta}{\alpha}}}^{\frac{1}{\varepsilon w}}\frac{\varepsilon w x}{x^{\alpha+1}}dx + \alpha \int_{\frac{1}{\varepsilon w}}^{\infty} \frac{1}{x^{\alpha+1}}dx\\
&= A+B.
\end{align*}
We observe that, for every fixed $w\geq1$, the term $A$ is well-defined for $n$ sufficiently large, since
\begin{equation*}
\frac{1}{\varepsilon w} > n^{\frac{1-\delta}{\alpha}} \iff w < n^{-\frac{1-\delta}{\alpha}}n^{\frac{1}{\alpha}}\lambda^{-\frac{1}{\alpha}} \iff w < n^{\frac{\delta}{\alpha}}\lambda^{-\frac{1}{\alpha}}.
\end{equation*}
Evaluating the integrals, we find:
\begin{equation*}
B = \left[-x^{-\alpha} \right]^{\infty}_{\frac{1}{\varepsilon w}} = (\varepsilon w)^\alpha = \frac{\lambda}{n}w^\alpha,
\end{equation*}
and
\begin{align*}
A &= \varepsilon w \left[ \frac{\alpha}{1-\alpha}x^{-\alpha+1} \right]^{\frac{1}{\varepsilon w}}_{n^{\frac{1-\delta}{\alpha}}} = \frac{\alpha}{1-\alpha}\varepsilon w \left((\varepsilon w)^{\alpha-1}-n^{\frac{1-\alpha}{\alpha}(1-\delta)} \right)\\
&= \frac{\alpha}{1-\alpha}\left( \frac{\lambda w^\alpha}{n} - \lambda^{\frac{1}{\alpha}}n^{\frac{\alpha-1}{\alpha}\delta-1}w \right) = \frac{\alpha}{1-\alpha}\frac{\lambda w^\alpha}{n} + o\left(\frac 1n\right).
\end{align*}
Since $n(A+B) \to \frac{\lambda}{1-\alpha}w^\alpha$, the left-hand side of \eqref{bla} becomes
\begin{equation} \label{bla}
\left(1-\frac{\lambda w^\alpha}{1-\alpha}\frac{1}{n}(1+o(1))\right)^{n-1},
\end{equation}
which yields the desired result. Moreover, we observe that
\begin{equation}\label{isolatoono2}
\mathbb{P}(\text{1 is either isolated or 1}\in \widebar{\WW}_\delta)=\mathbb{P}(\text{1 is isolated})+\mathbb{P}(1 \in \widebar{\WW}_\delta)+o(1)\to 1.
\end{equation}
\end{proof}
An easy consequence of the previous lemma are the two following asymptotics:
\begin{enumerate}    
\item $\lim_{n \to \infty} \frac{\mathbb{E}[\mathcal{I}]}{n} = \xi_\lambda$,  
\item $\lim_{n \to \infty}\frac{\mathbb{E}[|\widebar{\WW}_\delta|]}{n} = 1 - \xi_\lambda$.\end{enumerate}
\begin{corollary}\label{isolatedornot2}
Let $\delta \in (0,1/2)$. It holds
\begin{equation*}
    \frac{|\mathcal{C}_{max}|}{n}-\frac{|\widebar{\WW}_\delta|}{n}\xrightarrow{\text{ }\P\text{ }}0\quad\quad\text{as }n\to\infty.
\end{equation*}
\end{corollary}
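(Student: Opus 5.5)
We plan to sandwich $|\Cmax|$ between $|\widebar{\WW}_\delta|$ and $|\widebar{\WW}_\delta|$ plus a quantity that is $o_{\P}(n)$.

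\textbf{Lower bound.} By Lemma~\ref{coredeltaexists}, w.h.p.\ $\WW_\delta$ is a non-empty clique. On this event $\widebar{\WW}_\delta$ is connected: every vertex of $\widebar{\WW}_\delta$ is either in the clique or adjacent to it. Hence $\widebar{\WW}_\delta$ lies inside a single component, and w.h.p.
\begin{equation*}
|\Cmax|\ \ge\ |\widebar{\WW}_\delta|.
\end{equation*}

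\textbf{Upper bound.} Let
\begin{equation*}
R_n:=\#\{i\in[n]:\ i\ \text{not isolated and}\ i\notin\widebar{\WW}_\delta\}.
\end{equation*}
By exchangeability and the last statement of Lemma~\ref{isolatedornot},
\begin{equation*}
\Ep[R_n]=n\,\P(1\ \text{not isolated},\ 1\notin \widebar{\WW}_\delta)=n\cdot o(1).
\end{equation*}
Markov's inequality then gives $R_n/n\xrightarrow{\P}0$. On the high-probability event above, every component other than the one containing $\widebar{\WW}_\delta$ avoids $\widebar{\WW}_\delta$. Such a component is either an isolated vertex (size $1$) or consists of non-isolated vertices outside $\widebar{\WW}_\delta$, so its size is at most $\max(1,R_n)$. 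Two cases follow.
\begin{itemize}
\item If $\Cmax$ is the component containing $\widebar{\WW}_\delta$, then $|\Cmax|\le |\widebar{\WW}_\delta|+R_n$, since the non-isolated vertices of that component lying outside $\widebar{\WW}_\delta$ are counted by $R_n$.
\item Otherwise $|\widebar{\WW}_\delta|\le|\Cmax|\le \max(1,R_n)$. In this case both $|\Cmax|/n$ and $|\widebar{\WW}_\delta|/n$ are $o_{\P}(1)$, so their difference is as well.
\end{itemize}

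\textbf{Conclusion.} Combining the two bounds,
\begin{equation*}
0\le \frac{|\Cmax|-|\widebar{\WW}_\delta|}{n}\le \frac{R_n+1}{n}\quad\text{w.h.p.},
\end{equation*}
and the right-hand side tends to $0$ in probability.

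The only delicate step is the identity $\P(1\ \text{not isolated},\ 1\notin\widebar{\WW}_\delta)\to0$. It follows because the events $\{1\ \text{isolated}\}$ and $\{1\in\widebar{\WW}_\delta\}$ are disjoint up to the negligible event $\{1\in\WW_\delta\}$, whose probability is $n^{\delta-1}\to0$. Their probabilities converge to $\xi_\lambda$ and $1-\xi_\lambda$ respectively, which sum to $1$; this is exactly the content of \eqref{isolatoono2}. A first-moment argument on $R_n$ therefore suffices, and no second-moment computation is needed.
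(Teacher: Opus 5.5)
Your proof is correct. Its core matches the paper's: bound the non-isolated vertices outside $\widebar{\WW}_\delta$ in expectation via \eqref{isolatoono2}, apply Markov, and use Lemma~\ref{coredeltaexists} to place $\widebar{\WW}_\delta$ inside a single component.

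The routes differ in how you handle the possibility that $\Cmax$ is not the component containing $\widebar{\WW}_\delta$. The paper tries to rule this out by contradiction: it argues that otherwise $|\widebar{\WW}_\delta|/n\to0$, which clashes with $\Ep[|\widebar{\WW}_\delta|]/n\to1-\xi_\lambda>0$. Read literally, that argument only excludes the failure event holding \emph{with high probability}. Even done carefully, the first-moment comparison only rules out limiting failure probabilities larger than $\xi_\lambda$. Your case split makes the question irrelevant: on the failure event both $|\Cmax|$ and $|\widebar{\WW}_\delta|$ are at most $\max(1,R_n)=o_{\P}(n)$, so the corollary follows in every case. For this statement, your version is the cleaner and fully rigorous one.

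What your route does not yield is the side claim $\widebar{\WW}_\delta\subseteq\Cmax$ w.h.p. The paper later invokes this claim ``from the proof of Corollary~\ref{isolatedornot2}'' in Theorem~\ref{thm:2nd_component} and in the diameter part of Proposition~\ref{thm:extremeultrasmallworld}. Obtaining it requires an extra input showing that $|\widebar{\WW}_\delta|/n$ is bounded away from $0$ in probability. Two ways to get this:
\begin{itemize}
\item Note that $|\widebar{\WW}_\delta|\ge D_n(v_1)$ w.h.p.; by Lemma~\ref{lem:uniform_concentration}, $D_n(v_1)/n$ is close to $h(\varepsilon W_{(1)})$, which converges in distribution to $h(Z_{(1)})>0$ almost surely.
\item Alternatively, combine Lemma~\ref{lemmaginatsize} with the fact that $\Xi<1$ almost surely.
\end{itemize}
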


\begin{proof}
Since the entire graph $\mathbb{G}_n(\alpha,\varepsilon)$ has at least one edge w.h.p., trivially $\mathcal{C}_{max}$ does not consist of an isolated vertex w.h.p. Therefore, we observe that
\begin{equation*}
\mathcal{C}_{max}\setminus\widebar{\WW}_\delta\subseteq\{i \in [n]: i \notin \widebar{\WW}_\delta, i \text{ not isolated}  \}
=\{i \in [n]: i \in \widebar{\WW}_\delta \text{ or } i \text{ isolated}  \}^\cc.
\end{equation*}
Thus, from equation \eqref{isolatoono2}, as $n\to \infty$ we have
\begin{equation*}
    \mathbb{E}[|\mathcal{C}_{max}\setminus\widebar{\WW}_\delta|]\leq n-n \mathbb{P}(\text{1 is either isolated or 1}\in\widebar{\WW}_\delta )=n-n(1-o(1)).
\end{equation*}
Using Markov's inequality, for each $\eta>0$ we obtain
\begin{align}\label{cmaxx}
    \mathbb{P}\left(\frac{|\mathcal{C}_{max}\setminus\widebar{\WW}_\delta|}{n}\geq \eta\right)
    \leq\frac{n-n (1-o(1))}{n\eta}\to0,
\end{align}
i.e. $|\mathcal{C}_{max}\setminus\widebar{\WW}_\delta|/n\xrightarrow{\P} 0$. 
Now we show that $\widebar{\WW}_\delta \subseteq \mathcal{C}_{\max}$ w.h.p.. If, by contradiction, $\widebar{\WW}_\delta \not\subseteq \mathcal{C}_{\max}$, by the fact that $\widebar{\WW}_\delta$ is connected w.h.p., we have that $\mathcal{C}_{\max} \subseteq \mathcal{C}_{\max} \setminus \widebar{\WW}_\delta$ w.h.p.. Thus, $|\mathcal{C}_{\max}|/n \xrightarrow{\mathbb{P}} 0$, as seen in \eqref{cmaxx}. Moreover, $|\widebar{\WW}_\delta|/n \le |\mathcal{C}_{\max}|/n \xrightarrow{\mathbb{P}} 0$. Since it is a bounded quantity, its expectation goes to $0$ as well. However, this contradicts the fact that $\mathbb{E}[|\widebar{\WW}_\delta|]/n \to 1 - \xi_\lambda > 0$. Hence, $\widebar{\WW}_\delta \subseteq \mathcal{C}_{\max}$ w.h.p., which implies $|\mathcal{C}_{\max}\setminus\widebar{\WW}_\delta| = |\mathcal{C}_{\max}| - |\widebar{\WW}_\delta|$ w.h.p. Thus, we have the desired result.
\end{proof}


\subsection{Poisson Point Process approximation}\label{PPPconvergence}

Recall the point measure $\Pi_n$
from Definition~\ref{defofPin}. Recall as well the PPP $\Pi$ from Definition~\ref{def:PPP_limit}, with intensity measure $\nu(dt) = \lambda \alpha t^{-(\alpha+1)} dt$. 

\begin{lemma}\label{PPPCOLORLLARY}
As $n\to \infty$, it holds
\begin{equation*}    
\Pi_n \Rightarrow \Pi,
\end{equation*}
where $\Rightarrow $ means convergence in distribution of a sequence of random variables in the space $\mathcal{M}_p((0,\infty])$, endowed with the vague topology.

\end{lemma}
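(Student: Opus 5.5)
The natural route is the classical convergence criterion for binomial point processes (Resnick, \emph{Extreme Values, Regular Variation and Point Processes}, Prop.~3.21). Write
\begin{equation*}
\Pi_n=\sum_{i=1}^n \delta_{\varepsilon W_i}\,\ind\bigl(W_i>n^{\frac{1-\delta}{\alpha}}\bigr).
\end{equation*}
The key observation is that the truncation threshold, once rescaled, vanishes:
\begin{equation*}
\varepsilon\, n^{\frac{1-\delta}{\alpha}}=\lambda^{1/\alpha}n^{-\delta/\alpha}=:t_n\to 0.
\end{equation*}
So on any set bounded away from $0$, $\Pi_n$ coincides with the full binomial process $\widetilde\Pi_n:=\sum_{i=1}^n\delta_{\varepsilon W_i}$ for $n$ large. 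The points $\varepsilon W_i$ are i.i.d., so Resnick's criterion says $\widetilde\Pi_n\Rightarrow\Pi$ on $\mathcal{M}_p((0,\infty])$ as soon as
\begin{equation*}
n\,\Pb(\varepsilon W\in\cdot)\xrightarrow{v}\nu(\cdot)
\end{equation*}
vaguely on $(0,\infty]$.

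\textbf{Step 1: vague convergence of the mean measure.} Compact subsets of $(0,\infty]$ are contained in sets $[x,\infty]$ with $x>0$, and $\nu$ has no atoms. It therefore suffices to check, for every $x>0$,
\begin{equation*}
n\,\Pb(\varepsilon W>x)=n\,(x/\varepsilon)^{-\alpha}=n\,\varepsilon^\alpha x^{-\alpha}=\lambda x^{-\alpha}=\nu((x,\infty]).
\end{equation*}
This holds as soon as $x/\varepsilon\ge1$, which is true for $n$ large. By differences this gives convergence on intervals $(a,b]$, hence vague convergence.

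\textbf{Step 2: Laplace functionals.} If one wants to avoid citing Resnick, the same conclusion follows by hand. Take $f\ge0$ continuous with compact support in $(0,\infty]$, say with support in $[x_0,\infty]$. For $n$ large, $t_n<x_0$, so $\Pi_n(f)=\widetilde\Pi_n(f)$ exactly. By independence,
\begin{equation*}
\Ep\bigl[e^{-\Pi_n(f)}\bigr]=\Bigl(1-\tfrac1n\,n\,\Eb\bigl[1-e^{-f(\varepsilon W)}\bigr]\Bigr)^n\to\exp\Bigl(-\int(1-e^{-f})\,d\nu\Bigr).
\end{equation*}
The limit uses Step 1: the function $1-e^{-f}$ is continuous, bounded and compactly supported, so $n\,\Eb[1-e^{-f(\varepsilon W)}]\to\int(1-e^{-f})\,d\nu$. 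Convergence of Laplace functionals for all such $f$ characterizes convergence in distribution in $\mathcal{M}_p$, which proves $\Pi_n\Rightarrow\Pi$.

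\textbf{Main obstacle.} The only delicate point is topological: the state space $(0,\infty]$ includes $\infty$ but excludes $0$. Relatively compact sets are therefore exactly those bounded away from $0$, and this is why the truncation at $n^{(1-\delta)/\alpha}$ is harmless: it sits at the rescaled level $t_n\to0$, which eventually lies below the support of any fixed test function. The infinitely many points accumulating near $0$ in $\Pi$ do not affect vague convergence. One should also note that $\nu(\{\infty\})=0$, so no mass escapes to $\infty$. Finally, there is a harmless typo in Definition~\ref{defofPin}: the summation index should be $i$, not $j$.
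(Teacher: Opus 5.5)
Your proof is correct and follows essentially the same route as the paper: compute the Laplace functional $\Ep[e^{-\Pi_n(f)}]=\bigl(1-\Eb[(1-e^{-f(\varepsilon W)})\ind(W>n^{(1-\delta)/\alpha})]\bigr)^n$ and use that the rescaled truncation level $\varepsilon n^{(1-\delta)/\alpha}\to 0$, so that $n$ times the expectation tends to $\int(1-e^{-f})\,d\nu$; in fact it equals this integral exactly once the truncation level drops below the support of $f$, which you point out. Your remark that the summation index in the definition of $\Pi_n$ should be $i$ rather than $j$ is also correct.
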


\begin{proof}
By construction, $\Pi_n$ is an element of $\mathcal{M}_p((0,\infty])$. To prove the result, it is sufficient to show the convergence of the corresponding Laplace functionals $\psi_{\Pi_n}(f) \to \psi_\Pi(f)$ for any $f \in C_0^+((0,\infty])$ (see \cite[Proposition~3.19]{resnick1987extreme}). In this context $C_0^+((0,\infty])$ is the space of continuous, non negative, compactly supported function on $(0,\infty]$. 

We have
\begin{align*}
 \psi_{\Pi_n}(f) &= \mathbb{E}\left[e^{-\sum_{i \in [n]}f(\eps W_i)\ind(i \in \WW_\delta)} \right]=\mathbb{E}\left[e^{-f(\eps W_1)\ind(1 \in \WW_\delta)} 	\right]^n\\
 &=\left(1-\mathbb{E}\left[(1-e^{-f(T_1)})\ind(1 \in \WW_\delta)\right]\right)^n=(1-\mathbb{E}[h_n(W_1)])^n
\end{align*}
for any $f \in C_0^+((0,\infty])$. Thus,
\begin{align*}
n\mathbb{E}[h_n(W_1)]&=n\alpha \int_{n^{\frac{1-\delta}{\alpha}}}^{\infty}\frac{1-e^{-f(\varepsilon t)}}{t^{\alpha+1}}dt\\
&=\alpha n\varepsilon^\alpha \int_{\varepsilon n^{\frac{1-\delta}{\alpha}}}^{\infty}\frac{1-e^{-f(y)}}{dy^{\alpha+1}}dy\to\alpha \lambda \int_0^{\infty}\frac{1-e^{-f(y)}}{y^{\alpha+1}}dy
\end{align*}
since $\varepsilon n^{\frac{1-\delta}{\alpha}} =\lambda^{\frac{1}{\alpha}}n^{\frac{-\delta}{\alpha}} \to 0$. We notice that the right hand side is finite, since the support of $f$ is bounded away from the origin. Using the standard limit $(1-x_n/n)^n\to e^{-x}$, where $x_n\to x$, we conclude
\begin{equation*}
\psi_{\Pi_n}(f)\to \exp\left(-\int_0^\infty (1-e^{-f(t)})\nu(dt) \right)
\end{equation*}
which is the Laplace functional of a PPP($\nu$).
\end{proof}

It is worth noting that if $\Pi_n$ would have been defined using the weights of all vertices $i\in [n]$, instead of only those with $i\in \WW_{\delta}$ the limit $\Pi$ would have been the same in this topology, which indeed do not consider everything that accumulates in the origin (the case of all small weights under the $\eps$ scaling).

\subsection{Continuity of the limiting functional}\label{convergenzafunzionali}
Recall $\psi_w$ and $F$ as in Definition \ref{psieF}. The goal of this section is to show $F(\Pi_n)\to F(\Pi)$, as $n\to\infty$. This result is stated in Lemma \ref{continuitàdiF}. We notice that $\psi_w$ and $F$ are not continuous functionals on the whole space $\mathcal{M}_p((0,\infty])$ under the vague topology. So, we want to approximate $\psi_w$ and $F$ with a sequence of vaguely continuous functionals $\psi_\eta$ (and $F_\eta$) by reducing the support of $f_w$ away from the boundaries $0$ and $w^{-1}$. Let us start with the following definition.

\begin{definition}[Vaguely continuous approximation]\label{def_approx}
For a fixed $w\geq1$, let $f_w(x)=-\ln(1-wx)\mathbb{1}(x\in [0,w^{-1}))$. For any $\eta > 0$ sufficiently small, we define the truncated and continuous approximation $f_w^{(\eta)} \in C_0^+((0, \infty])$ as follows:
\begin{equation*}
f_w^{(\eta)}(x) = 
\begin{cases} 
0 & \text{if } x \leq \frac{\eta}{2} \text{ or } x \geq w^{-1}-\frac{\eta}{2}, \\
f_w(x) & \text{if } \eta \leq x \leq w^{-1}-\eta, \\
\text{linear interpolation} & \text{otherwise,}
\end{cases}
\end{equation*}
such that $0 \leq f_w^{(\eta)}(x) \leq -\ln (1-wx)$ for all $x \in (0, w^{-1})$ and $f_w^{(\eta)}$ is continuous with $\text{supp}(f_w^{(\eta)}) \subseteq [\frac{\eta}{2}, w^{-1}-\frac{\eta}{2}]$ and coincides with $f_w$ in the set $[\eta,w^{-1}-\eta]$.
Accordingly, we define the approximated functionals:
\begin{enumerate}
    \item $\psi_w^{(\eta)}(\mu) = \ind(\mu[w^{-1},\infty]=0)\exp\left(-\int_0^{w^{-1}}f_w^{(\eta)}(x) \mu(dx)\right)$,
    \item $F_\eta(\mu) = \Eb[\psi_W^{(\eta)}(\mu)]$.
\end{enumerate}
\end{definition}

\begin{lemma}\label{B0}
As $n \to \infty$, it holds that $F_\eta(\Pi_n) \xrightarrow{d} F_\eta(\Pi)$.
\end{lemma}

\begin{proof}
By Definition \ref{def_approx}, the approximating function $f_w^{(\eta)}$ is continuous and has compact support strictly contained in $(0, \infty]$. This means that, if $\mu_n\to \mu$ vaguely, $\mu_n(f_w^{(\eta)})\to\mu(f_w^{(\eta)})$ as $n\to \infty$. We, then consider
\begin{equation*}
\psi_w^{(\eta)}(\mu_n) = \ind(\mu_n[w^{-1},\infty]=0)\exp\left(-\int_0^{w^{-1}}f_w^{(\eta)}(x) \mu_n(dx)\right).
\end{equation*}
We observe that the indicator function $\ind(\mu_n[w^{-1},\infty]=0)$ is not always continuous with respect to the vague topology, as it is sensitive to the presence of atoms at the boundary point $w^{-1}$.
Then, for each $w \geq 1$, the map $\mu \mapsto \psi_w^{(\eta)}(\mu)$ is continuous at $\mu$ whenever $\mu(\{w^{-1}\}) = 0$. Since $\mu$ has at most countably many atoms, the set of $w$ such that $\mu(\{w^{-1}\}) > 0$ is at most countable. Hence, 
\[
\psi_w^{(\eta)}(\mu_n) \to \psi_w^{(\eta)}(\mu)
\]
for almost every $w \ge 1$ with respect to the Lebesgue measure. Since the Pareto distribution of $W$ is absolutely continuous with respect to the Lebesgue measure, this convergence holds almost surely with respect to $W$. Moreover, $0 \le \psi_w^{(\eta)} \le 1$, so by the Dominated Convergence Theorem we obtain
\[
F_\eta(\mu_n) \to F_\eta(\mu).
\]
since $F_\eta(\mu)=\Eb[\psi_w^{(\eta)}(\mu)]$. This shows that $F_\eta$ is continuous with respect to the vague topology. By Lemma \ref{PPPCOLORLLARY}, the point process $\Pi_n$ converges weakly to $\Pi$ with respect to the vague topology on $\mathcal{M}_p((0,\infty])$. Applying the Continuous Mapping Theorem (see, e.g., \cite{resnick1987extreme}, Section $3.5$, page $152$) yields
\[
F_\eta(\Pi_n) \xrightarrow{d} F_\eta(\Pi),
\]
as desired.
\end{proof}

\begin{lemma}\label{B1}
As $\eta\to0$, it holds that $F_\eta(\Pi) \xrightarrow{\Pb} F(\Pi)$.
\end{lemma}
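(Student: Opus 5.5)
We will in fact prove almost sure convergence $F_\eta(\Pi)\to F(\Pi)$ as $\eta\to0$, which implies convergence in probability. Fix a realization of $\Pi$ in a full-measure event $\Omega_0$ on which three things hold:
\begin{enumerate}
\item $\Pi$ has finitely many atoms in every $[a,\infty]$ with $a>0$;
\item $\int_0^1 x\,\Pi(dx)<\infty$;
\item $\Pi$ has no atom at $\infty$.
\end{enumerate}
Property (ii) holds a.s. by Campbell's theorem, since $\Eb\int_0^1 x\,\Pi(dx)=\int_0^1 \lambda\alpha x^{-\alpha}dx=\lambda\alpha/(1-\alpha)<\infty$ because $\alpha<1$. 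Properties (i) and (iii) hold because $\nu([a,\infty])=\lambda a^{-\alpha}<\infty$ and $\nu(\{\infty\})=0$. On $\Omega_0$ we show $\psi_w^{(\eta)}(\Pi)\to\psi_w(\Pi)$ for Lebesgue-a.e. $w\ge1$. Since $0\le\psi^{(\eta)}_w,\psi_w\le1$, dominated convergence in $W$ (whose law is absolutely continuous) then gives $F_\eta(\Pi)\to F(\Pi)$ pointwise on $\Omega_0$.

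Fix $w\ge1$ with $\Pi(\{w^{-1}\})=0$. This excludes only countably many $w$, since on $\Omega_0$ the atoms of $\Pi$ are countable. Both functionals share the factor $\ind(\Pi[w^{-1},\infty]=0)$. If it vanishes, both functionals are $0$. Otherwise all atoms lie in $(0,w^{-1})$, and it suffices to show
\[
\int_0^{w^{-1}} f^{(\eta)}_w\,d\Pi\to\int_0^{w^{-1}} f_w\,d\Pi
\]
as $\eta\to0$, allowing the value $+\infty$ on the right, in which case $\psi_w(\Pi)=0$. Since $\Pi(\{w^{-1}\})=0$ and $\Pi$ is locally finite away from $0$, there is a gap $\gamma>0$ with no atoms in $(w^{-1}-\gamma,w^{-1}]$. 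Hence for $\eta<\gamma$ the upper truncation of $f^{(\eta)}_w$ near $w^{-1}$ does not affect the integral, and the right-hand side is finite.

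Near the origin we use $0\le f_w^{(\eta)}\le f_w$ and $f_w(x)=-\ln(1-wx)\le Cwx$ for $x\le w^{-1}/2$. This gives
\[
\Bigl|\int f_w\,d\Pi-\int f^{(\eta)}_w\,d\Pi\Bigr|\le \int_{(0,\eta]} Cwx\,\Pi(dx)\to0,
\]
where the integral over $(0,\eta]$ accounts for the lower truncation (on $(\eta,w^{-1}-\eta)$ the two functions coincide, and the upper region is empty of atoms for small $\eta$). The right-hand side tends to $0$ by property (ii). This completes the pointwise argument.

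The main obstacle is the boundary behaviour. At $0$, infinitely many atoms accumulate, and summability of $\sum x$ over these atoms is needed. This is exactly where $\alpha<1$ enters, and it also shows that $F(\Pi)$ is not spuriously $0$. At $w^{-1}$ the function $f_w$ blows up, which we handle by excluding the null set of bad $w$ and using the atom-free gap.

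The identification $\psi_w(\Pi)=\prod(1-\min(xw,1))$ from \eqref{F} is consistent with this: the infinite product converges to a nonzero limit exactly when $\sum x<\infty$ near $0$.
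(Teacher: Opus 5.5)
Your proof is correct, but it takes a different route from the paper.

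\textbf{The paper's route.} The paper works in $L^1$. For each fixed $w$ it bounds
\[
0\le \Pi(f_w)-\Pi(f_w^{(\eta)})\le R_{w,\eta}+S_{w,\eta},
\]
where $R_{w,\eta}$ and $S_{w,\eta}$ are the contributions of $\Pi$ on $[0,\eta]$ and on $[w^{-1}-\eta,w^{-1}]$. It then uses Campbell's formula to show $\Eb[R_{w,\eta}]\to0$, via $f_w(x)\le Cwx$ and $\alpha<1$. It shows $\Eb[S_{w,\eta}]\to0$ via the integrability of $-\ln u$ at $0$. Next, $|e^{-x}-e^{-y}|\le|x-y|$, Fubini and dominated convergence give $\Eb|F(\Pi)-F_\eta(\Pi)|\to0$, and Markov's inequality finishes.

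\textbf{Your route.} You argue pathwise. At the lower boundary you use the almost sure finiteness of $\int_0^1 x\,\Pi(dx)$. This is the same first-moment computation, but used once, globally. At the upper boundary you do not estimate the logarithmic singularity against $\nu$. Instead you discard the countably many $w$ with $\Pi(\{w^{-1}\})>0$ and use an atom-free gap $(w^{-1}-\gamma,w^{-1}]$, so the upper truncation eventually sees no atoms at all. On that event, finiteness of $\int f_w\,d\Pi$ also follows, since $-\ln(1-wx)\le x/\gamma$ on $(0,w^{-1}-\gamma]$.

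\textbf{What each buys.} Your version gives a stronger conclusion, almost sure convergence, and avoids computing $\Eb[S_{w,\eta}]$. The paper's version buys reusability. The same expectation bounds, with constants independent of $n$, are exactly what Lemma~\ref{B2} needs for $\Pi_n$, where $\limsup_{n}$ must be controlled uniformly in $n$. Your gap $\gamma$ depends on the realization, and for $\Pi_n$ it would depend on $n$, so the pathwise argument would not transfer to that lemma.

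A minor remark: your property (iii) is not needed, since an atom at $\infty$ would simply make both functionals vanish.
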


\begin{proof}
For every $w \geq 1$ and $\eta>0$ sufficiently small, let
\[
R_{w,\eta}=\int_0^\eta f_w(x) \Pi(dx)
\quad\text{and}\quad
S_{w,\eta}=\int_{w^{-1}-\eta}^{w^{-1}} f_w(x) \Pi(dx).
\]
By Definition \ref{def_approx}, we have
\begin{equation}\label{C.1}
0 \le \Pi(f_w)-\Pi(f_w^{(\eta)}) \le R_{w,\eta}+S_{w,\eta}.
\end{equation}
  We first show that both $R_{w,\eta}$ and $S_{w,\eta}$ vanish in expectation as $\eta \to 0$.
Using the Campbell formula for a Poisson Point Process with intensity $\nu$, we obtain
\[
\Eb[R_{w,\eta}]
= \int_0^\eta f_w(x)\,\nu(dx)
= \alpha \lambda \int_0^\eta \frac{-\ln(1-xw)}{x^{\alpha+1}}dx.
\]
Since $-\ln(1-xw)\le C xw$ for $x$ small, we get
\[
\Eb[R_{w,\eta}]
\le C w \alpha \lambda \int_0^\eta x^{-\alpha}dx
= C w \frac{\alpha\lambda}{1-\alpha} \eta^{1-\alpha} \to 0
\]
as $\eta \to 0$, since $\alpha <1$. Similarly, performing the change of variable $u=1-xw$, we obtain
\[
\Eb[S_{w,\eta}]
= \alpha \lambda w^\alpha \int_0^{w\eta} \frac{-\ln(u)}{(1-u)^{\alpha+1}} du.
\]
For $\eta$ small, the denominator is bounded by a constant and thus
\[
\Eb[S_{w,\eta}]
\le C \int_0^{w\eta} -\ln(u)\,du
= C\big[ u - u\ln u \big]_{0}^{w\eta}
\to 0.
\]
  Therefore, from \eqref{C.1}
\[
\Eb[|\Pi(f_w)-\Pi(f_w^{(\eta)})|] \to 0.
\]
  Using that $|e^{-x}-e^{-y}|\le |x-y|$ for $x,y\ge 0$, we deduce $|\psi_w(\Pi)-\psi_w^{(\eta)}(\Pi)|\leq|\Pi(f_w)-\Pi(f_w^{(\eta)})|$, that implies
\begin{equation}\label{C.2}
\Eb[|\psi_w(\Pi)-\psi_w^{(\eta)}(\Pi)|] \to 0
\quad \text{for every } w\ge 1
\end{equation}
as $\eta\to0$. We now pass to the functional $F$. Observe that
\[
|F(\Pi)-F_\eta(\Pi)|
= \left| \Eb[\psi_W(\Pi)-\psi_W^{(\eta)}(\Pi)\mid \Pi] \right|
\le \Eb[|\psi_W(\Pi)-\psi_W^{(\eta)}(\Pi)| \mid \Pi].
\]
Taking expectations and applying Fubini's theorem, we obtain
\[
\Eb[|F(\Pi)-F_\eta(\Pi)|]
\le \Eb \left[ \Eb[|\psi_W(\Pi)-\psi_W^{(\eta)}(\Pi)|\mid W] \right].
\]

  Since $0 \le \psi_w, \psi_w^{(\eta)} \le 1$ and from \eqref{C.2}, we conclude by the Dominated Convergence Theorem that
\[
\Eb[|F(\Pi)-F_\eta(\Pi)|] \to 0.
\]
  Finally, by Markov's inequality,
\[
F_\eta(\Pi) \xrightarrow{\Pb} F(\Pi).
\]
\end{proof}

\begin{lemma}\label{B2}
It holds
\begin{equation*}
\lim_{\eta \to 0} \limsup_{n \to \infty} \mathbb{P}( |F(\Pi_n)-F_\eta(\Pi_n)| \geq \theta) = 0
\end{equation*}
for each positive $\theta$.
\end{lemma}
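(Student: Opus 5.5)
The plan is to mirror the proof of Lemma~\ref{B1}, replacing the Poisson process $\Pi$ by the finite-$n$ point process $\Pi_n$ and the Campbell formula by a direct first-moment computation over the i.i.d.\ weights. First, by Markov's inequality it suffices to show
\begin{equation*}
\lim_{\eta\to0}\limsup_{n\to\infty}\mathbb{E}\bigl[|F(\Pi_n)-F_\eta(\Pi_n)|\bigr]=0 .
\end{equation*}
As in Lemma~\ref{B1}, $|F(\Pi_n)-F_\eta(\Pi_n)|\le \Eb\bigl[|\psi_W(\Pi_n)-\psi^{(\eta)}_W(\Pi_n)|\,\big|\,\Pi_n\bigr]$, with $W$ an independent Pareto variable. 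By Fubini it is then enough to bound $\mathbb{E}[|\psi_w(\Pi_n)-\psi_w^{(\eta)}(\Pi_n)|]$ for each fixed $w\ge1$, uniformly enough in $n$ to apply dominated convergence in $w$; the integrand is trivially bounded by $1$.

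For fixed $w$, on the event $\{\Pi_n[w^{-1},\infty]=0\}$ the two indicators agree. Using $|e^{-x}-e^{-y}|\le|x-y|$, the difference is bounded by $R^n_{w,\eta}+S^n_{w,\eta}$, where
\begin{equation*}
R^n_{w,\eta}=\int_0^\eta f_w\,d\Pi_n, \qquad S^n_{w,\eta}=\int_{w^{-1}-\eta}^{w^{-1}} f_w\,d\Pi_n .
\end{equation*}
Off that event both $\psi$'s vanish. Since $\Pi_n$ is a sum over i.i.d.\ vertices, its first moment is
\begin{equation*}
\mathbb{E}[\Pi_n(g)]=n\alpha\int_{n^{(1-\delta)/\alpha}}^\infty g(\varepsilon t)\,t^{-\alpha-1}\,dt=\alpha\lambda\int_{\varepsilon n^{(1-\delta)/\alpha}}^\infty g(y)\,y^{-\alpha-1}\,dy\le \int g\,d\nu ,
\end{equation*}
using $n\varepsilon^\alpha=\lambda$ exactly. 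Hence $\mathbb{E}[R^n_{w,\eta}]\le \int_0^\eta f_w\,d\nu$ and $\mathbb{E}[S^n_{w,\eta}]\le\int_{w^{-1}-\eta}^{w^{-1}} f_w\,d\nu$, bounds uniform in $n$ that were already shown in Lemma~\ref{B1} to tend to $0$ as $\eta\to0$. The computation is $\le Cw\,\eta^{1-\alpha}$ near the origin, using $\alpha<1$, and $\le C\int_0^{w\eta}(-\ln u)\,du$ near $w^{-1}$.

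The main obstacle is the point $w^{-1}$: $f_w$ blows up there, and the constant in the $S$ bound depends on $w$, since the denominator $(1-u)^{-\alpha-1}$ is bounded only for $w\eta$ small. Therefore $\sup_n \mathbb{E}|\psi_w(\Pi_n)-\psi^{(\eta)}_w(\Pi_n)|\to0$ holds pointwise in $w$ but not uniformly. I would handle this by bounding the $\limsup_n$ of the $w$-integral via Fatou/reverse Fatou. The integrand is at most $\min\{1,\,g_\eta(w)\}$, where $g_\eta(w)$ is the $n$-independent $\nu$-bound above, and $g_\eta(w)\to0$ for every $w$. Dominated convergence in $W$, with dominating function $1$, then gives the claim as $\eta\to0$. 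One should also double-check the integrability of $f_w$ against $\nu$ near $w^{-1}$, which the logarithmic singularity guarantees. Note that we also used $f_w^{(\eta)}\le f_w$, which holds by Definition~\ref{def_approx}.
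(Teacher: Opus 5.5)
Your proof is correct and follows essentially the same route as the paper: Markov's inequality, Fubini over the auxiliary $W$, the pointwise bound $|\psi_w(\Pi_n)-\psi_w^{(\eta)}(\Pi_n)|\le R^n_{w,\eta}+S^n_{w,\eta}$, first-moment estimates uniform in $n$, and dominated convergence in $w$. One small difference in your favour: you note that the intensity of $\Pi_n$ is exactly $\nu$ restricted to $(\varepsilon n^{(1-\delta)/\alpha},\infty]$, so the estimates of Lemma~\ref{B1} apply verbatim, which gives the uniformity in $n$ more cleanly than the paper's recomputation and makes the reverse Fatou step unnecessary.
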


\begin{proof}[Proof of Lemma \ref{B2}]
Very similar to the proof of Lemma \ref{B1} but here we also have to take care of uniformity on $n$. $\forall w\geq 1$, $\forall n \in \mathbb{N}$ and $\forall \eta>0$ sufficiently small, let
\begin{equation*}
R_{w,\eta,n}=\int_0^\eta f_w(x) \Pi_n(dx)\quad\quad \text{ and } \quad \quad
S_{w,\eta,n}=\int_{w^{-1}-\eta}^{w^{-1}} f_w(x) \Pi_n(dx).
\end{equation*}
Using that $f_w(x)=-\ln(1-xw)\leq C xw$ for $x$ near 0,  we get
\begin{align}\label{B.1.A}
\notag\mathbb{E}[R_{w,\eta,n}]&=\mathbb{E}\left[\sum_{i \in [n]}f_w(\varepsilon W_i)\ind(\varepsilon W_i\leq \eta) \right]\leq \mathbb{E}\left[\sum_{i \in [n]} C \varepsilon W_i w \ind(\varepsilon W_i<\eta)\right]\\
\notag&=C\varepsilon w n \mathbb{E}[W\ind(\varepsilon W<\eta)]=C\varepsilon w n \alpha \int_1^{\frac{\eta}{\varepsilon}} \frac{x}{x^{\alpha+1}}dx\\
\notag &\leq C\varepsilon w n \alpha \int_0^{\frac{\eta}{\varepsilon}} \frac{x}{x^{\alpha+1}}dx=C \varepsilon^\alpha w n  \frac{\alpha}{1-\alpha}\eta^{1-\alpha}\\
&= C w\lambda \frac{\alpha}{1-\alpha} \eta^{1-\alpha}\to 0
\end{align}
since $\alpha <1$. Similarly, we get:
\begin{align}\label{B.1.B}
\notag\mathbb{E}[S_{w,\eta,n}]&=\mathbb{E}\left[ \sum_{i \in [n]} f_w(\varepsilon W_i)\ind(\varepsilon W_i \in [w^{-1}-\eta,w^{-1}])\right]\\
\notag&=n\alpha\int_{w^{-1}\varepsilon^{-1}-\eta\varepsilon^{-1}}^{w^{-1}\varepsilon^{-1}}\frac{-\ln(1-\varepsilon xw)}{x^{\alpha+1}}dx\\
\notag&=n\alpha \int_0^{\eta w}\frac{-\ln(y)}{\left(\frac{1-y}{\varepsilon w}\right)^{\alpha+1}}\frac{1}{\varepsilon w}dy
=\alpha n \varepsilon^{\alpha}w^{\alpha}\int_0^{\eta w}\frac{-\ln(y)}{(1-y)^{\alpha+1}}dy\\
\notag&\leq C \alpha \lambda w^\alpha \int_0^{\eta w}-\ln(y) dy
=C \alpha \lambda w^{\alpha}\left[y-y\ln y \right]^{\eta w}_0\\
&=C \alpha \lambda w^{\alpha}(\eta w-\eta w\ln (\eta w) )\to 0\quad\quad \text{ as }\eta\to0.
\end{align}
In particular, this implies
\begin{equation*}
\sup_{n\to\infty} \mathbb{E}[R_{w,\eta,n}+S_{w,\eta,n}]\leq g_w(\eta)\to 0 \quad\quad\text{ as } \eta\to 0
\end{equation*}
where $g_w(\eta)$ does not depend on $n$ and is defined appropriately from \eqref{B.1.A} and \eqref{B.1.B}. This implies that, uniformly on $n$,
\begin{align*}\label{B.1.0}
\notag\mathbb{E}[|\Pi_n(f_w)-\Pi_n(f_w^{(\eta)})|]&=\mathbb{E}[\Pi_n(f_w)-\Pi_n(f_w^{(\eta)})]\leq\mathbb{E}[R_{w,\eta,n}+S_{w,\eta,n}]\leq g_w(\eta)\to 0 
\end{align*}
as $\eta\to0$. Since $|\psi_w(\Pi_n)-\psi_w^{(\eta)}(\Pi_n)|\leq |e^{-\Pi_n(f_w)}-e^{-\Pi_n(f_w^{(\eta)})} |\leq|\Pi_n(f_w)-\Pi_n(f_w^{(\eta)})|$, we also have
\begin{equation}\label{B.1.1}
\limsup_{n\to \infty}\mathbb{E}[|\psi_w(\Pi_n)-\psi_w^{(\eta)}(\Pi_n)|]\leq g_w(\eta)\to0
\end{equation}
which, using Markov's inequality, implies
\begin{align*}
\lim_{\eta\to0}\limsup_{n\to \infty}\mathbb{P}(|\psi_w(\Pi_n)-\psi^{(\eta)}_w(\Pi_n)|\geq \theta)\leq\frac{g_w(\eta)}{\theta}\to 0,
\end{align*}
for every $\theta>0$. We now pass to the functional $F$. We observe that
\[
|F(\Pi_n)-F_\eta(\Pi_n)|
= \left| \mathbb{E}[\psi_W(\Pi_n)-\psi_W^{(\eta)}(\Pi_n)| \Pi_n] \right|
\le \mathbb{E}[|\psi_W(\Pi_n)-\psi_W^{(\eta)}(\Pi_n)| \mid\Pi_n].
\]
Taking expectations and using Fubini's theorem, we obtain
\[
\mathbb{E}[|F(\Pi_n)-F_\eta(\Pi_n)|]
\le \mathbb{E}\left[\mathbb{E}[|\psi_W(\Pi_n)-\psi_W^{(\eta)}(\Pi_n)| \mid W]\right].
\]
Since $0 \le \psi_W, \psi_W^{(\eta)} \le 1$, the conditional expectation is bounded. Thus, by the Reverse Fatou's Lemma, 
\begin{equation}\label{B.1.2}
\limsup_{n\to\infty}\mathbb{E}[|F(\Pi_n)-F_\eta(\Pi_n)|]
\le \mathbb{E}\left[\limsup_{n\to\infty}\mathbb{E}[|\psi_W(\Pi_n)-\psi_W^{(\eta)}(\Pi_n)| \mid W]\right].
\end{equation}
Using the Dominated Convergence Theorem and \eqref{B.1.1}, the right-hand side of \eqref{B.1.2} vanishes as $\eta \to 0$. Hence, by Markov's inequality,
\[
\lim_{\eta \to 0} \limsup_{n \to \infty}
\mathbb{P}(|F(\Pi_n)-F_\eta(\Pi_n)| \geq \theta) = 0.
\]
\end{proof}

\begin{lemma}\label{continuitàdiF}
    Let $\Pi_n$ as in \ref{defofPin}. Let $\Pi$ be a Poisson Point Process with intensity measure $\nu(t)=\alpha \lambda t^{-\alpha-1}dt$. Let $F$ be a functional in $\mathcal{M}_p((0,\infty])$, as defined in \eqref{F}. Then it holds:
    \begin{equation*}
    F(\Pi_n)\xrightarrow{d} F(\Pi).
    \end{equation*}
\end{lemma}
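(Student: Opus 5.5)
This is the standard approximation argument for convergence in distribution (Billingsley, Theorem 3.2, "converging together"). Its three hypotheses are exactly Lemmas \ref{B0}, \ref{B1} and \ref{B2}. Concretely, set $X_n:=F(\Pi_n)$, $Y_{\eta,n}:=F_\eta(\Pi_n)$, $Y_\eta:=F_\eta(\Pi)$ and $X:=F(\Pi)$. All of these are $[0,1]$-valued random variables.
\begin{enumerate}
\item For each fixed $\eta$, Lemma \ref{B0} gives $Y_{\eta,n}\xrightarrow{d} Y_\eta$ as $n\to\infty$.
\item Lemma \ref{B1} gives $Y_\eta\xrightarrow{\Pb} X$ as $\eta\to0$, hence $Y_\eta\xrightarrow{d}X$.
\item Lemma \ref{B2} gives $\lim_{\eta\to0}\limsup_n \mathbb{P}(|X_n-Y_{\eta,n}|\ge\theta)=0$ for every $\theta>0$.
\end{enumerate}
Together these yield $X_n\xrightarrow{d}X$.

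To keep the argument self-contained, I would verify the conclusion directly with a bounded Lipschitz test function $g$; such functions determine convergence in distribution by the Portmanteau theorem. Fix $g$ with $\|g\|_\infty\le1$ and Lipschitz constant $L$, and split
\[
|\mathbb{E}g(X_n)-\Eb g(X)|\le \mathbb{E}|g(X_n)-g(Y_{\eta,n})| + |\mathbb{E}g(Y_{\eta,n})-\Eb g(Y_\eta)| + |\Eb g(Y_\eta)-\Eb g(X)|.
\]
The first term is bounded by $L\theta+2\,\mathbb{P}(|X_n-Y_{\eta,n}|\ge\theta)$. The second term tends to $0$ as $n\to\infty$ by Lemma \ref{B0}. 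The third term is bounded by $L\theta+2\,\Pb(|Y_\eta-X|\ge\theta)$.

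Taking $\limsup_n$, then $\eta\to0$ using Lemmas \ref{B2} and \ref{B1}, and finally $\theta\to0$ gives the claim.

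The one point that needs care is that Lemma \ref{B0} is really a continuity statement for $F_\eta$ that holds only off a null set of configurations. The boundary-atom issue at $w^{-1}$ was already handled there by averaging over the absolutely continuous $W$, so no further obstacle remains. Everything else is bookkeeping of the order of limits: first $n\to\infty$, then $\eta\to0$, then $\theta\to0$.
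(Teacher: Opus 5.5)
Your proposal is correct and is exactly the paper's argument: the paper's proof simply lists Lemmas~\ref{B0}, \ref{B1} and \ref{B2} as the three hypotheses of Billingsley's converging-together theorem, and your bounded-Lipschitz three-term estimate is the standard proof of that theorem written out. One small wording point: in Lemma~\ref{B0} the exceptional set is a Lebesgue-null set of weights $w$ (those with $\mu(\{w^{-1}\})>0$), not a null set of configurations, so after averaging over $W$ the functional $F_\eta$ is in fact vaguely continuous at every $\mu$.
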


\begin{proof}
    Let $F_\eta$ as in Definition \ref{def_approx}. We have

\begin{align*}
    &F_\eta(\Pi_n) \xrightarrow[n\to\infty]{d} F_\eta(\Pi)\quad\quad\quad &\text{from Lemma \ref{B0}},\\
    &F_\eta(\Pi) \xrightarrow[\eta\to0]{\Pb} F(\Pi)\quad\quad &\text{from Lemma \ref{B1}},\\
    &\forall \theta>0\quad\lim_{\eta \to 0} \limsup_{n \to \infty} \mathbb{P}( |F(\Pi_n)-F_\eta(\Pi_n)| \geq \theta) = 0\quad\quad\quad &\text{from Lemma \ref{B2}}.
\end{align*}
Thus, we can directly apply Billingsley's Theorem (see Theorem 4.2 in \cite{billingsley2013convergence}, 1968) to conclude. 
\end{proof}

\subsection{Lack of self-averaging}\label{proofgiantsize}

In this subsection, we characterize the random limit of \(|\widebar{\WW}_\delta|/n\) through its complement. Using the Poisson point process \(\Pi\) that describes the asymptotic rescaled weights in \(\WW_\delta\) (Lemma~\ref{PPPCOLORLLARY}), we express \(|\widebar{\WW}_\delta^{\mathrm c}|/n\) as the probability that a typical vertex \(W\sim\mathrm{Pareto}(\alpha)\) is not connected to \(\operatorname{supp}(\Pi)\). The result is stated in Lemma \ref{lemmaginatsize}, after the concentration estimates of Lemmas \ref{lemmaA} and \ref{lemmaB}. We use the notation introduced at the beginning of Section \ref{sec:giantsize}, in particular \(\Pi_n,\Pi\) and the functionals \(\psi_w,F\).

\begin{definition}\label{hatp}
Given the weights $\mathbf{W}=(W_i)_{i\in[n]}$, we define
\begin{equation*}
\hat{p}_i(\mathbf{W}) = \ind(i \notin \WW_\delta) \prod_{j \in \WW_\delta} \left( 1 - \min(\varepsilon W_i W_j, 1) \right).
\end{equation*}
\end{definition}
For each $i\in [n]$, the term $\hat{p}_i(\mathbf{W})$ in the above definition is the conditional probability for vertex $i$ not to be connected to the $\delta$-Core $\WW_\delta$. Sometimes we will simply write $\hat{p}_i$ instead of $\hat{p}_i(\mathbf{W})$. In the following lemma we prove that the sum of such terms is very close to the cardinality of the set $\widebar{\WW}_\delta^{\mathrm c}$.

\begin{lemma}\label{lemmaA}
Let $\delta\in(0,1/2)$. As $n \to \infty$, it holds:
\begin{equation*}
\frac{|\widebar{\WW}_\delta^\cc|}{n} - \frac{1}{n}\sum_{i \in [n]} \hat{p}_i(\mathbf{W}) \xrightarrow{\mathbb{P}} 0.
\end{equation*}
\end{lemma}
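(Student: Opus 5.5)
We condition on the weights $\mathbf{W}$ and use a second-moment (Chebyshev) argument. Given $\mathbf{W}$, the set $\WW_\delta$ is deterministic. For $i\notin\WW_\delta$, the event $\{i\notin\widebar{\WW}_\delta\}$ depends only on the edges between $i$ and $\WW_\delta$. Hence
\begin{equation*}
\mathbb{E}\bigl[\ind(i\in\widebar{\WW}_\delta^\cc)\mid \mathbf{W}\bigr]=\hat p_i(\mathbf{W}),
\end{equation*}
and vertices $i\in\WW_\delta$ contribute zero on both sides, since they lie in $\widebar{\WW}_\delta$ and $\hat p_i=0$. Writing $X_n:=|\widebar{\WW}_\delta^\cc|-\sum_i\hat p_i$, we get $\mathbb{E}[X_n\mid\mathbf{W}]=0$. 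It therefore suffices to show that $\mathbb{E}[\mathrm{Var}(|\widebar{\WW}_\delta^\cc|\mid\mathbf{W})]=o(n^2)$ and then apply the conditional Chebyshev inequality followed by averaging over $\mathbf{W}$.

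The key observation is that, conditionally on $\mathbf{W}$, the indicators $\ind(i\in\widebar{\WW}_\delta^\cc)$ for distinct $i\notin\WW_\delta$ are independent. Each indicator is a function of the disjoint edge sets $\{\{i,j\}:j\in\WW_\delta\}$, and two distinct vertices $i,i'\notin\WW_\delta$ share no edge to $\WW_\delta$: the edge $\{i,i'\}$ does not go into $\WW_\delta$. Since the edges are conditionally independent Bernoulli variables, the conditional covariances vanish. Thus
\begin{equation*}
\mathrm{Var}\bigl(|\widebar{\WW}_\delta^\cc|\mid\mathbf{W}\bigr)=\sum_{i\notin\WW_\delta}\hat p_i(1-\hat p_i)\le n,
\end{equation*}
and for every $\eta>0$,
\begin{equation*}
\mathbb{P}\bigl(|X_n|\ge \eta n\bigr)=\mathbb{E}\bigl[\mathbb{P}(|X_n|\ge\eta n\mid\mathbf{W})\bigr]\le \frac{n}{\eta^2n^2}\to0 .
\end{equation*}

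There is no real obstacle here. The only point needing care is to justify the conditional independence properly: the events must be measurable with respect to pairwise disjoint families of edge indicators. We also have to handle the vertices in $\WW_\delta$ separately, which is trivial as noted above. No asymptotics of the weights are needed, so the bound holds for every $n$, with $\delta$ playing no role beyond defining $\WW_\delta$.
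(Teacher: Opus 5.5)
Your proposal is correct and follows essentially the same route as the paper. Conditionally on $\mathbf{W}$, the indicators of non-connection to $\WW_\delta$ are independent Bernoulli variables with parameters $\hat p_i(\mathbf{W})$, so the conditional variance of $|\widebar{\WW}_\delta^{\mathrm c}|/n$ is at most $1/n$, and a conditional Chebyshev bound averaged over $\mathbf{W}$ gives the claim. Your justification of the conditional independence via disjoint edge sets, and your separate handling of the vertices in $\WW_\delta$, are somewhat more careful than the paper's write-up, but the argument is the same.
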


\begin{proof}
Let $X_i=\ind(i \nleftrightarrow \WW_\delta)$ for all $i \in [n]$. Conditionally on $\mathbf{W}$, the indicators $(X_i)_{i\in[n]}$ are independent Bernoulli random variables with  success probabilities \((\hat{p}_i(\mathbf{W}))_{i\in [n]}\). We observe that:
\begin{equation*}
\mathbb{E}\left[ \frac{|\widebar{\WW}_\delta^\cc|}{n} \mid \mathbf{W} \right] =
\frac{1}{n}\sum_{i \in[n]} \mathbb{E}[X_i \mid \mathbf{W}]=
 \frac{1}{n} \sum_{i=1}^n  \hat{p}_i.
\end{equation*}
Moreover
\begin{align*}
\mathbb{V}\left[\frac{|\widebar{\WW}_\delta^\cc|}{n}\mid \mathbf{W} \right]&=\frac{1}{n^2}\sum_{i \in [n]}\mathbb{V}(X_i\mid \mathbf{W})+\frac{1}{n^2}\sum_{i,j \in [n] \atop i\neq j}Cov(X_i,X_j\mid \mathbf{W})\\
&= \frac{1}{n^2}\sum_{i \in [n]}\mathbb{V}(X_i \mid \mathbf{W})= \frac{1}{n^2}\sum_{i \in [n]} \hat{p}_i(1- \hat{p}_i)\leq \frac{1}{n}\to 0
\end{align*}
So it brings to
\begin{align*}
\mathbb{P}\left(\left|\frac{|\widebar{\WW}_\delta^\cc|}{n}-\frac{1}{n}\sum_{i} \hat{p}_i\right|> \eta \right)&= \mathbb{E}\left[\mathbb{P}\left(\left|\frac{|\widebar{\WW}_\delta^\cc|}{n}-\frac{1}{n}\sum_{i} \hat{p}_i\right|> \eta \mid \mathbf{W}\right) \right]\\
&\leq \frac{\mathbb{E}\left[\mathbb{V}(\frac{|\widebar{\WW}_\delta^\cc|}{n} \mid \mathbf{W}) \right]}{\eta^2}\leq \frac{1}{n\eta^2}\to 0
\end{align*}
for every $\eta>0$.
\end{proof}

\begin{lemma}\label{lemmaB}
 Let $\delta\in(0,1/2)$. As $n \to \infty$, it holds:
\begin{equation*}
\frac{1}{n}\sum_{i \in [n]} \hat{p}_i(\mathbf{W}) -F(\Pi_n)\xrightarrow{\mathbb{P}} 0.
\end{equation*}
\end{lemma}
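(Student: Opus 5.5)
The plan is to exploit the fact that, conditionally on the $\delta$-Core and its weights, the remaining weights are i.i.d. This turns $\frac1n\sum_i \hat p_i$ into an empirical average whose conditional mean is essentially $F(\Pi_n)$.

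\textbf{Step 1: rewrite $\hat p_i$.} Set $t_n:=n^{\frac{1-\delta}{\alpha}}$. For every $i\notin\WW_\delta$, Definition~\ref{hatp} and the product representation \eqref{F} give
\[
\hat p_i(\mathbf W)=\prod_{j\in\WW_\delta}\bigl(1-\min(\varepsilon W_iW_j,1)\bigr)=\psi_{W_i}(\Pi_n),
\]
because the atoms of $\Pi_n$ are exactly the points $\varepsilon W_j$ with $j\in\WW_\delta$. Hence
\[
\frac1n\sum_{i\in[n]}\hat p_i=\frac1n\sum_{i\notin\WW_\delta}\psi_{W_i}(\Pi_n).
\]
The key point is that, for $i\notin\WW_\delta$, the point process $\Pi_n$ does not involve $W_i$.

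\textbf{Step 2: conditional i.i.d.\ structure.} Let $\mathcal G_n$ be the $\sigma$-algebra generated by the random index set $\WW_\delta$ and the weights $(W_j)_{j\in\WW_\delta}$. Then $\Pi_n$ and $N:=n-|\WW_\delta|$ are $\mathcal G_n$-measurable. Since the $W_i$ are i.i.d., conditionally on $\mathcal G_n$ the weights $(W_i)_{i\notin\WW_\delta}$ are i.i.d.\ with the law of $W$ conditioned on $\{W\le t_n\}$. Therefore the summands $\psi_{W_i}(\Pi_n)$, $i\notin\WW_\delta$, are conditionally i.i.d., take values in $[0,1]$, and have conditional mean
\[
G_n(\Pi_n):=\Eb\bigl[\psi_W(\mu)\,\big|\,W\le t_n\bigr]\Big|_{\mu=\Pi_n}.
\]
The conditional variance of $\frac1n\sum_{i\notin\WW_\delta}\psi_{W_i}(\Pi_n)$ is then at most $N/n^2\le 1/n$. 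Conditional Chebyshev followed by averaging, exactly as in Lemma~\ref{lemmaA}, gives
\[
\frac1n\sum_i\hat p_i-\frac Nn\,G_n(\Pi_n)\xrightarrow{\P}0 .
\]

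\textbf{Step 3: remove the corrections.} First, $|\WW_\delta|\sim\mathrm{Bin}(n,n^{\delta-1})$ has mean $n^\delta=o(n)$, so $N/n\to1$ in probability by Markov's inequality. Since $G_n\in[0,1]$, this gives
\[
\Bigl(\frac Nn-1\Bigr)G_n(\Pi_n)\xrightarrow{\P}0 .
\]
Second, for any deterministic $\mu$ and any $[0,1]$-valued $X=\psi_W(\mu)$, with $A=\{W\le t_n\}$, we have
\[
\bigl|\Eb[X\mid A]-\Eb[X]\bigr|\le 2\,\Pb(A^{\mathrm c})=2n^{\delta-1}\to0 ,
\]
uniformly in $\mu$. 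Applying this with $\mu=\Pi_n$ gives $|G_n(\Pi_n)-F(\Pi_n)|\le 2n^{\delta-1}$ deterministically. Combining the two bounds with Step 2 yields the claim.

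\textbf{Main obstacle.} The only delicate point is Step 2: one must check rigorously that conditioning on the random set $\WW_\delta$ together with its weights leaves the complementary weights i.i.d.\ with the truncated Pareto law. The plan is to justify this by partitioning on the possible index sets $S\subseteq[n]$ and using the independence of the $W_i$. On $\{\WW_\delta=S\}$, the weights $(W_i)_{i\notin S}$ are independent of $(W_j)_{j\in S}$, and each has law $\mathcal L(W\mid W\le t_n)$. Once this is in place, everything else reduces to elementary second-moment bounds, and no continuity of $F$ is needed here.
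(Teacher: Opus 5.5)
Your argument is correct, and it reaches the result by a different route than the paper. The paper never conditions on the core. Writing $Z_n=\frac1n\sum_i\hat p_i-F(\Pi_n)$, it computes $\mathbb{E}[Z_n]$ and $\mathbb{E}[Z_n^2]$ unconditionally and reduces them by exchangeability to quantities involving vertices $1$ and $2$. It then decouples $W_1,W_2$ from the point process by replacing $\Pi_n$ with $\hat\Pi_{n-1}$ or $\hat\Pi_{n-2}$ (the core with vertex $1$, resp.\ vertices $1,2$, removed) on the events $\{1\notin\WW_\delta\}$, $\{1,2\notin\WW_\delta\}$. The symmetry $W_1\overset{d}{=}W$ and dominated convergence then show that $\mathbb{E}[\hat p_1\hat p_2]$ and $\mathbb{E}[\hat p_1F(\Pi_n)]$ both equal $\mathbb{E}[F(\Pi_n)^2]+o(1)$, so $\mathbb{E}[Z_n^2]\to0$. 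Your conditioning on $\mathcal G_n$ packages this decoupling into one structural fact. Your ``main obstacle'' is immediate if you take $\mathcal G_n=\sigma\bigl(W_j\ind(W_j>t_n),\,j\in[n]\bigr)$, since the pairs $\bigl(W_j,W_j\ind(W_j>t_n)\bigr)$ are independent across $j$. What your route buys is explicit control in place of a chain of $o(1)$ limits: conditional variance at most $1/n$, plus a deterministic truncation bias $|G_n(\mu)-F(\mu)|\le\Pb(W>t_n)=n^{\delta-1}$ uniformly in $\mu$ (the factor $2$ is not needed). It also makes plain that no property of $\Pi_n$ is used. It extends for free as well. Given $\mathcal G_n$, the indicators $\ind(i\nleftrightarrow\WW_\delta)$, $i\notin\WW_\delta$, are themselves i.i.d.\ Bernoulli$(G_n(\Pi_n))$, because each depends only on $W_i$ and on the edges from $i$ to $\WW_\delta$, which are disjoint for different $i$. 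So the same computation proves Lemma~\ref{lemmaA} and Lemma~\ref{lemmaB} at once. The paper's route, for its part, avoids conditioning on a random index set and uses only exchangeability and elementary moment computations.
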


\begin{proof}
  Conditionally on $\mathbf{W}$, we can write $\hat{p}_i = \hat{p}_i(\mathbf{W})$ as follows:
\begin{align*}
\hat{p}_i &= \ind(i \notin \WW_\delta) \prod_{j \in \WW_\delta} (1 - \min(\varepsilon W_i W_j, 1)) \\
&= \ind(i \notin \WW_\delta) \ind(\Pi_n[W_i^{-1}, \infty) = 0) \exp\left( \int_0^{W_i^{-1}} \ln(1-\eps xW_i) \Pi_n(dx) \right) \\
&= \ind(i \notin \WW_\delta) \psi_{W_i}(\Pi_n).
\end{align*}
Let $W \sim \text{Pareto}(\alpha)$ be an auxiliary random variable and independent of $\Pi_n$. For better readability, let
\begin{equation*}
Z_n = \frac{1}{n} \sum_{i \in [n]} \hat{p}_i(\mathbf{W}) - F(\Pi_n).
\end{equation*}
We have:
\begin{align}\label{A0}
\mathbb{E}[Z_n] &= \frac{1}{n} \sum_{i \in [n]} \mathbb{E}[\hat{p}_i(\mathbf{W})] - \mathbb{E}[F(\Pi_n)] = \mathbb{E}[\hat{p}_1(\mathbf{W})] - \mathbb{E}[F(\Pi_n)] \\
\notag &= \mathbb{E}[\ind(1 \notin \WW_\delta) \psi_{W_1}(\Pi_n)] - \mathbb{E}[\psi_W(\Pi_n)] \\
\notag &= \mathbb{E}[\ind(1 \notin \WW_\delta) (\psi_{W_1}(\Pi_n) - \psi_W(\Pi_n))] - \mathbb{E}[\ind(1 \in \WW_\delta) \psi_W(\Pi_n)] \\
\notag &= A - B.
\end{align}
Since $\ind(1 \in \WW_\delta) = \ind(W_1 \geq n^{\frac{1-\delta}{\alpha}}) \to 0$ for every fixed value of $W_1$ and $\psi_W(\Pi_n) \in (0,1)$, the Dominated Convergence Theorem implies $B \to 0$.
For the term $A$, observe that on the event $\{1 \notin \WW_\delta\}$, the weight $W_1$ does not contribute to the point process $\Pi_n$. We can thus substitute $\Pi_n$ with the point process
\begin{equation*}
\hat{\Pi}_{n-1} = \sum_{\substack{j \in \WW_\delta \\ j \geq 2}} \delta_{\varepsilon W_j},
\end{equation*}
which is independent of both $W_1$ and $W$. We can then write:
\begin{align*}
A &= \mathbb{E}[\ind(1 \notin \WW_\delta)(\psi_{W_1}(\hat{\Pi}_{n-1}) - \psi_W(\hat{\Pi}_{n-1}))] \\
&= \mathbb{E}[\psi_{W_1}(\hat{\Pi}_{n-1}) - \psi_W(\hat{\Pi}_{n-1})] - \mathbb{E}[\ind(1 \in \WW_\delta)(\psi_{W_1}(\hat{\Pi}_{n-1}) - \psi_W(\hat{\Pi}_{n-1}))] \\
&= A_1 - A_2 \to 0,
\end{align*}
where $A_1 = 0$ by symmetry and $A_2 \to 0$ by the Dominated Convergence Theorem. Thus, $\mathbb{E}[Z_n] \to 0$. 
  We now consider the second moment to establish concentration:
\begin{align}\label{Z_n^2}
\mathbb{E}[Z_n^2] &= \frac{1}{n^2} \sum_{i \in [n]} \mathbb{E}[\hat{p}_i^2] + \frac{1}{n^2} \sum_{i \neq j} \mathbb{E}[\hat{p}_i \hat{p}_j] - \frac{2}{n} \sum_{i \in [n]} \mathbb{E}[\hat{p}_i F(\Pi_n)] + \mathbb{E}[F(\Pi_n)^2] \\
\notag&= \frac{\mathbb{E}[\hat{p}_1^2]}{n} + \frac{n(n-1)}{n^2} \mathbb{E}[\hat{p}_1 \hat{p}_2] - 2\mathbb{E}[\hat{p}_1 F(\Pi_n)] + \mathbb{E}[F(\Pi_n)^2] \\
\notag&= o(1) + \mathbb{E}[\hat{p}_1 \hat{p}_2] - 2\mathbb{E}[\hat{p}_1 F(\Pi_n)] + \mathbb{E}[F(\Pi_n)^2].
\end{align}
Proceeding exactly as we did for the first moment, we can decouple the dependencies in the cross-terms by substituting $\Pi_n$ with the restricted point process $\hat{\Pi}_{n-2} = \sum_{j \in \mathcal{W}_\delta, j \notin \{1,2\}} \delta_{\varepsilon W_j}$ (which is independent of both $W_1$ and $W_2$) in the set $\mathbb{1}(1,2 \notin \mathcal{W}_\delta)$, while the probability of the complement vanishes by the Dominated Convergence Theorem. So we obtain $\mathbb{E}[\hat{p}_1\hat{p}_2] = \mathbb{E}[F(\Pi_n)^2] + o(1)$. 
By the exact same argument using $\hat{\Pi}_{n-1}$, we also have $\mathbb{E}[\hat{p}_1 F(\Pi_n)] = \mathbb{E}[F(\Pi_n)^2] + o(1)$. 
Substituting these limits back into the expansion, we get:
\begin{equation*}
    \mathbb{E}[Z_n^2] = o(1) + \mathbb{E}[F(\Pi_n)^2] - 2\mathbb{E}[F(\Pi_n)^2] + \mathbb{E}[F(\Pi_n)^2] = o(1).
\end{equation*}
It follows that $\mathbb{V}(Z_n) \to 0$, hence by Chebyshev's inequality, we have that $|Z_n|\to 0$ in probability as $n\to\infty$.
\end{proof}

Finally we are able to address the following lemma, that characterizes the random limit of $|\widebar{\WW}_\delta|/n$. Recall the PPP $\Pi$ from Definition~\ref{def:PPP_limit}.

\begin{lemma}\label{lemmaginatsize}
 Let $\delta \in (0,1/2)$, then it holds
\begin{equation*}
\frac{|\widebar{\WW}_\delta|}{n}\xrightarrow{\text{ }d\text{ }} 1-\Xi
\end{equation*}
where
\begin{equation*}
\Xi = \Eb\left[\prod_{z \in \text{supp}(\Pi)} (1 - \min(Wz,1))  \;\middle|\;\Pi \right],
\end{equation*}
for $W\sim Pareto(\alpha)$.
\end{lemma}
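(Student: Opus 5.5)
The plan is to chain the three approximations already established. First, $|\widebar{\WW}_\delta|/n = 1 - |\widebar{\WW}_\delta^{\cc}|/n$, so it suffices to show $|\widebar{\WW}_\delta^{\cc}|/n \xrightarrow{d} \Xi$. By Lemma~\ref{lemmaA}, $|\widebar{\WW}_\delta^{\cc}|/n - \frac1n\sum_i \hat p_i(\mathbf W) \xrightarrow{\P} 0$. By Lemma~\ref{lemmaB}, $\frac1n\sum_i \hat p_i(\mathbf W) - F(\Pi_n)\xrightarrow{\P}0$. Adding these, $|\widebar{\WW}_\delta^{\cc}|/n - F(\Pi_n)\xrightarrow{\P}0$.

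Next, by Lemma~\ref{continuitàdiF}, $F(\Pi_n)\xrightarrow{d}F(\Pi)$. By Slutsky's theorem (a sequence differing by a term tending to $0$ in probability from a sequence converging in distribution has the same limit), we get $|\widebar{\WW}_\delta^{\cc}|/n\xrightarrow{d}F(\Pi)$. The continuous mapping $x\mapsto 1-x$ then yields $|\widebar{\WW}_\delta|/n\xrightarrow{d}1-F(\Pi)$.

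It remains to identify $F(\Pi)$ with $\Xi$. By \eqref{F}, $F(\mu)=\Eb\bigl[\prod_{x\in\mathrm{supp}(\mu)}(1-\min(xW,1))\bigr]$, with $W$ a Pareto variable independent of $\mu$. Evaluating at the random measure $\Pi$, with $W$ independent of $\Pi$, is exactly the conditional expectation given $\Pi$. To justify this, note that the integrand $(\mu,w)\mapsto\psi_w(\mu)$ is jointly measurable and bounded in $[0,1]$, so Fubini (the freezing/substitution lemma for independent variables) gives $F(\Pi)=\Eb[\psi_W(\Pi)\mid\Pi]=\Xi$ almost surely. A minor point is the atom at $\infty$ allowed in $(0,\infty]$: $\Pi$ has no atom at $\infty$ a.s., since $\nu$ has no mass there. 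In any case, such an atom would force the product to vanish, consistently in both expressions.

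The main obstacle is not in this assembly, which is routine once Lemmas~\ref{lemmaA}, \ref{lemmaB} and \ref{continuitàdiF} are available. The real work sits in the continuity argument for $F$, which is already handled by Lemmas~\ref{B0}, \ref{B1} and \ref{B2}. The only care needed here is to apply Slutsky correctly: the in-probability error terms are combined first, and only then is convergence in distribution invoked, so no joint convergence of $(\Pi_n,\mathbf W)$ is required.
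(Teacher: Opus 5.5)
Your proposal is correct and follows the same route as the paper's proof. Both pass to the complement $|\widebar{\WW}_\delta^{\cc}|/n$, use Lemmas~\ref{lemmaA} and \ref{lemmaB} to reduce to $F(\Pi_n)$, combine the convergence $F(\Pi_n)\xrightarrow{d}F(\Pi)$ with Slutsky's theorem, and identify $F(\Pi)=\Xi$ via \eqref{F}. Your extra care (adding the in-probability errors before invoking Slutsky, and justifying $F(\Pi)=\Eb[\psi_W(\Pi)\mid\Pi]$ by the freezing lemma) only makes explicit what the paper leaves implicit.
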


\begin{proof}[Proof of Lemma \ref{lemmaginatsize}]
To prove that $|\widebar{\WW}_\delta|/n \xrightarrow{d} 1 - \Xi$, we work with its complement and instead prove
\begin{equation*} \label{finale}
\frac{|\widebar{\WW}_\delta^\cc|}{n} \xrightarrow{d} \Xi,
\end{equation*}
since $|\widebar{\WW}_\delta^\cc| = n - |\widebar{\WW}_\delta|$ clearly holds. From Lemma \ref{lemmaA} we know that
\begin{equation*}
\frac{|\widebar{\WW}_\delta^\cc|}{n} - \frac{1}{n} \sum_{i \in [n]} \hat{p}_i(\mathbf{W}) \xrightarrow{\mathbb{P}} 0.
\end{equation*}
This essentially states that the study of the convergence in distribution of $|\widebar{\WW}_\delta^\cc|/n$ can be reduced to that of $\frac{1}{n}\sum \hat{p}_i$. Indeed, by Slutsky's Theorem, if we show that $\frac{1}{n}\sum\hat{p}_i \xrightarrow{d} \Xi$, it then follows that $|\widebar{\WW}_\delta^\cc|/n\xrightarrow{d} \Xi$ as well. Moreover, with Lemma \ref{lemmaB} we also have that to address $\frac{1}{n}\sum \hat{p}_i$ one can study $F(\Pi_n)$, since
\begin{equation*}
\frac{1}{n} \sum_{i \in [n]} \hat{p}_i(\mathbf{W}) - F(\Pi_n) \xrightarrow{\mathbb{P}} 0.
\end{equation*}
This relies on the fact that, conditionally on the configuration of $\WW_\delta$, the variables $\hat{p}_i$ behave approximately as independent, so that an averaging effect, present in the definition of $F$, takes place. Finally, in Subsection \ref{convergenzafunzionali} (see Lemma \ref{continuitàdiF}) we have proved 
\begin{equation*}
F(\Pi_n) \xrightarrow{d} F(\Pi) =: \Xi.
\end{equation*}
Combining the previous steps via Slutsky's Theorem one gets
\begin{equation*}
\frac{|\widebar{\WW}_\delta^\cc|}{n}\xrightarrow{d} F(\Pi).
\end{equation*}
From \eqref{F} it is clear that $\Xi=F(\Pi)$ can be expressed also as
\begin{equation*}
\Xi = \Eb\left[\prod_{z \in \text{supp}(\Pi)} (1 - \min(Wz,1))  \;\middle|\;\Pi \right].
\end{equation*}
\end{proof}

\subsection{Proof of Theorem \ref{thm:giantsize}}\label{finallyherethetheorem}
Let $\WW_\delta$ and $\widebar{\WW}_\delta$ as in Definition \ref{core_delta} and \ref{Ndelta}.

\begin{proof}[Proof of Theorem \ref{thm:giantsize}]
 Lemma \ref{coredeltaexists} ensures that $\WW_\delta$ is non-empty w.h.p. As $n\to \infty$, from Lemma \ref{lemmaginatsize} we know that
 \begin{equation*}
\frac{|\widebar{\WW}_\delta|}{n}\xrightarrow{d} 1-\Xi
\end{equation*}
where
\begin{equation*}
\Xi = \Eb\left[\prod_{z \in \text{supp}(\Pi)} (1 - \min(Wz,1))  \;\middle|\;\Pi \right].
\end{equation*}
 Moreover, Corollary \ref{isolatedornot2} shows
 \begin{equation*}
    \frac{|\mathcal{C}_{max}|}{n}-\frac{|\widebar{\WW}_\delta|}{n}\xrightarrow{\P}0.
\end{equation*}
Thus, by Slutsky's Theorem, it holds
\begin{equation*}
    \frac{|\mathcal{C}_{max}|}{n}\xrightarrow{d} 1-\Xi.
\end{equation*}
It worth noticing that, since 
\begin{equation*}
    \frac{\mathbb{E}[|\mathcal{C}_{max}|]}{n}\to 1-\xi_\lambda
\end{equation*} (see Corollary \ref{isolatedornot2}) and $|\mathcal{C}_{max}|/n\leq 1$, it trivially holds
\begin{equation*}
    \Eb[\Xi]=\xi_\lambda
\end{equation*}
with $\xi_\lambda = \mathbb{E}\left[e^{-\frac{\lambda}{1-\alpha}W^\alpha}\right] = \frac{\lambda}{1-\alpha} \Gamma\left(-1, \frac{\lambda}{1-\alpha}\right)$. 
\end{proof}

\section{Infinite Mean: No sharp Connectivity Threshold. Proof of Theorem \ref{thm:nosharp}}\label{sec:nosharp}

In this section, we present the proof of Theorem~\ref{thm:nosharp} as a direct consequence of Theorem~\ref{thm:giantsize}. In particular, the probability of connection is controlled by the probability of having a hub, i.e. $\HH_\varepsilon \neq \emptyset$, which immediately leads us to $\text{diam}(\mathbb{G}_n(\alpha,\varepsilon))\leq 2$. We refer to notation introduced in Section~\ref{sec:modeldescription}.

\begin{proof}[Proof of Theorem \ref{thm:nosharp}]
By Proposition \ref{core_bounds}, we already know that for a fixed $n$ it holds
\begin{equation*}
    \mathbb{P}(\mathbb{G}_n(\alpha, \varepsilon) \text{ is connected}) \geq \mathbb{P}(\text{diam}(\mathbb{G}_n(\alpha, \varepsilon)) \leq 2)  \geq \mathbb{P}(\HH_\varepsilon \neq \emptyset).
\end{equation*}
Moreover it holds $|\HH_{\varepsilon}|\sim \text{Bin}(n,\varepsilon^\alpha)$ and, in particular,
\begin{equation*}
\mathbb{P}(|\HH_{\varepsilon}|=0)=\left(1-\frac{\lambda}{n}\right)^n\to e^{-\lambda}.
\end{equation*}
Thus, 
\begin{equation}\label{liminf}
 \liminf_{n\to\infty}\mathbb{P}(\mathbb{G}_n(\alpha,\varepsilon)\text{ is connected})\geq 1-e^{-\lambda}.   
\end{equation}
Now we look for a good upper bound. By definition, the graph $\mathbb{G}_n(\alpha,\varepsilon)$ is connected if and only if its largest connected component contains all $n$ vertices, i.e., $|\mathcal{C}_{\max}| = n$. Therefore
\begin{equation*}
    \{\mathbb{G}_n(\alpha,\varepsilon) \text{ is connected}\} = \left\{ \frac{n - |\mathcal{C}_{\max}|}{n} = 0 \right\}.
\end{equation*}
Recall that $\frac{n - |\mathcal{C}_{\max}|}{n} \xrightarrow{d} \Xi$ as $n \to \infty$ from Theorem \ref{thm:giantsize}. Since $\{0\}$ is a closed subset of $\mathbb{R}$, applying Portmanteau's Theorem \cite[Theorem 2.1]{billingsley2013convergence} yields
\begin{equation}\label{limsup}
    \limsup_{n\to\infty}\mathbb{P}(\mathbb{G}_n(\alpha,\varepsilon) \text{ is connected}) = \limsup_{n\to\infty}\mathbb{P}\left(\frac{n - |\mathcal{C}_{\max}|}{n} = 0\right) \le \Pb(\Xi=0),
\end{equation}
where 
\begin{equation*}
    \Xi = \Eb\left[\prod_{z \in \text{supp}(\Pi)} (1 - \min(Wz,1)) \;\middle|\;\Pi \right]
\end{equation*} for $W \sim \text{Pareto}(\alpha)$. We now show that a.s. $\left\{ \Xi=0\right\}=\left\{\exists z \in \text{supp}(\Pi) : z\geq 1 \right\}$. Trivially, if $\exists z \geq 1$, $z \in \text{supp}(\Pi)$, then $\min(Wz,1)\geq \min(z,1)=1$ and thus $\Xi=0$. Otherwise, let us assume by contradiction that $M:=\max(z \in \text{supp}(\Pi) )<1$ and show that $\Xi>0$ a.s.. We have
\begin{align*}
    \Xi&=\Eb\left[ \prod_{z \in \text{supp}(\Pi)}(1-\min(Wz,1)) \Biggl| \Pi     \right]\\
    &\geq\int_{1}^{1/M} \prod_{z \in \text{supp}(\Pi)}(1-\min(uz,1))f_W(du)\\
    &=\int_1^{1/M}e^{-\sum\limits_{z \in \text{supp}(\Pi)}-\ln(1-uz)}f_W(du)\\
    &\geq \int_1^{1/M}e^{-\sum\limits_{z \in \text{supp}(\Pi)} \frac{uz}{1-uz}}f_W(du)\\
    &\geq \int_1^{1/M}e^{-\frac{uA}{1-uM}}f_W(du)
\end{align*}
where we have used the inequality $-\ln(1-x)\leq x/(1-x)$ for $x<1$ and introduced
\begin{equation*}
    A:=\sum_{z \in \text{supp}(\Pi)}z.
\end{equation*}
We observe that the term $A$ is finite a.s. since
\begin{align*}
    \Eb[A]=\int_0^1 z \nu(dz)=\lambda \alpha \int_0^1 z^{-\alpha}dz=\frac{\lambda\alpha}{1-\alpha}<\infty.
\end{align*}
Now, let us introduce $1<u_0<1/M$. Thus, a.s. it holds
\begin{align*}
    \Xi&\geq \int_1^{1/M}e^{-\frac{uA}{1-uM}}f_W(du)\\
    &\geq \int_1^{u_0}e^{-\frac{uA}{1-uM}}f_W(du)\\
    &\geq e^{-\frac{u_0A}{1-u_0M}}\int_1^{u_0}f_W(du)\\
    &=e^{-\frac{u_0A}{1-u_0M}}\Pb(1\leq W\leq u_0)>0.
\end{align*}
That is a contradiction. Therefore, we get
\begin{align*}
    \Pb(\Xi=0)=\Pb(\exists z \in \text{supp}(\Pi): z\geq 1)=1-\Pb(\Pi[1,\infty]=0)=1-e^{-\lambda},
\end{align*}
where $\Pi[1,\infty]\sim \text{Poisson}(\lambda)$ is the number of atoms of $\Pi$ in the interval $[1,\infty]$. Thus, from \eqref{limsup} we also have
\begin{equation*}
    \limsup_{n\to\infty}\mathbb{P}(\mathbb{G}_n(\alpha,\varepsilon)\text{ is connected})\leq 1-e^{-\lambda}
\end{equation*}
that together with \eqref{liminf} yields the desired result. 
\end{proof}

\section{Infinite mean: second largest component and distances}\label{sec:2nd_component_distance}
\subsection{Second largest component: proof of Theorem \ref{thm:2nd_component}}\label{sec:2nd_component}

  In this section, we present the proof of Theorem \ref{thm:2nd_component}. We recall from Definitions \ref{core_delta} and \ref{Ndelta} that
\begin{equation*}
    \WW_\delta=\left\{i \in [n]: W_i>n^{\frac{1-\delta}{\alpha}} \right\}\quad\quad \text{and}\quad\quad
\widebar{\WW}_\delta := \{ i \in [n] : i \in \WW_\delta \text{ or } \exists j \in \WW_\delta \text{ s.t. } i \leftrightarrow j \}.
\end{equation*}
If $S\subseteq [n]$, let $G[S]$ be the subgraph of $\mathbb{G}_n(\alpha,\varepsilon)$ induced by the vertex set $S$.

\begin{proof}[Proof of Theorem \ref{thm:2nd_component}]
  Let $\delta \in (0,1/2)$. Lemma \ref{coredeltaexists} ensures that $\WW_\delta$ is a non-empty clique w.h.p.. For $k\in \N$, we introduce the random variable
\begin{equation} \label{X_K}
    X_k := \sum_{S \subset [n] \atop |S|=k} \ind(G[S] \text{ is connected and } S \cap \widebar{\WW}_\delta = \emptyset) 
\end{equation}
to count the number of distinct sets of $k$ vertices that are not directly connected to $\WW_\delta$ but form a connected subgraph. From the proof of Corollary \ref{isolatedornot2}, it is clear that $\widebar{\WW}_\delta \subseteq \mathcal{C}_{max}$. Since any connected component of size at least $k$ contains a connected subgraph of size exactly $k$, it holds $\{|C_{(2)}| \ge k\} \subseteq \{X_k \ge 1\}$. Our goal is to show that for any $k > (1-\alpha)^{-1}$, $X_k = 0$ w.h.p. This clearly implies that 
\begin{equation}\label{D}
\mathbb{P}(|\mathcal{C}_{(2)}|\geq k)\to 0,
\end{equation}
too. To prove that $X_k = 0$ w.h.p., it suffices to show that $\mathbb{E}[X_k] \to 0$ as $n \to \infty$, since by Markov's inequality we have $\mathbb{P}(X_k \geq 1) \leq \mathbb{E}[X_k]$. By weights exchangeability, we can write
\begin{align}\label{mean_final}
    \mathbb{E}[X_k] = \binom{n}{k} \mathbb{P}(G[1, \dots, k] \text{ is connected and } \{1, \dots, k\} \cap \widebar{\WW}_\delta = \emptyset)=\binom{n}{k}\hat{p},
\end{align}
where $\hat{p}$ denotes the latter probability. Let $\tilde{W} = \max(W_1, \dots, W_k)$ and $\tilde{i} \in [1,\dots, k]$ such that $W_{\tilde{i}}=\tilde{W}$. The probability that no vertex in the set $[k]$ is connected to the core $\WW_\delta$ is bounded by the probability that the vertex $\tilde{i}$ with the maximum weight $\tilde{W}$ is not connected. In particular, conditioning on $W_1,\dots, W_k$ we can write
\begin{align*}
    \mathbb{E}_{W_1,\dots,W_k}[\mathbb{1}(1,\dots,k\notin \widebar{\WW}_\delta)]&=\mathbb{P}(1,\dots,k \notin \widebar{\WW}_\delta \mid W_1,\dots, W_k)\\
    &<  \mathbb{P}(\tilde{i}\notin \widebar{\WW}_\delta \mid W_1,\dots, W_k)\\
    \notag&=  \ind(\tilde{i} \notin \WW_\delta)\mathbb{E}\left[\prod_{j \geq k+1} 1-\min(\varepsilon \tilde{W} W_j,1)\ind(j \in \WW_\delta)\mid \tilde{W} \right]\\
    \notag&\leq \mathbb{E}\left[ 1-\min(\varepsilon \tilde{W} W_{k+1},1)\ind(W_{k+1}>n^{\frac{1-\delta}{\alpha}})\mid \tilde{W} \right]^{n-k}\\
    \notag&\leq\left(1-c_{\lambda,\alpha}\frac{\tilde{W}^\alpha}{n} \right)^{n-k}
    \leq e^{-d_{\lambda,\alpha}\tilde{W}^\alpha}
\end{align*}
where in the second to last step we have used the same computation as in \eqref{bla} and introduced suitable positive constants $c_{\lambda,\alpha}>0$ and $d_{\lambda,\alpha}>0$.
Thus, it holds
\begin{align*}
    \mathbb{E}\left[\tilde W^{2(k-1)}\mathbb{E}_{W_1,\dots,W_k}[\mathbb{1}(1,\dots,k \notin\widebar{\WW}_\delta)]\right] \leq \mathbb{E}[\tilde{W}^{2(k-1)}e^{-d_{\lambda,\alpha} \tilde{W}^\alpha}]=C<\infty
\end{align*}
since the exponential decay of $e^{-d_{\lambda,\alpha}\tilde{W}^\alpha}$ dominates the polynomial tail of the maximum $\tilde{W}$. Now, let $\mathcal{T}_k$ be the set of all possible trees on $k$ vertices. Since $\{G[1, \dots, k] \text{ is connected}\} = \bigcup_{T \in \mathcal{T}_k} \{T \subseteq G[1, \dots, k]\}$, by applying a union bound and using the fact that $|\mathcal{T}_k| = k^{k-2}$ (Cayley's Formula), we obtain:
\begin{align*}
  \hat{p} &=\mathbb{E}\left[ \ind(G[1,\dots, k ]\text{ is connected}) \ind(1,\dots,k \notin \widebar{\WW}_\delta)\right]\\
    &\leq  \sum_{T \in \mathcal{T}_k} \mathbb{E}\left[ \prod_{(i,j) \in E(T)} \min(\varepsilon W_iW_j,1) \mathbb{E}_{W_1,\dots,W_k}\left[\ind(1,\dots,k \notin\widebar{\WW}_\delta)\right]\right]\\
    &\leq \sum_{T \in \mathcal{T}_k} \mathbb{E}\left[  \varepsilon^{k-1} \tilde{W}^{2(k-1)}\mathbb{E}_{W_1,\dots,W_k}\left[\ind(1,\dots,k \notin\widebar{\WW}_\delta)\right]\right]\\
    &= k^{k-2}\varepsilon^{k-1} \mathbb{E}\left[   \tilde{W}^{2(k-1)}\mathbb{E}_{W_1,\dots,W_k}\left[\ind(1,\dots,k \notin\widebar{\WW}_\delta)\right]\right],
\end{align*}
that brings to
\begin{align*}
\mathbb{E}[X_k]\leq n^kk^{k-2}\varepsilon^{k-1}\mathbb{E}[\tilde{W}^{2(k-1)}\mathbb{E}_{W_1,\dots,W_k}\left[\mathbb{1}(W_1,\dots,W_k \notin \widebar{\WW}_\delta)]\right]\leq k^{k-2}C\lambda^{\frac{k-1}{\alpha}} \cdot n^{k-\frac{k-1}{\alpha}}.
\end{align*}
As $n \to \infty$, the right-hand side vanishes if $k - \frac{k-1}{\alpha} < 0$, which is equivalent to $k > (1-\alpha)^{-1}$. This brings to \ref{D} and concludes the proof.
\end{proof}

\subsection{Extreme ultra small world phenomenon: proof of Proposition \ref{thm:extremeultrasmallworld}} \label{sec:extremeultrasmallworld}

In this section we prove Proposition \ref{thm:extremeultrasmallworld} regarding the extreme ultra small world nature of the infinite mean regime in $\mathbb{G}_n(\alpha,\varepsilon)$. We show that the diameter in the giant component of $\mathbb{G}_n(\alpha,\varepsilon)$ is bounded by a constant w.h.p. and that the typical distance is at most $3$ w.h.p.

\begin{proof}[Proof of Proposition \ref{thm:extremeultrasmallworld}]
Let $\delta \in (0,1/2)$. From Lemma \ref{coredeltaexists}, $\mathcal{W}_\delta$ is not empty w.h.p. We first show that the typical distance in the giant component is at most $3$. Let $u$ and $v$ be two vertices chosen uniformly at random from $\Cmax$. Given a realisation $G_n$ of the graph, it holds:
\begin{equation*}
    \mathbb{P}(u, v \in \widebar{\mathcal{W}}_\delta \mid {G}_n) = \left( \frac{|\Cmax \cap \widebar{\mathcal{W}}_\delta|}{|\Cmax|} \right)^2 = \left( 1 - \frac{|\Cmax \setminus \widebar{\mathcal{W}}_\delta|}{|\Cmax|} \right)^2.
\end{equation*}
Theorem \ref{thm:giantsize} ensures that $|\mathcal{C}_{max}|/n\xrightarrow{d}1-\Xi$ and Corollary \ref{isolatedornot2} ensures that $|\Cmax \setminus \widebar{\WW}_\delta| / n \xrightarrow{\mathbb{P}} 0$. Thus, both $u$ and $v$ belong to $\widebar{\WW}_\delta$ w.h.p., since
\begin{equation*}
\frac{|\mathcal{C}_{max}\setminus \widebar{\mathcal{W}}_\delta|}{|\mathcal{C}_{max}|}=\frac{|\mathcal{C}_{max}\setminus \widebar{\mathcal{W}}_\delta|}{n}\cdot \frac{n}{|\mathcal{C}_{max}|}\xrightarrow{\text{ }\mathbb{P}\text{ }}0.
\end{equation*}
By Definition \ref{Ndelta}, this means there exist vertices $u', v' \in \mathcal{W}_\delta$ such that $u\leftrightarrow u'$ and $v\leftrightarrow v'$ (where possibly $u = u'$ or $v = v'$). Moreover, by Lemma \ref{coredeltaexists}, $\mathcal{W}_\delta$ forms a clique w.h.p., implying that the edge $\{u', v'\}$ is present. Thus, the path $u \leftrightarrow u' \leftrightarrow v' \leftrightarrow v$ exists, proving that the distance between $u$ and $v$ is at most $3$ w.h.p.

Lastly, we want to control the diameter.  We consider the vertices in $\Cmax \setminus \overline{\mathcal{W}}_\delta$. Let $X_k$ as in \eqref{X_K} for $k\geq 1$ and let $K = \lfloor (1-\alpha)^{-1} \rfloor$.  From the proof of Theorem \ref{thm:2nd_component}, $X_{K+1} = 0$ w.h.p. Since every vertex $u \in \Cmax \setminus \overline{\mathcal{W}}_\delta$ must eventually connect to $\widebar{\WW}_\delta$ to belong to the giant component, it must be part of a "branch" of size at most $K$ attached to $\overline{\mathcal{W}}_\delta$. 
Consequently, the maximum distance from any vertex $u \in \Cmax \setminus \overline{\mathcal{W}}_\delta$ to the set $\overline{\mathcal{W}}_\delta$ is bounded by $K$. This implies that any vertex in the giant component is at distance at most $K+1$ from the clique $\mathcal{W}_\delta$. Since the distance between any two distinct vertices within the clique is exactly $1$, the maximum distance between any pair of vertices $u,v \in \Cmax$ is bounded by $(K+1) + 1 + (K+1) = 2K + 3$.
\end{proof}

\section{Infinite Mean: Extreme Degrees. Proof of Theorem \ref{thm:maxdegree}}\label{sec:maxdegree}

In this section we present the proof of Theorem \ref{thm:maxdegree}. We recall that $D_n(i)$ indicates the degree of the vertex $i$ and that
\begin{equation*}
    \WW_\delta=\left\{i \in [n]: W_i>n^{\frac{1-\delta}{\alpha}} \right\}
\end{equation*}
is the set of weak hubs as introduced in Definition \ref{core_delta}. Lemma \ref{coredeltaexists} specifies that $\mathcal{W}_\delta$ is a non-empty clique if $\delta \in (0,1/2)$. Moreover, we recall, from Section \ref{sec:giantsize}, that  $\Pi_n:=\sum_{i \in \mathcal{W}_\delta}\delta_{\varepsilon W_i}$ converges weakly to $\Pi$ in $\mathcal{M}_p((0,\infty])$.
To start, we introduce the following definitions.

\begin{definition}
Let $\mathbb{G}_n(\alpha,\varepsilon)$ with $\alpha<1$ and $\varepsilon^\alpha=\lambda/n$ with $\lambda \in (0,\infty)$. Let $\delta \in (0,1/2)$. Let $k \in \mathbb{N}$, $k\geq 1$. We write 
\begin{equation*}
    W_{(1)} > W_{(2)} > \dots > W_{(k)}
\end{equation*} to indicate the k largest weights in $\mathbf{W}=(W_i)_{i \in [n]}$ and, for $u \in [k]$, we write $v_u$ to indicate the vertex with the $u$-th largest weight, i.e. $W_{v_u}=W_{(u)}$.
Similarly, we write 
\begin{equation*}
    Z_{(1)}>Z_{(2)}>\dots >Z_{(k)}
\end{equation*} to indicate the k largest points in $\text{supp}(\Pi)$. 
\end{definition}

The latter are well defined almost surely. In fact, as $\nu((0,\infty])=\infty$ and $\nu$ is absolutely continuous with respect to the Lebesgue measure, $\Pi$ is almost surely an infinite and simple point process (see, e.g., \cite{resnick1987extreme}, Section $3.3.1$ and Exercise $3.3.3$), ensuring its atoms are distinct.

\begin{definition}
Similarly we write 
\begin{equation*}
    \Delta_{(1)}\geq \Delta_{(2)}\geq \dots\geq \Delta_{(k)}
\end{equation*} to indicate the k largest degrees in $\mathbb{G}_n(\alpha,\varepsilon)$. 
\end{definition}

Letting $W$ independent of $(W_i)_{i\in[n]}$, we also recall that the function 
\begin{equation*}
h(x):=\mathbb{E}[\min(Wx,1)]=
\begin{cases}
         \frac{x^{\alpha}-\alpha x}{1-\alpha}   &x < 1;\\
         1   &x\geq 1
        \end{cases}
\end{equation*}
as defined in Definition~\ref{h}, represents the asymptotic probability that a typical vertex $W$ is linked to a point $x>0$. We notice that it directly follows by the computation in Appendix (see Corollary \ref{expectedprobgivenaweight} using $\varepsilon=1$).

To prove Theorem \ref{thm:maxdegree}, we rely on two preliminary lemmas. In the first one (Lemma~\ref{lem:uniform_concentration}), we establish the uniform concentration of the rescaled node degrees around their expected values $h(\varepsilon W_i)$. In the second one (Lemma~\ref{hofinitoinomi}), we derive the joint limiting distribution of the top $k$ extreme degrees and show that, with high probability, they preserve the ordering of the largest vertex weights.

\begin{lemma}
\label{lem:uniform_concentration}
For any $\eta > 0$, as $n \to \infty$,
$$
\mathbb{P}\left( \max_{1 \le i \le n} \left| \frac{D_n(i)}{n} - h(\varepsilon W_i) \right| > \eta \right) \le 2n e^{-2(n-1)\eta^2} \longrightarrow 0.
$$
In particular, for any sequence of vertices $v_u$ corresponding to the $u$-th largest weight $W_{(u)}$ with $1 \le u \le k$,
$$
\frac{D_n(v_u)}{n} - h(\varepsilon W_{(u)}) \xrightarrow{\mathbb{P}} 0.
$$
\end{lemma}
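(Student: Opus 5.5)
Conditionally on the weights $\mathbf{W}$, the degree $D_n(i)=\sum_{j\neq i}a_{ij}$ is a sum of $n-1$ independent Bernoulli variables with parameters $\min(\varepsilon W_iW_j,1)$, $j\neq i$. These are not identically distributed, and the $W_j$ are themselves random. I would therefore condition more cleverly: fix $i$ and condition only on $W_i=w$. Then the pairs $(W_j,a_{ij})_{j\neq i}$ are i.i.d., so the indicators $a_{ij}$, $j\neq i$, are i.i.d.\ Bernoulli variables. Their common parameter is obtained by integrating out $W_j$:
\[
\mathbb{E}[\min(\varepsilon wW_j,1)]=h(\varepsilon w),
\]
which is exactly Definition~\ref{h} with $x=\varepsilon w$ (Corollary~\ref{expectedprobgivenaweight}). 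Hence, given $W_i=w$, we have $D_n(i)\sim\mathrm{Bin}(n-1,h(\varepsilon w))$.

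The next step is to apply Hoeffding's inequality to this binomial:
\[
\mathbb{P}\Bigl(\Bigl|\tfrac{D_n(i)}{n-1}-h(\varepsilon w)\Bigr|>\eta\ \Big|\ W_i=w\Bigr)\le 2e^{-2(n-1)\eta^2}.
\]
The bound is uniform in $w$, so integrating over $W_i$ gives the same bound unconditionally. A union bound over $i\in[n]$ then yields the factor $2n$.

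The normalisation by $n$ instead of $n-1$ costs a deterministic error of at most $D_n(i)/(n(n-1))\le 1/n$. Similarly, the $(n-1)/n$ factor in front of $h$ costs at most $h/n\le 1/n$. Both are absorbed by replacing $\eta$ with $\eta-2/n$, or equivalently by proving the bound for $\eta/2$ and using that $2/n<\eta/2$ eventually. This gives the displayed inequality up to the constant in the exponent, which is what the statement needs (it only matters that $2ne^{-c n\eta^2}\to0$). If one wants the exact constant, I would state Hoeffding directly for $D_n(i)/n$, whose centering is $\tfrac{n-1}{n}h$, and handle the $O(1/n)$ shift as above.

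For the second claim, the vertex $v_u$ is random and depends on $\mathbf{W}$, so one cannot apply the single-vertex bound directly. Instead, I observe that
\[
\Bigl|\tfrac{D_n(v_u)}{n}-h(\varepsilon W_{(u)})\Bigr|\le\max_i\Bigl|\tfrac{D_n(i)}{n}-h(\varepsilon W_i)\Bigr|,
\]
and the right-hand side is handled by the uniform bound, so convergence in probability follows.

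The only real subtlety, which I expect to be the main point to get right, is justifying the conditional i.i.d.\ structure given $W_i$ alone. The edges $a_{ij}$ for different $j$ share the weight $W_i$ but otherwise depend only on independent $W_j$ and independent edge randomness. This is why conditioning on $W_i$, rather than on all of $\mathbf{W}$, produces an exactly binomial degree and avoids having to prove concentration of the weight sums.
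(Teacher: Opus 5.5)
Your proposal is correct and follows the paper's proof essentially step by step. You condition on $W_i$ alone to get $D_n(i)\mid W_i\sim\mathrm{Bin}(n-1,h(\varepsilon W_i))$, apply Hoeffding, integrate out $W_i$, take a union bound over $i$, and absorb the $O(1/n)$ cost of normalising by $n$ instead of $n-1$. Your treatment of that normalisation, which correctly notes that it only gives the bound up to the constant in the exponent, and your explicit domination of the randomly indexed vertex $v_u$ by the maximum are slightly more careful than the paper's one-line remarks.
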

\begin{proof}
Fix a vertex $i \in [n]$ and condition on its weight $W_i$. For each $j \neq i$, the conditional probability that an edge exists between $i$ and $j$ given $W_i$ is
$$
p_{ij} := \mathbb{P}(i \leftrightarrow j \mid W_i) = \mathbb{E}_{W_j}[\min(1, \varepsilon W_i W_j)] = h(\varepsilon W_i).
$$
Consequently,
$$
D_n(i) \mid W_i \sim \text{Bin}\Big(n-1, h(\varepsilon W_i)\Big).
$$
Applying Hoeffding's inequality yields, for any $\eta > 0$,
$$
\mathbb{P}\left( \left| \frac{D_n(i)}{n-1} - h(\varepsilon W_i) \right| > \eta \;\middle|\; W_i \right) \le 2 e^{-2(n-1)\eta^2}.
$$
Taking expectations with respect to $W_i$ implies that the unconditional probability satisfies the exact same bound:
$$
\mathbb{P}\left( \left| \frac{D_n(i)}{n-1} - h(\varepsilon W_i) \right| > \eta \right) \le 2 e^{-2(n-1)\eta^2}.
$$
Applying a union bound over all $n$ vertices gives
$$
\mathbb{P}\left( \max_{1 \le i \le n} \left| \frac{D_n(i)}{n-1} - h(\varepsilon W_i) \right| > \eta \right) \le \sum_{i=1}^n 2 e^{-2(n-1)\eta^2} = 2n e^{-2(n-1)\eta^2} \longrightarrow 0,
$$
as $n \to \infty$. This proves the uniform concentration over the entire graph, from which the convergence for any specific $v_u$ follows. Note that replacing $n-1$ with $n$ in the denominator introduces an error of order $O(1/n)$ uniform in $i$, which vanishes asymptotically. Thus, we can replace $n-1$ with $n$ without problems. 
\end{proof}

\begin{lemma}[Joint Distribution of Extreme Degrees]\label{hofinitoinomi}
Let $u \in \mathbb{N}$. Then the $u$-th largest degree is realized by the vertex with the $u$-th largest weight. In particular, for a fixed integer $k \ge 1$, as $n \to \infty$, it holds:
\begin{equation*}
   \frac{\vec{\Delta}^{(k)}}{n}:=\left( \frac{\Delta_{(1)}}{n}, \dots, \frac{\Delta_{(k)}}{n} \right) \xrightarrow{d} \left( h(Z_{(1)}), \dots, h(Z_{(k)}) \right).
\end{equation*}
\end{lemma}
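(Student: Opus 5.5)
Our plan combines three ingredients: the uniform degree concentration of Lemma~\ref{lem:uniform_concentration}, the point process convergence $\Pi_n\Rightarrow\Pi$ of Lemma~\ref{PPPCOLORLLARY}, and the monotonicity of $h$.

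\textbf{Step 1 (extremes of $\Pi_n$).} First we show that $(\varepsilon W_{(1)},\dots,\varepsilon W_{(k)})\xrightarrow{d}(Z_{(1)},\dots,Z_{(k)})$. This is the standard fact that vague convergence of point processes on $(0,\infty]$ gives convergence of the top $k$ order statistics. The map $\mu\mapsto(z_{(1)}(\mu),\dots,z_{(k)}(\mu))$ is continuous at every $\mu$ that has at least $k$ simple atoms, all with finite coordinates. The limit $\Pi$ has these properties almost surely: it is simple, it has infinitely many atoms accumulating at $0$, and $\nu(\{\infty\})=0$. Moreover, w.h.p.\ $|\WW_\delta|\ge k$ (indeed $|\WW_\delta|\sim\mathrm{Bin}(n,n^{\delta-1})$), so the top $k$ weights overall are exactly the top $k$ atoms of $\Pi_n$. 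The continuous mapping theorem then gives the claim. Concretely, one can argue via $\Pp(\varepsilon W_{(j)}\le x_j\ \forall j)$, which is expressed through the counts $\Pi_n[x,\infty]$, and these counts converge jointly in law to Poisson counts of $\Pi$.

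\textbf{Step 2 (ordering).} Next we show that w.h.p.\ $\Delta_{(u)}=D_n(v_u)$ for $u\le k$. This is the step I expect to be the main obstacle, because $h$ is constant ($=1$) on $[1,\infty)$. Several hubs may therefore all have degree close to $n$, and several may even have degree exactly $n-1$. In that case the ordering is not literally determined by the weights: ties are broken arbitrarily. Fortunately the vector of degree values is unaffected by tie-breaking, so I will prove the weaker statement that suffices for the limit: $|\Delta_{(u)}/n-h(\varepsilon W_{(u)})|\to0$ in probability for each $u\le k$.

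Write $d_i=D_n(i)/n$ and $a_i=h(\varepsilon W_i)$. Lemma~\ref{lem:uniform_concentration} gives $\max_i|d_i-a_i|\le\eta$ w.h.p. Since $h$ is nondecreasing, the $u$-th largest of the $a_i$ equals $h(\varepsilon W_{(u)})$. The map sending a vector to its $u$-th order statistic is $1$-Lipschitz in sup norm. Hence $|\Delta_{(u)}/n-h(\varepsilon W_{(u)})|\le\eta$ w.h.p.

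The literal claim that the $u$-th largest degree is attained at $v_u$ needs more care. On the event $Z_{(k)}<1$ with distinct $h$-values, strict monotonicity of $h$ on $(0,1)$ and a.s.\ distinctness of the $Z_{(j)}$ give gaps bounded below in probability, so for small $\eta$ the ordering is preserved. When some $Z_{(j)}\ge1$, ties occur, and I would state the identification only up to ties.

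\textbf{Step 3 (conclusion).} The function $h$ is continuous on $(0,\infty]$ once we set $h(\infty)=1$. By Step 1 and the continuous mapping theorem, $(h(\varepsilon W_{(u)}))_{u\le k}\xrightarrow{d}(h(Z_{(u)}))_{u\le k}$. Step 2 and Slutsky's theorem then transfer this limit to $\vec\Delta^{(k)}/n$, which completes the proof.
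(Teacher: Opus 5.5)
Your Steps 1 and 3 coincide with the paper's argument: top-$k$ atoms via the continuous mapping theorem, then continuity of $h$ and Slutsky. You are in fact more careful than the paper in noting that w.h.p.\ $|\WW_\delta|\ge k$, so that the $k$ largest weights are atoms of $\Pi_n$.

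Step 2 is a genuinely different route. The paper first proves the identification $D_n(v_u)=\Delta_{(u)}$ w.h.p., and only then transfers the limit of $(D_n(v_u)/n)_{u\le k}$ to $(\Delta_{(u)}/n)_{u\le k}$. For the identification it splits according to whether $\varepsilon W_{(u)}\ge 1$ and, if not, whether the gap $h(\varepsilon W_{(u)})-h(\varepsilon W_{(u+1)})$ exceeds a threshold; this is controlled by Portmanteau, using that $\Pi$ has no atom at $1$. You bypass the identification entirely. Since $h$ is nondecreasing, $h(\varepsilon W_{(u)})$ is the $u$-th order statistic of $(h(\varepsilon W_i))_i$. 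Order statistics are $1$-Lipschitz in the sup norm, so $|\Delta_{(u)}/n-h(\varepsilon W_{(u)})|\le\max_i|D_n(i)/n-h(\varepsilon W_i)|\to 0$ in probability. This is correct, shorter, uniform in $u$, and insensitive to ties and to the atom of the limit at $1$. What it does not give is any information on which vertex carries which degree.

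That information is the first assertion of the lemma, and there your proposal stops short. You only sketch it on $\{Z_{(k)}<1\}$ and otherwise propose to weaken it to hold up to ties. Ties are not an obstruction, because the assertion, as the paper proves it, is an equality of values, $D_n(v_u)=\Delta_{(u)}$. This follows inductively in $u$ from the one-sided comparison $D_n(v_u)\ge D_n(j)$ for all $j\notin\{v_1,\dots,v_u\}$.

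\begin{itemize}
\item If $\varepsilon W_{(u)}\ge 1$, then $\min(\varepsilon W_{(u)}W_j,1)=1$ for every $j$. So $D_n(v_u)=n-1$ and the comparison holds deterministically, however many other hubs share that degree.
\item If $\varepsilon W_{(u)}<1$, only the gap to the $(u+1)$-st weight matters, since $h(\varepsilon W_j)\le h(\varepsilon W_{(u+1)})$ for every such $j$. Because $\Pi$ a.s.\ has no atom at $1$ and $h$ is strictly increasing on $(0,1)$, Portmanteau gives that $\limsup_n \mathbb{P}\bigl(\varepsilon W_{(u)}<1,\ h(\varepsilon W_{(u)})-h(\varepsilon W_{(u+1)})\le\gamma\bigr)\to0$ as $\gamma\to0$. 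Combined with your uniform concentration bound, this yields the comparison w.h.p.
\end{itemize}

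This argument also covers the mixed case where some, but not all, of $v_1,\dots,v_k$ are hubs, which your restriction to $\{Z_{(k)}<1\}$ leaves out.
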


\begin{proof}
From Lemma \ref{lem:uniform_concentration}, we know that for each $u \in \{1, \dots, k\}$ it holds:
\begin{equation*}
    \frac{D_n(v_u)}{n} - h(\varepsilon W_{v_u}) \xrightarrow{\mathbb{P}} 0.
\end{equation*}
Thus, since $k$ is a finite constant, we may write
\begin{equation*}
    \left(\frac{D_n(v_1)}{n}, \dots, \frac{D_n(v_k)}{n}\right) - \big(h(\varepsilon W_{(1)}), \dots, h(\varepsilon W_{(k)})\big) \xrightarrow{\mathbb{P}} \mathbf{0}.
\end{equation*}
Moreover, from Lemma \ref{PPPCOLORLLARY}, we know that $\Pi_n=\sum_{i \in \mathcal{W}_\delta}\delta_{\varepsilon W_i}$ converges weakly to $\Pi$ on $\mathcal{M}_p((0,\infty])$. Thus, since the mapping from a point measure in $\mathcal{M}_p((0,\infty])$ to the vector of its $k$ largest atoms is continuous with respect to the vague topology, (\cite[Proposition~3.13,  Corollary~4.19]{resnick1987extreme}) from the Continuous Mapping Theorem it holds
\begin{align*}
\left(\varepsilon W_{(1)}, \dots, \varepsilon W_{(k)}\right) \xrightarrow{d} (Z_{(1)}, \dots, Z_{(k)})
\end{align*}
and thus
\begin{align*}\label{convergenceofh}
\left(h(\varepsilon W_{(1)}), \dots, h(\varepsilon W_{(k)})\right) \xrightarrow{d} (h(Z_{(1)}), \dots, h(Z_{(k)}))
\end{align*}
holds, by the continuity of $h$. Applying the multivariate Slutsky's Theorem, this brings to
\begin{equation}\label{eq:heaviest_degrees}
    \left(\frac{D_n(v_1)}{n}, \dots, \frac{D_n(v_k)}{n}\right) \xrightarrow{d} \big(h(Z_{(1)}), \dots, h(Z_{(k)})\big).
\end{equation}
We now want to say that $D_n(v_u)=\Delta_{(u)}$ for $u \in [k]$ w.h.p., i.e. that the $u$-th largest degree is realized by the vertex with the $u$-th largest weight w.h.p.. In particular, we now show that for each $u \in [k]$
\begin{equation}\label{order}
    \mathbb{P}\left(D_n(v_u)\geq D_n(j) \text{ }\forall j \notin \{v_1, \dots, v_u \}\right) \to 1.
\end{equation}
i.e. each of the $v_u$ reach a degree greater than or equal to the degree of the vertices with smaller weight. We distinguish two cases: when $\varepsilon W_{(u)}\geq 1$, i.e. when $v_u$ is a hub, or when $\varepsilon W_{(u)}<1$. We have:
\begin{align}\label{A+B}
    \mathbb{P}\left(D_n(v_u)< D_n(j) \text{ }\exists j \notin \{v_1, \dots, v_u \} \right)= A+B
\end{align}
where
\begin{align*}
    &A:=\mathbb{E}\left[\mathbb{1}\left(D_n(v_u)< D_n(j) \text{ }\exists j \notin \{v_1, \dots, v_u \}\right) \mathbb{1}(\varepsilon W_{(u)}\geq 1)\right]\\
    &B:=\mathbb{E}\left[\mathbb{1}\left(D_n(v_u)< D_n(j) \text{ }\exists j \notin \{v_1, \dots, v_u \}\right) \mathbb{1}(\varepsilon W_{(u)}< 1)\right]
\end{align*}
Let us start with the term in $A$. When $\varepsilon W_{(u)}\geq 1$, the probability of connection between $v_u$ and the other vertices becomes $1$ even with fixed $n$. Thus, $v_u$ reaches the maximum possible degree $n-1$ and trivially $D_n(j)\leq n-1=D_n(v_u)$ $\forall j \notin \{v_1, \dots, v_u\}$. In particular, we may write $A=0.$
Otherwise, we observe that since $(\varepsilon W_{(u)}, \varepsilon W_{(u+1)})\xrightarrow{d} (Z_{(u)}, Z_{(u+1)})$, by the continuity of $h$ and the fact that the process $\Pi$ does not put positive mass at $1$ we also have that $[h(\varepsilon W_{(u)})-h(\varepsilon W_{(u+1)})]\mathbb{1}(\varepsilon W_{(u)}<1)$ converges in distribution to $[h(Z_{(u)})-h(Z_{(u+1)})]\mathbb{1}(Z_{(u)}<1)$. 
 When $Z_{(u)}<1$, $h(Z_{(u)})-h(Z_{(u+1)})$ is strictly positive, since $h$ is strictly increasing in $(0,1)$. In particular, for each $\eta>0$, there exists a sufficiently small $\delta>0$, $\delta=\delta(\eta)$ such that
\begin{equation*}
    \Eb\left[\mathbb{1}(h(Z_{(u)})-h(Z_{(u+1)})\leq\delta)\mathbb{1}(Z_{(u)}<1)\right]<\eta.
\end{equation*}
We now define the events
\begin{align*}
    &\mathcal{B}_1:=\{h(\varepsilon W_{(u)})-h(\varepsilon W_{(u+1)})\leq\delta , \varepsilon W_{(u)}<1 \}\\
    &\mathcal{B}_2:=\{h(\varepsilon W_{(u)})-h(\varepsilon W_{(u+1)})>\delta , \varepsilon W_{(u)}<1 \}
\end{align*}
and split $B$ in two more parts writing $B=B_1+B_2$ where
\begin{align*}
    &B_1=\mathbb{E}\left[\mathbb{1}\left(D_n(v_u)< D_n(j) \text{ }\exists j \notin \{v_1,\dots, v_u\} \right) \mathbb{1}(\mathcal{B}_1)\right]\\
   &B_2=\mathbb{E}\left[\mathbb{1}\left(D_n(v_u)< D_n(j) \text{ }\exists j \notin \{v_1,\dots, v_u\} \right) \mathbb{1}(\mathcal{B}_2)\right].
\end{align*}
For the first, we have that 
\begin{equation*}
    \limsup_{n\to\infty}B_1 \le \limsup_{n\to\infty}\mathbb{P}(\mathcal{B}_1) \le \Eb\left[\mathbb{1}(h(Z_{(u)})-h(Z_{(u+1)})\le\delta)\mathbb{1}(Z_{(u)}< 1)\right] < \eta
\end{equation*}
Instead, for the second, working on $\mathcal{B}_2$, we may write:
\begin{align*}
\min_{j \notin \{v_1, \dots, v_u \}}\frac{D_n(v_u)-D_n(j)}{n}&\geq h(\varepsilon W_{(u)})-h(\varepsilon W_{(u+1)})-2\max_{j \in [n]} \left |\frac{D_n(j)}{n}-h(\varepsilon W_j) \right |\\
&\geq \delta -2\max_{j \in [n] } \left |\frac{D_n(j)}{n}-h(\varepsilon W_j) \right |.
\end{align*}
From Lemma \ref{lem:uniform_concentration} it is clear that the last term can be made greater than $\delta/2$ w.h.p., concentrating the $\max$ term sufficiently close to $0$ depending on the choice of $\delta$. Thus, $B_2\to 0$ as $n\to \infty$. Finally, considering all the terms together, from \eqref{A+B} we have
\begin{align*}
    \limsup_{n\to \infty}\mathbb{P}(D_n(v_u)<D_n(j) \text{ } \exists j \notin \{v_1,\dots, v_u\})=\limsup_{n\to \infty} (A+B_1+B_2)< \eta
\end{align*}
Since $\eta$ can be chosen arbitrarily small, we finally obtain \eqref{order} for any $u \in [k]$. This is sufficient to say that
\begin{equation*}
    \mathbb{P}\left(D_n(v_u)=\Delta_{(u)} \text{ }\forall u \in [k]\right)\to 1 \quad\quad\quad\text{ as } n \to \infty.
\end{equation*}
From \eqref{eq:heaviest_degrees} we finally have
\begin{equation*}
\left( \frac{\Delta_{(1)}}{n}, \dots, \frac{\Delta_{(k)}}{n} \right) \xrightarrow{d} \left( h(Z_{(1)}), \dots, h(Z_{(k)}) \right).
\end{equation*}
\end{proof}

We are now ready to see the proof of Theorem \ref{thm:maxdegree}
\begin{proof}[Proof of Theorem \ref{thm:maxdegree}]
To show the weak convergence for the vague topology, it suffices to prove that $\Psi_n(f) \xrightarrow{d} \Pi[h](f)$ for any continuous, non negative test function $f$ with compact support of type $[y, 1]$, for some $y > 0$ \cite[Proposition $3.19$]{resnick1987extreme}. For any $k>1$ we may write
\begin{equation*}
    \Psi_n(f)=\sum_{i=1}^k f\left(\frac{\Delta_{(i)}}{n} \right)+\sum_{i\geq k+1} f\left(\frac{\Delta_{(i)}}{n} \right).
\end{equation*} 
By Lemma \ref{hofinitoinomi}, it holds that
\begin{equation*}
\left(\frac{\Delta_{(1)}}{n},\dots, \frac{\Delta_{(k)}}{n} \right)\xrightarrow{d}\left(h(Z_{(1)}), \dots, h(Z_{(k)}) \right)
\end{equation*}
which implies, by the Continuous Mapping Theorem,
\begin{equation*}
    \sum_{i=1}^k f\left(\frac{\Delta_{(i)}}{n} \right)\xrightarrow{d} \sum_{i=1}^k f\left(h(Z_{(i)} )\right).
\end{equation*}
On the other side, since $\Delta_{(k+1)}/n\xrightarrow{d} h(Z_{(k+1)})$ too, we get
\begin{align*}
   \lim_{n\to \infty}\mathbb{P}\left( \sum_{i\geq k+1} f\left(\frac{\Delta_{(i)}}{n} \right)>0\right)\leq \lim_{n\to \infty}\mathbb{P}\left(\frac{\Delta_{(k+1)}}{n}\geq y \right)=\Pb(h(Z_{(k+1)})\geq y)\xrightarrow[k\to \infty]{}0
\end{align*}
using the properties of the Poisson Point Process $\Pi$ that is not able to accumulate infinite points in a compact $[a,\infty]$, with $a>0$. Thus
\begin{equation*}
    \limsup_{k\to \infty}\lim_{n\to \infty}\mathbb{P}\left( \sum_{i\geq k+1} f\left(\frac{\Delta_{(i)}}{n} \right)>0\right)=0.
\end{equation*}
By Billingsley's convergence theorem \cite[Theorem 3.2]{billingsley2013convergence}, combining the convergence of the first terms with the vanishing bound on the tail, it yields $\Psi_n(f) \xrightarrow{d} \Pi_{[h]}(f)$ for all $f \in C^+_c((0,1])$.
\end{proof}

\appendix

\section{}\label{appendice1}
\subsection{Estimates of key observables}\label{modelfirstpropoerties}
Throughout this section, fix $\eps>0$ and $\alpha>0$ and consider the graph $\mathbb{G}_n(\alpha,\varepsilon)$. For any pair of distinct vertices $i\neq j\in [n]$, recall $p_{ij}$ defined in \eqref{defofp}. This is a random variable under $\P$ and, by exchangeability, $p_{ij}$ has the same distribution of $p_{12}$. We denote with
    \begin{equation}\label{p1}
        \widebar{p}:=\E[p_{12}]\quad\quad\text{ and }\quad\quad\mathbb{E}_w[p_{12}]:=\mathbb{E}[p_{12}|W_1=w]
    \end{equation}
    the probability of having an edge and the probability of having an edge knowing that the weight of one of the two adjacent vertices is a constant $w>1$, respectively. For any $m>0$, the following are true, being $\{W_i\}_i$ independent Pareto distributed:
    \begin{align*}
&\E_x[p_{12}^m]=\alpha\int_1^\infty\frac{\min(\varepsilon x y,1)^m}{y^{\alpha+1}}dy;\\
&||p_{12}||_m^m:=\E [p_{12}^m]=\alpha^2\int_1^\infty\int_1^\infty\frac{\min(\varepsilon x y,1)^m}{(xy)^{\alpha+1}}dxdy.
\end{align*}
Because of the form of our kernel and of the Pareto distribution, the above integrals can be explicitly computed. We do so in the following lemma.

\begin{lemma}[Explicit expression of $L_m$-norm]\label{norma2}
Let $m>0$, then it holds
\begin{align*}
\mathbb{E}_x[p_{12}^m]=
\begin{cases}
\frac{m}{m-\alpha}(\varepsilon x)^\alpha+\frac{\alpha}{\alpha-m}(\varepsilon x)^m 	&\alpha \neq m;\\
-(\varepsilon x)^m\ln((\varepsilon x)^m)+(\varepsilon x)^m 					&\alpha=m
\end{cases}
\end{align*}
if $x\leq 1/\varepsilon$, $\mathbb{E}_x[p_{12}^m]=1$ otherwise. Moreover 
\begin{align*}
||p_{12}||_m^m=
\begin{cases}
\left(\frac{\alpha}{\alpha-m}	\right)^2\varepsilon^m-\frac{m}{m-\alpha}\varepsilon^\alpha \ln(\varepsilon^\alpha)+\left(1-(\frac{\alpha}{\alpha-m})^2	\right)\varepsilon^\alpha 	&\alpha\neq m;\\
\frac{1}{2}\varepsilon^m \ln^2(\varepsilon^m)-\varepsilon^m\ln(\varepsilon^m)+\varepsilon^m 	&\alpha=m.
\end{cases}
\end{align*}
\end{lemma}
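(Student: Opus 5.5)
The plan is a direct computation of the defining integrals, splitting at the saturation point of the kernel $\min(\varepsilon xy,1)$. For the conditional moment, fix $x\ge1$. If $x>1/\varepsilon$, then $\varepsilon xy\ge \varepsilon x>1$ for every $y\ge1$. Hence $p_{12}\equiv1$ and $\mathbb{E}_x[p_{12}^m]=1$. If $x\le 1/\varepsilon$, I split the integral at $y_0=1/(\varepsilon x)\ge1$:
\begin{equation*}
\mathbb{E}_x[p_{12}^m]=\alpha(\varepsilon x)^m\int_1^{y_0}y^{m-\alpha-1}\,dy+\alpha\int_{y_0}^\infty y^{-\alpha-1}\,dy .
\end{equation*}
The second integral equals $(\varepsilon x)^\alpha$.

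For $\alpha\ne m$, the first integral is $\frac{\alpha}{m-\alpha}(\varepsilon x)^m\bigl((\varepsilon x)^{\alpha-m}-1\bigr)$. Adding the two pieces gives
\begin{equation*}
\Bigl(1+\tfrac{\alpha}{m-\alpha}\Bigr)(\varepsilon x)^\alpha-\tfrac{\alpha}{m-\alpha}(\varepsilon x)^m=\tfrac{m}{m-\alpha}(\varepsilon x)^\alpha+\tfrac{\alpha}{\alpha-m}(\varepsilon x)^m ,
\end{equation*}
which is the claimed formula. For $\alpha=m$, the first integral is $\alpha(\varepsilon x)^m\ln y_0=-(\varepsilon x)^m\ln((\varepsilon x)^m)$, and adding $(\varepsilon x)^m$ gives the second case.

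For the unconditional norm I integrate the conditional moment against the Pareto density: $\|p_{12}\|_m^m=\alpha\int_1^\infty \mathbb{E}_x[p_{12}^m]\,x^{-\alpha-1}\,dx$. I split at $x=1/\varepsilon$, assuming $\varepsilon\le1$ as in the paper's regime $\varepsilon\to0$.
\begin{itemize}
\item The tail $x>1/\varepsilon$ contributes $\varepsilon^\alpha$.
\item For $\alpha\ne m$, the first term $\frac{m}{m-\alpha}\varepsilon^\alpha\alpha\int_1^{1/\varepsilon}x^{-1}\,dx$ equals $-\frac{m}{m-\alpha}\varepsilon^\alpha\ln(\varepsilon^\alpha)$.
\item The second term $\frac{\alpha^2}{\alpha-m}\varepsilon^m\int_1^{1/\varepsilon}x^{m-\alpha-1}\,dx$ equals $\frac{\alpha^2}{(\alpha-m)(m-\alpha)}\varepsilon^m(\varepsilon^{\alpha-m}-1)=\bigl(\frac{\alpha}{\alpha-m}\bigr)^2(\varepsilon^m-\varepsilon^\alpha)$.
\end{itemize}
Summing these three contributions gives exactly the stated expression.

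For $\alpha=m$, I integrate $\alpha\int_1^{1/\varepsilon}\bigl[-(\varepsilon x)^m\ln((\varepsilon x)^m)+(\varepsilon x)^m\bigr]x^{-m-1}\,dx$. Substituting $t=\ln(\varepsilon x)$ gives $-m^2\varepsilon^m\int_{\ln\varepsilon}^0 t\,dt+m\varepsilon^m(-\ln\varepsilon)$. This equals $\frac12\varepsilon^m\ln^2(\varepsilon^m)-\varepsilon^m\ln(\varepsilon^m)$, and adding the tail $\varepsilon^m$ gives the claim.

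The only delicate point is bookkeeping. I need to keep the split points consistent, note that $y_0\ge1$ exactly when $x\le1/\varepsilon$, and track the signs in the factors $\frac{\alpha}{\alpha-m}$ versus $\frac{\alpha}{m-\alpha}$. There is no conceptual obstacle.
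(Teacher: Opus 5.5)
Your proposal is correct and follows the same route as the paper: you split the conditional integral at the saturation point $y=1/(\varepsilon x)$, then integrate against the Pareto density split at $x=1/\varepsilon$, where the paper implicitly uses the same $\varepsilon\le 1$ that you state explicitly. The only difference is in the case $\alpha=m$, where you substitute $t=\ln(\varepsilon x)$ while the paper expands $\ln(\varepsilon x)=\ln\varepsilon+\ln x$; your intermediate coefficient $\bigl(\frac{\alpha}{\alpha-m}\bigr)^2$ for $\alpha\neq m$ is also right, whereas the paper's displayed $\bigl(\frac{\alpha^2}{\alpha-m}\bigr)^2$ is a typo.
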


\begin{proof}
If $x>\varepsilon^{-1}$, then $\varepsilon x y\geq 1$ for all $y\geq1$ and $\min(\varepsilon xy,1)=1$. If $x\leq \varepsilon^{-1}$ 
it holds
\begin{align*}
\mathbb{E}_x[p_{12}^m]=&\int_1^{\infty}\frac{\alpha \min(\varepsilon x y,1)^m}{y^{\alpha+1}}dy
=\int_1^{\frac{1}{\varepsilon x}}\frac{\alpha (\varepsilon xy)^m}{y^{\alpha+1}}dy+\int_{\frac{1}{\varepsilon x}}^\infty\frac{\alpha}{y^{\alpha+1}}dy.
\end{align*}
When $\alpha\neq m$ this becomes
\begin{align*}
\mathbb{E}_x[p_{12}^m]=&\alpha (\varepsilon x)^m \left[\frac{y^{m-\alpha}}{m-\alpha}\right]^{\frac{1}{\varepsilon x}}_1+(\varepsilon x)^\alpha
=\frac{\alpha}{\alpha-m} (\varepsilon x)^m (1-(\varepsilon x)^{\alpha-m})+(\varepsilon x)^\alpha\\
=&\frac{\alpha}{\alpha-m}(\varepsilon x)^m +\frac{m}{m-\alpha}(\varepsilon x)^\alpha,
\end{align*}
instead, if $\alpha=m$, a log-correction kicks in 
\begin{align*}
\mathbb{E}_x[p_{12}^m]=m(\varepsilon x)^m \left[\ln(y)	\right]_1^{\frac{1}{\varepsilon x}}+(\varepsilon x)^{m}=-(\varepsilon x)^m \ln((\varepsilon x)^m)+(\varepsilon x)^{m}.
\end{align*}
Moreover, it holds that $||p_{12}||_m^m=\mathbb{E}[\mathbb{E}_{W_1}[p_{12}^m]]$, so if $\alpha \neq m$
\begin{align*}
||p_{12}||_m^m=&\int_1^{\frac{1}{\varepsilon}}\frac{\alpha}{x^{\alpha+1}}\mathbb{E}_x[p_{12}^m]dx+\int_{\frac{1}{\varepsilon}}^{\infty}\frac{\alpha}{x^{\alpha+1}}dx\\
=&\int_1^{\frac{1}{\varepsilon}}\frac{\alpha m}{m-\alpha}\varepsilon^\alpha x^{-1} +\frac{\alpha^2}{\alpha-m}\varepsilon^m x^{m-\alpha-1} dx +\varepsilon^\alpha\\
=&-\frac{\alpha m}{m-\alpha}\varepsilon^\alpha \ln(\varepsilon) +\left(\frac{\alpha^2}{\alpha-m}\right)^2 \varepsilon^m (1-\varepsilon^{\alpha-m})+\varepsilon^\alpha\\
=&-\frac{m}{m-\alpha}\varepsilon^\alpha \ln(\varepsilon^\alpha)+\left(\frac{\alpha}{\alpha-m}\right)^2\varepsilon^m +\left(1-\left(\frac{\alpha}{\alpha-m}\right)^2\right)\varepsilon^\alpha,
\end{align*}
and if $\alpha=m$
\begin{align*}
||p_{12}||_m^m=&\int_1^{\frac{1}{\varepsilon}}\frac{m\mathbb{E}_x[p_{12}^m]}{x^{m+1}}dx+\varepsilon^m\\
=&\int_1^{\frac{1}{\varepsilon}}-m^2\varepsilon^m x^{-1}\ln(\varepsilon) -m^2\varepsilon^mx^{-1}\ln(x)+m\varepsilon^m x^{-1}dx +\varepsilon^m\\
=& m^2\varepsilon^m\ln^2(\varepsilon)-m^2\varepsilon^m\left[\frac{1}{2}\ln^2(x)	\right]_1^{\frac{1}{\varepsilon}}-\varepsilon^m \ln(\varepsilon^m)+\varepsilon^m\\
=&\varepsilon^m\ln^2(\varepsilon^m)-\frac{\varepsilon^m\ln^2(\varepsilon^m)}{2}-\varepsilon^m\ln(\varepsilon^m)+\varepsilon^m\\
=&\frac{\varepsilon^m\ln^2(\varepsilon^m)}{2}-\varepsilon^m\ln(\varepsilon^m)+\varepsilon^m.
\end{align*}
\end{proof}

\begin{corollary}\label{expectedprobgivenaweight}
Consider the graph $\mathbb{G}_n(\alpha,\varepsilon)$ with $\alpha>0$. Let $i$ and $j$ be two distinct vertices. Let $x\geq1$ be a fixed constant and let $\mathbb{E}_x[p_{ij}]$ as in \eqref{p1} to indicate the probability of having the edge $\{i,j\}$ given $W_i=x$. It holds

\begin{equation*}
\mathbb{E}_x[p_{ij}]=
\begin{cases}
\frac{\alpha}{\alpha-1}\varepsilon x+\frac{1}{1-\alpha}(\varepsilon x)^\alpha		&\alpha \neq 1;\\
-x \varepsilon \ln(\varepsilon x) +\varepsilon x 						&\alpha=1\\
\end{cases}
\end{equation*}
if $x\leq 1/\varepsilon$. Otherwise $\mathbb{E}_x[p_{12}]=1$.
\end{corollary}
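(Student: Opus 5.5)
The statement is Corollary~\ref{expectedprobgivenaweight}, which is the case $m=1$ of Lemma~\ref{norma2}. I would derive it by direct substitution, and, as a sanity check, also recompute the integral directly.

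First step: condition on $W_i=x$. By independence of the weights,
\[
\mathbb{E}_x[p_{ij}]=\alpha\int_1^\infty \min(\varepsilon x y,1)\,y^{-\alpha-1}\,dy .
\]
If $x>1/\varepsilon$, then $\varepsilon x y\ge \varepsilon x>1$ for every $y\ge 1$. The integrand is then $\alpha y^{-\alpha-1}$, which integrates to $1$; this gives the case $\mathbb{E}_x[p_{12}]=1$.

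Second step: the case $x\le 1/\varepsilon$. Split the integral at $y=1/(\varepsilon x)\ge 1$. On $[1/(\varepsilon x),\infty)$ the minimum equals $1$, and this piece contributes the Pareto tail $(\varepsilon x)^\alpha$. On $[1,1/(\varepsilon x)]$ the integrand is $\alpha\varepsilon x\, y^{-\alpha}$.

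For $\alpha\neq 1$ this piece equals
\[
\frac{\alpha\varepsilon x}{1-\alpha}\bigl((\varepsilon x)^{\alpha-1}-1\bigr)=\frac{\alpha}{1-\alpha}(\varepsilon x)^\alpha+\frac{\alpha}{\alpha-1}\varepsilon x .
\]
Adding the tail gives $\bigl(\tfrac{\alpha}{1-\alpha}+1\bigr)(\varepsilon x)^\alpha=\tfrac{1}{1-\alpha}(\varepsilon x)^\alpha$, so
\[
\mathbb{E}_x[p_{ij}]=\frac{1}{1-\alpha}(\varepsilon x)^\alpha+\frac{\alpha}{\alpha-1}\varepsilon x .
\]
This matches Lemma~\ref{norma2} with $m=1$, since $\tfrac{m}{m-\alpha}=\tfrac{1}{1-\alpha}$.

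For $\alpha=1$ the piece becomes $\varepsilon x\,[\ln y]_1^{1/(\varepsilon x)}=-\varepsilon x\ln(\varepsilon x)$. The tail contributes $\varepsilon x$, giving the stated log-corrected formula.

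There is no real obstacle. The only point needing care is to check that the split point $1/(\varepsilon x)$ lies in $[1,\infty)$, which is exactly the condition $x\le 1/\varepsilon$, and to keep track of signs when $\alpha>1$ versus $\alpha<1$. The boundary $x=1/\varepsilon$ is consistent in both formulas: each evaluates to $1$ there.
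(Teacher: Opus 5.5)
Your proposal is correct and follows the paper's route. The corollary is simply the case $m=1$ of Lemma~\ref{norma2}, whose proof is the same split of the integral at $y=1/(\varepsilon x)$ that you carry out, and your computations check out, including the logarithmic case $\alpha=1$ and the consistency check at $x=1/\varepsilon$.
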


\begin{corollary}\label{probabilityofanedge}
Consider the graph $\mathbb{G}_n(\alpha,\varepsilon)$ with $\alpha>0$ and consider two arbitrary, fixed and distinct vertices $i,j$. Let $\widebar{p}$ as in \eqref{p1} to indicate the probability of having the edge $\{i,j\}$. It satisfies

\begin{align*}
\widebar{p}=
\begin{cases}
\left(\frac{\alpha}{\alpha-1}\right)^2\varepsilon-\frac{1}{1-\alpha}\varepsilon^\alpha\ln(\varepsilon^\alpha)+\left(1-\left(\frac{\alpha}{\alpha-1}\right)^2	\right)\varepsilon^\alpha	&\alpha \neq 1;\\
\frac{1}{2}\varepsilon \ln^2(\varepsilon)-\varepsilon \ln\varepsilon+\varepsilon	&\alpha=1.
\end{cases}
\end{align*}
\end{corollary}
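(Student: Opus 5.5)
The formula for $\widebar{p}=\E[p_{12}]$ in Corollary~\ref{probabilityofanedge} is the case $m=1$ of the second part of Lemma~\ref{norma2}, since $\widebar{p}=||p_{12}||_1^1$. The plan is either to invoke that lemma directly with $m=1$, or to redo the computation by integrating the conditional probability of Corollary~\ref{expectedprobgivenaweight} over the law of $W_1$. For transparency I would do the latter. Start from
\begin{equation*}
\widebar{p}=\E\big[\E_{W_1}[p_{12}]\big]=\int_1^{1/\varepsilon}\frac{\alpha}{x^{\alpha+1}}\E_x[p_{12}]\,dx+\int_{1/\varepsilon}^\infty\frac{\alpha}{x^{\alpha+1}}\,dx .
\end{equation*}
The tail piece uses $\E_x[p_{12}]=1$ for $x>1/\varepsilon$ and gives exactly $\varepsilon^\alpha$. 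We assume $\varepsilon\le 1$, which is the relevant regime since $\varepsilon\to0$.

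For $\alpha\neq1$, insert $\E_x[p_{12}]=\frac{\alpha}{\alpha-1}\varepsilon x+\frac{1}{1-\alpha}(\varepsilon x)^\alpha$ and split the first integral into two terms.

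The second term contributes
\begin{equation*}
\frac{\alpha}{1-\alpha}\varepsilon^\alpha\int_1^{1/\varepsilon}x^{-1}dx=-\frac{1}{1-\alpha}\varepsilon^\alpha\ln(\varepsilon^\alpha).
\end{equation*}
This is the logarithmic term of the statement.

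The first term contributes
\begin{equation*}
\frac{\alpha^2}{\alpha-1}\varepsilon\int_1^{1/\varepsilon}x^{-\alpha}dx=\frac{\alpha^2}{(\alpha-1)^2}\varepsilon\big(1-\varepsilon^{\alpha-1}\big)=\Big(\frac{\alpha}{\alpha-1}\Big)^2\varepsilon-\Big(\frac{\alpha}{\alpha-1}\Big)^2\varepsilon^\alpha .
\end{equation*}
Adding the tail $\varepsilon^\alpha$ yields the coefficient $\big(1-(\frac{\alpha}{\alpha-1})^2\big)$ in front of $\varepsilon^\alpha$. This matches the displayed formula.

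For $\alpha=1$, use $\E_x[p_{12}]=-\varepsilon x\ln\varepsilon-\varepsilon x\ln x+\varepsilon x$ together with the density $x^{-2}$. This gives
\begin{equation*}
\int_1^{1/\varepsilon}\Big(-\varepsilon\ln\varepsilon\,\frac1x-\varepsilon\frac{\ln x}{x}+\frac{\varepsilon}{x}\Big)dx=\varepsilon\ln^2\varepsilon-\tfrac12\varepsilon\ln^2\varepsilon-\varepsilon\ln\varepsilon .
\end{equation*}
Adding the tail $\varepsilon$ gives $\frac12\varepsilon\ln^2\varepsilon-\varepsilon\ln\varepsilon+\varepsilon$, as claimed.

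There is no real obstacle here. The only points needing care are the sign conventions when combining $\ln(1/\varepsilon)$ into $-\ln(\varepsilon^\alpha)/\alpha$, and checking that the boundary term $\varepsilon^{\alpha-1}$ from the first integral merges correctly with the tail to produce the stated $\varepsilon^\alpha$ coefficient.
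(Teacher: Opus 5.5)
Your proposal is correct and follows the paper's route: the corollary is the case $m=1$ of Lemma~\ref{norma2}, whose proof integrates $\E_x[p_{12}^m]$ against the Pareto density split at $x=1/\varepsilon$, exactly as you do. Your computations in both the $\alpha\neq1$ and $\alpha=1$ cases check out, and your caveat $\varepsilon\le1$ is the same assumption the paper makes implicitly.
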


\subsection{Proof of Theorem \ref{thm:expecteddegree}}\label{degreeofavertex}
  In this section, we present the proof of Theorem \ref{thm:expecteddegree} regarding the asymptotic behavior of $\mathbb{E}[D_n(i)]$. Theorem \ref{thm:expecteddegree} also specifies the choices for $\varepsilon_n$ that lead to the convergence in distribution of $D_n(i)$ to a mixed Poisson random variable. The proof identifies these scalings naturally, demonstrating that they are the only ones (up to  constants) that allow the convergence of $\mathbb{E}[t^{D_n(i)}]$ to the probability generating function of a non trivial random variable, which turns out to have mixed Poisson distribution. 

  We recall that the degree $D_n(1)$ of vertex $1$ is
\begin{equation*}
D_n(1)=\sum_{j\geq 2}a_{1j}
\end{equation*}
where $a_{ij}=\ind(i\leftrightarrow j)$. By exchangeability $D_n(i)\overset{d}{=}D_n(1)$ for any $i\in [n]$.

\begin{proof}[Proof of Theorem \ref{thm:expecteddegree}]

First, note that
\begin{align*}
\mathbb{E}[D_n(1)]=\mathbb{E}\Biggl[\sum_{j\geq 2} a_{1j}\Biggl]=(n-1)\mathbb{P}(1 \leftrightarrow j)=(n-1)\hat{p},
\end{align*}
so we can easily use Corollary~\ref{probabilityofanedge} to conclude~\eqref{eq:mean_degree1}. Then, let us write, for any $t\in (0,\infty)$, for any $t \in (0,1)$
\begin{align*}
\mathbb{E}\bigl[t^{D_n(1))}\bigr]=&\mathbb{E}\Bigl[t^{\sum_{j\geq 2}a_{1j}}\Bigr]=\mathbb{E}\Biggl[\prod_{j\geq 2}t^{a_{1j}}\Biggr]=\mathbb{E}\Biggl[\mathbb{E}_{W_1}\Biggl[\prod_{j\geq 2}t^{a_{1j}}\Biggr]\Biggr]=\mathbb{E}\left[ \mathbb{E}_{W_1}[t^{a_{1j}}]^{n-1} 	\right],
\end{align*}
where $\mathbb{E}_{W_1}[\cdot]$ indicates the conditional expectation given the weight $W_1$. Since, given $W_1$ and $W_j$, $\{a_{1j}\}_j$ are independent Bernoulli with parameter $\min(\varepsilon W_1 W_j,1)$, we get
\begin{align*}
\mathbb{E}_{W_1}[t^{a_{1j}}]=&\mathbb{E}_{W_1}\left[(t-1)\min(\varepsilon W_1 W_j,1)+1\right]=1+(t-1)\mathbb{E}_{W_1}[p_{12}].
\end{align*}
Then, using Corollary \ref{expectedprobgivenaweight}, when $\alpha \in (0,1)$ we obtain
\begin{align*}
\mathbb{E}_{W_1}[t^{a_{1j}}]^{n-1}=&\left(1+(t-1)\left(\frac{\alpha}{\alpha-1}\varepsilon W_1+\frac{1}{1-\alpha}(\varepsilon W_1)^\alpha\right)\right)^{n-1}\\
=&\left(1+(t-1)\frac{1}{1-\alpha}(\varepsilon W_1)^\alpha(1+o(1))\right)^{n-1}\xrightarrow[\varepsilon^\alpha=\frac{\lambda}{n} \atop n\to \infty]{} e^{(t-1)\frac{\lambda}{1-\alpha}W_1^\alpha};
\end{align*}
for $\alpha >1$
\begin{align*}
\mathbb{E}_{W_1}[t^{a_{1j}}]^{n-1}=&\left(1+(t-1)\left(\frac{\alpha}{\alpha-1}\varepsilon W_1+\frac{1}{1-\alpha}(\varepsilon W_1)^\alpha\right)\right)^{n-1}\\
=&\left(1+(t-1)\frac{\alpha}{1-\alpha}(\varepsilon W_1)(1+o(1))\right)^{n-1}\xrightarrow[\varepsilon=\frac{\lambda}{n} \atop n\to \infty]{} e^{(t-1)\lambda\mathbb{E}[W]W_1};
\end{align*}
and for $\alpha=1$
\begin{align*}
\mathbb{E}_{W_1}[t^{a_{1j}}]^{n-1}=&\left(1+(t-1)\left(-W_1 \varepsilon \ln(\varepsilon W_1) +\varepsilon W_1 \right)\right)^{n-1}\\
=&\left(1+(t-1)(-W_1\varepsilon \ln(\varepsilon))(1+o(1))\right)^{n-1}\xrightarrow[-\varepsilon\ln(\varepsilon)=\frac{\lambda}{n} \atop n\to \infty]{} e^{(t-1)\lambda W_1}
\end{align*}
for a fixed value of $W_1$. 
Since $0 \le t^{D_n(1)} \le 1$ holds almost surely for any $t \in (0, 1)$, we can apply the Dominated Convergence Theorem to conclude that
\begin{align*}
\mathbb{E}[t^{D_n(1)}]\to 
\begin{cases}
\Eb[e^{(t-1)\frac{\lambda}{1-\alpha}W^\alpha}] 	&\alpha \in (0,1);\\
\Eb[e^{(t-1)\lambda W}] 					&\alpha=1;\\
\Eb[e^{(t-1)\lambda\Eb[W]W}] 			&\alpha >1,
\end{cases}
\end{align*}
where the right hand side is the probability generating function of a mixed Poisson variable with parameter 
\begin{equation*}
\Lambda=
\begin{cases}
\frac{\lambda}{1-\alpha}W^\alpha 	&\alpha \in(0,1);\\
\lambda W 					&\alpha =1;\\
\lambda \Eb[W]W 		&\alpha >1;
\end{cases}
\end{equation*}
with $W\sim Pareto(\alpha)$. 
\end{proof}

\subsection{Dust Regime Sufficient Condition}\label{isolated_vertices_with_small_weight} We formulate this result for a slightly more general situation than $\mathbb{G}_n(\alpha,\varepsilon)$, i.e. we choose independent Pareto weights but a general symmetric function $f$ as a connection probability. In this framework, let \(G = G_n(f, \{W_i\})\) be an inhomogeneous random graph on \(n\) vertices associated with i.i.d.\ weights \(W_1,\dots,W_n\), drawn from a Pareto($\alpha$) distribution.
Conditioning on $(W_i)_{i\in[n]}$ and given an increasing symmetric continuous connection function \(f:=f_n:\mathbb{R}_+^2 \to [0,1]\) with $f_n\to0$ as $n\to \infty$, edges are drawn independently with probabilities $p_{ij} = f(W_i, W_j)$, $1 \le i < j \le n$. We write as usual $\mathbb{E}_w[p_{ij}]$ to indicate $\mathbb{P}(i\leftrightarrow j | W_i=w)=\mathbb{E}[f(w,W_j)]$ for $w\geq 1$. In a similar way we indicate $\mathbb{E}_w[p_{ij}^2]=\mathbb{E}[f(w,W_j)^2]$ and so on. 

\begin{definition}\label{S}
Let
\begin{align*}\label{S_h}
\mathcal{S}_h:=\{i\in[n] : W_i \leq h(n)\}
\end{align*}
with $h(n)=1+g(n)$ where $g(n)\searrow0$ as $n\to \infty$ be the set of \textbf{small} vertices, with weight sufficiently close to the minimum $1$. Moreover, let
\begin{equation*}\label{I_h}
\mathcal{I}_h:=\sum_{i \in [n]}\ind(W_i\leq h(n))\ind(\text{i is isolated})=\sum_{i \in \mathcal{S}_h}\ind(\text{i is isolated})
\end{equation*}
be the number of vertices in $\mathcal{S}_h$ that are \textbf{isolated} in $G$.
\end{definition}

We have the following results.

\begin{prop}\label{smallverticesmoments3}
Let $h(n)=1+g(n)$ with $g(n)n\to\infty$. Let $G$, $\mathcal{S}_h$ and $\mathcal{I}_h$ as in Definition \ref{S}. For each $\alpha>0$ it holds:
\begin{equation*}
\frac{\mathbb{E}[\mathcal{I}_h^2]}{\mathbb{E}[\mathcal{I}_h]^2}\leq \frac{1}{\mathbb{E}[\mathcal{I}_h]}+(1+o(1))e^{\phi_n}
\end{equation*}
as $n\to \infty$, where
\begin{equation*}
\phi_n=:n\left(-2\mathbb{E}_1[p_{12}]+\mathbb{E}_1[p_{12}^2]+2\frac{\mathbb{E}_{h(n)}[p_{12}]}{1-\mathbb{E}_{h{(n)}}[p_{12}]}\right).
\end{equation*}
\end{prop}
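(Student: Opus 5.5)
The plan is to expand the second moment of $\mathcal{I}_h$ into a diagonal part and an off-diagonal pair part, and to compare the pair part with $\mathbb{E}[\mathcal{I}_h]^2$ conditionally on the two weights. Writing $\mathcal{I}_h=\sum_i \ind(W_i\le h(n))\ind(i\text{ isolated})$, we have
\begin{equation*}
\mathbb{E}[\mathcal{I}_h^2]=\mathbb{E}[\mathcal{I}_h]+n(n-1)\,\mathbb{P}\bigl(W_1,W_2\le h(n),\ 1,2 \text{ isolated}\bigr).
\end{equation*}
Dividing by $\mathbb{E}[\mathcal{I}_h]^2$, the first term gives exactly $1/\mathbb{E}[\mathcal{I}_h]$. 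It therefore remains to show that the pair term is at most $(1+o(1))e^{\phi_n}\,\mathbb{E}[\mathcal{I}_h]^2$.

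For this I condition on the weights. Set $a(w):=\mathbb{E}_w[p_{12}]=\mathbb{E}[f(w,W)]$, so that
\begin{equation*}
\mathbb{E}[\mathcal{I}_h]=n\,\mathbb{E}\bigl[\ind(W_1\le h(n))(1-a(W_1))^{n-1}\bigr].
\end{equation*}
For the pair, given $W_1=w_1$ and $W_2=w_2$, the $n-2$ other vertices contribute independently. Dropping the factor $1-p_{12}\le 1$, we get
\begin{equation*}
\mathbb{P}(1,2\text{ isolated}\mid w_1,w_2)\le R(w_1,w_2)^{n-2},\qquad R(w_1,w_2):=\mathbb{E}\bigl[(1-f(w_1,W))(1-f(w_2,W))\bigr].
\end{equation*}
The key step is a two-sided comparison valid for $w_1,w_2\in[1,h(n)]$, which uses that $f$ is increasing in each argument.

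\emph{Numerator.} Since $w_1,w_2\ge 1$, we have $(1-f(w_1,y))(1-f(w_2,y))\le(1-f(1,y))^2$ pointwise. Hence
\begin{equation*}
R\le 1-2\mathbb{E}_1[p_{12}]+\mathbb{E}_1[p_{12}^2]\le \exp\bigl(-2\mathbb{E}_1[p_{12}]+\mathbb{E}_1[p_{12}^2]\bigr).
\end{equation*}

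\emph{Denominator.} Since $w_i\le h(n)$, monotonicity gives $(1-a(w_1))(1-a(w_2))\ge(1-\mathbb{E}_{h(n)}[p_{12}])^2$. Using $1-x\ge e^{-x/(1-x)}$, this is at least $\exp\bigl(-2\mathbb{E}_{h(n)}[p_{12}]/(1-\mathbb{E}_{h(n)}[p_{12}])\bigr)$.

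Combining the two bounds yields $R\le e^{\phi_n/n}(1-a(w_1))(1-a(w_2))$. I then raise this to the power $n-2$ and use $(1-a(w_i))^{n-2}\le(1-a(w_i))^{n-1}/(1-\mathbb{E}_{h(n)}[p_{12}])$. After multiplying by the indicators and integrating over the independent $W_1,W_2$, the bound factorises:
\begin{equation*}
n(n-1)\,\mathbb{P}(\cdots)\le \frac{e^{\phi_n(n-2)/n}}{(1-\mathbb{E}_{h(n)}[p_{12}])^2}\,\mathbb{E}[\mathcal{I}_h]^2.
\end{equation*}

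Finally I absorb the correction factors into $(1+o(1))$. Since $f_n\to0$, bounded convergence gives $\mathbb{E}_{h(n)}[p_{12}]\to0$; here I would use that $f_n\to0$ pointwise with $0\le f_n\le 1$, together with $h(n)$ bounded. Each bracket in $\phi_n$ is therefore $o(1)$, so $|\phi_n|/n\to0$ and $e^{\phi_n(n-2)/n}=e^{\phi_n}e^{-2\phi_n/n}=(1+o(1))e^{\phi_n}$. The hypothesis $g(n)n\to\infty$ is not really needed for the inequality itself. It only guarantees that $\mathcal{S}_h$ is nonempty with high probability, so that $\mathbb{E}[\mathcal{I}_h]>0$ and the ratio makes sense.

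The step I expect to be the main obstacle is the numerator/denominator comparison. One has to choose the bounds so that exactly the combination $\phi_n$ appears: the lower-weight endpoint $1$ controls $R$ from above, and the upper endpoint $h(n)$ controls the product of marginals from below. This is precisely where monotonicity of $f$ and the restriction to small vertices are essential.
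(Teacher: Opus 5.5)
Your proposal is correct and is essentially the paper's argument. It uses the same split $\mathbb{E}[\mathcal{I}_h^2]=\mathbb{E}[\mathcal{I}_h]+n(n-1)\,\mathbb{P}(\cdots)$ after dropping the factor $1-p_{12}$. Monotonicity of $f$ is applied at the same endpoints: $1$ for the pair term and $h(n)$ for the first moment. The two elementary inequalities $1-x\le e^{-x}$ and $1-x\ge e^{-x/(1-x)}$ are also the same.

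The only difference is organisational. You make the comparison conditionally on $(W_1,W_2)$ and then integrate against the exact $\mathbb{E}[\mathcal{I}_h]^2$. The paper instead bounds $\mathbb{E}[\mathcal{I}_h]\ge np_n(1-\mathbb{E}_{h(n)}[p_{12}])^{n-1}$ and the pair term by $n^2p_n^2(1-2\mathbb{E}_1[p_{12}]+\mathbb{E}_1[p_{12}^2])^{n-2}$ separately, with $p_n=\mathbb{P}(W\le h(n))$, and then divides. Your explicit justification that $\mathbb{E}_{h(n)}[p_{12}]\to0$, via monotonicity and boundedness of $h(n)$, fills in a step the paper leaves implicit in its $1+o(1)$ factor.
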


\begin{proof}[Proof of Proposition \ref{smallverticesmoments3}]
The hypothesis $g(n)n\to \infty$ ensures then $S_h$ is non-empty w.h.p., in fact $|S_h| \sim \text{Bin}(n,p_n)$ with $p_n=1-\left(1+g(n)\right)^{-\alpha}\simeq \alpha g(n)$, thus
\begin{align*}
\notag&\mathbb{P}(|S_h|=0)=\left(1+g(n)\right)^{-\alpha n}
\to 0.
\end{align*}
Since $f$ is increasing,  when $1 \in S_h$ it holds $\mathbb{E}[p_{1j}]\leq \mathbb{E}_{h(n)}[p_{1j}]$. Thus, we have
\begin{align}\label{I}
\notag\mathbb{E}[\mathcal{I}_h]=&\mathbb{E}\left[\sum_{i \in [n]}\ind(W_i \leq h(n))\ind(\text{i is isolated})\right]
=n\mathbb{E}\left[\ind(W_1 \leq h(n))\prod_{j\geq 2} (1-p_{1j}) \right]\\
\geq & np_n\mathbb{E}[1-f(h(n),W_2)]^{n-1}=np_n\left(1-\mathbb{E}_{h(n)}[p_{12}] \right)^{n-1}.
\end{align}
Moreover,
\begin{align*}
\mathcal{I}_h^2=&\left(\sum_{i\in{[n]}}\ind(W_i\leq h(n))\ind(\text{i is isolated})\right)^2\\
=&\sum_{i\in[n]}\ind(W_i\leq h(n))^2\ind(\text{i is isolated})^2+\sum_{i,j\in[n]\atop i \neq j}\ind(W_i,W_j\leq h(n))\ind(\text{i and j are isolated})\\
=&\mathcal{I}_h+\sum_{i,j\in[n]\atop i \neq j}\ind(W_i,W_j\leq h(n))\ind(\text{i and j are isolated}).
\end{align*}
Using this time $\mathbb{E}[p_{ij}]\geq\mathbb{E}_1[p_{ij}]$ with $i=1$ and $2$ we obtain
\begin{align}\label{Nsecondmoment}
\notag\mathbb{E}[\mathcal{I}_h^2]\leq &\mathbb{E}[\mathcal{I}_h]+n(n-1)\mathbb{E}\left[\ind(W_1,W_2\leq h(n)) \prod_{j\geq 3} (1-p_{1j})(1-p_{2j}) \right]\\
\notag \leq &\mathbb{E}[\mathcal{I}_h]+n^2p^2_n\mathbb{E}\left[(1-f(1,W_3))^2\right]^{n-2}\\
=&\mathbb{E}[\mathcal{I}_h]+n^2p_n^2(1-2\mathbb{E}_1[p_{12}]+\mathbb{E}_1[p_{12}^2])^{n-2}.
\end{align}
  Using \eqref{I} and \eqref{Nsecondmoment} we have
\begin{align*}
\notag\frac{\mathbb{E}[\mathcal{I}_h^2]}{\mathbb{E}[\mathcal{I}_h]^2}\leq &\frac{1}{\mathbb{E}[\mathcal{I}_h]}+\frac{n^2p_n^2}{n^2p_n^2}\frac{(1-2\mathbb{E}_1[p_{12}]+\mathbb{E}_1[p_{12}^2])^{n-2}}{(1-\mathbb{E}_{h(n)}[p_{12}])^{2(n-1)}}\\
\notag=&\frac{1}{\mathbb{E}[\mathcal{I}_h]}+\frac{(1-\mathbb{E}_{h(n)}[p_{12}])^2}{(1-2\mathbb{E}_1[p_{12}]+\mathbb{E}_1[p_{12}^2])^2}\frac{(1-2\mathbb{E}_1[p_{12}]+\mathbb{E}_1[p_{12}^2])^{n}}{(1-\mathbb{E}_{h(n)}[p_{12}])^{2n}}\\
\notag=&\frac{1}{\mathbb{E}[\mathcal{I}_h]}+(1+o(1))\frac{(1-2\mathbb{E}_1[p_{12}]+\mathbb{E}_1[p_{12}^2])^{n}}{(1-\mathbb{E}_{h(n)}[p_{12}])^{2n}}\\
\leq&\frac{1}{\mathbb{E}[\mathcal{I}_h]}+(1+o(1))\exp\left\{n\left(-2\mathbb{E}_1[p_{12}]+\mathbb{E}_1[p_{12}^2]+2\frac{\mathbb{E}_{h(n)}[p_{12}]}{1-\mathbb{E}_{h(n)}[p_{12}]} \right)\right\}
\end{align*}
where we have used $(1-x)\leq \exp(-x)$ with $x=2\mathbb{E}_1[p_{12}]-\mathbb{E}_1[p_{12}^2]$ and $(1-x)\geq \exp(-x/(1-x))$ with $x=\mathbb{E}_{h(n)}[p_{12}]$ for $x$ small.
\end{proof}

\begin{lemma}[Dust Regime Sufficient Condition]\label{smallverticesmoment2}
    Let $h(n)=1+g(n)$ with $g(n)n\to\infty$. Let $G$, $\mathcal{S}_h$ and $\mathcal{I}_h$ as in Definition \ref{S}. Let $\alpha>0$. If it holds:    
    \begin{enumerate}
        \item $\mathbb{E}[\mathcal{I}_h]\to \infty\quad\text{ as }n\to \infty$;
        \item $\phi_n=:n\left(-2\mathbb{E}_1[p_{12}]+\mathbb{E}_1[p_{12}^2]+2\frac{\mathbb{E}_{h(n)}[p_{12}]}{1-\mathbb{E}_{h{(n)}}[p_{12}]}\right)\to0\quad \text{ as }n\to\infty$
    \end{enumerate}
    then $\mathbb{P}(\mathcal{I}_h\geq 1)\to 1$. 
\end{lemma}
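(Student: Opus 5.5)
This is the second moment method combined with Proposition~\ref{smallverticesmoments3}. Since $\mathcal{I}_h\ge 0$, the Paley--Zygmund (equivalently Cauchy--Schwarz) inequality gives
\begin{equation*}
\mathbb{P}(\mathcal{I}_h\geq 1)=\mathbb{P}(\mathcal{I}_h>0)\geq \frac{\mathbb{E}[\mathcal{I}_h]^2}{\mathbb{E}[\mathcal{I}_h^2]}.
\end{equation*}
The first step is to write $\mathcal{I}_h=\mathcal{I}_h\ind(\mathcal{I}_h\ge 1)$ and apply Cauchy--Schwarz, which yields $\mathbb{E}[\mathcal{I}_h]^2\le \mathbb{E}[\mathcal{I}_h^2]\,\mathbb{P}(\mathcal{I}_h\ge1)$. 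It then suffices to show that $\limsup_n \mathbb{E}[\mathcal{I}_h^2]/\mathbb{E}[\mathcal{I}_h]^2\le 1$.

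For the second step, I invoke Proposition~\ref{smallverticesmoments3}, whose hypotheses ($h(n)=1+g(n)$ with $g(n)n\to\infty$) coincide with those of the lemma. It gives
\begin{equation*}
\frac{\mathbb{E}[\mathcal{I}_h^2]}{\mathbb{E}[\mathcal{I}_h]^2}\leq \frac{1}{\mathbb{E}[\mathcal{I}_h]}+(1+o(1))e^{\phi_n}.
\end{equation*}
Assumption (i) sends the first term to $0$. Assumption (ii), $\phi_n\to0$, gives $e^{\phi_n}\to 1$. Hence the $\limsup$ of the ratio is at most $1$, so $\liminf_n\mathbb{P}(\mathcal{I}_h\ge1)\ge 1$, and the claim follows.

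All the substantive work sits inside Proposition~\ref{smallverticesmoments3}, which is assumed, so no real obstacle remains. The only point to check is that the $(1+o(1))$ factor there is genuinely a correction tending to $1$. That factor is the ratio $(1-\mathbb{E}_{h(n)}[p_{12}])^2/(1-2\mathbb{E}_1[p_{12}]+\mathbb{E}_1[p_{12}^2])^2$, and it tends to $1$ because $f_n\to0$ forces these single-edge expectations to vanish. This uses $f_n\to 0$ uniformly on the relevant weights, or at least in the averaged sense needed for $\mathbb{E}_1[p_{12}]$ and $\mathbb{E}_{h(n)}[p_{12}]$ to tend to $0$. If that is in doubt, I would verify it directly in the application at hand: under the scalings of Lemma~\ref{dust}, formula \eqref{A} gives $\mathbb{E}_{h(n)}[p_{12}]=O(\varepsilon^{\min(\alpha,1)}|\ln\varepsilon|)\to0$.
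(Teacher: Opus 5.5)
Your proposal is correct and matches the paper's proof: both use the second-moment (Paley--Zygmund/Cauchy--Schwarz) bound $\mathbb{P}(\mathcal{I}_h>0)\ge \mathbb{E}[\mathcal{I}_h]^2/\mathbb{E}[\mathcal{I}_h^2]$ together with Proposition~\ref{smallverticesmoments3}, where hypotheses (i) and (ii) force the ratio $\mathbb{E}[\mathcal{I}_h^2]/\mathbb{E}[\mathcal{I}_h]^2$ to tend to $1$. Your added check that the $(1+o(1))$ factor comes from $\mathbb{E}_1[p_{12}]\to 0$ and $\mathbb{E}_{h(n)}[p_{12}]\to 0$ is correct; it follows from the standing assumption $f_n\to 0$ by dominated convergence and monotonicity of $f$, and the paper leaves this step implicit.
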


\begin{proof}[Proof of Lemma \ref{smallverticesmoment2}]
  We want to count in $S_h$ the number $\mathcal{I}_h$ of vertices that are isolated in $\mathbb{G}_n(\alpha,\varepsilon)$ (see Definition \ref{S}) in order to show that $\mathbb{P}(\mathcal{I}_h\geq 1)\to1$. This implies that the graph $\mathbb{G}_n(\alpha,\varepsilon)$ has an isolated vertex of weight \textit{near} $1$ w.h.p., that obviously lead to have an isolated vertex in the graph $G$ w.h.p.
  To have $\mathbb{P}(\mathcal{I}_h\geq1)\to1$ we will use the second moment method, i.e.~we show that $\mathbb{E}[\mathcal{I}_h]^2/\mathbb{E}[\mathcal{I}_h^2]\to 1$ and use the Paley-Zygmund Inequality $\mathbb{P}(\mathcal{I}_h>0)\geq \mathbb{E}[\mathcal{I}_h]^2/\mathbb{E}[\mathcal{I}_h^2]$ to conclude. 
  In particular, we already know that $1\leq\mathbb{E}[\mathcal{I}_h^2]/\mathbb{E}[\mathcal{I}_h]^2$ holds, so it will be sufficient to show
\begin{equation}\label{dustregimeobiettivo}
\frac{\mathbb{E}[\mathcal{I}_h^2]}{\mathbb{E}[\mathcal{I}_h]^2}\leq f(n)
\end{equation}
where $f(n)$ is a suitable function satisfying $f(n)\to1$ as $n\to \infty$. In order to establish \eqref{dustregimeobiettivo} and conclude the proof, Proposition \ref{smallverticesmoments3} suggests that it is sufficient to ensure that $\mathbb{E}[\mathcal{I}_h] \to \infty$ and $\phi_n \to 0$ as $n \to \infty$.
\end{proof}

\begin{table}[p]
\centering
\fontsize{9.2pt}{13pt}\selectfont
\renewcommand{\arraystretch}{3.3}

\setlength{\tabcolsep}{6.4pt}
\begin{tabular}{|c|c|c|c|c|c|}
\hline
\cellcolor{color2}\textbf{Term}
&
\cellcolor{color2}\(\alpha\in(0,1)\)
&
\cellcolor{color2}\(\alpha=1\)
&
\cellcolor{color2}\(\alpha\in(1,2)\)
&
\cellcolor{color2}\(\alpha=2\)
&
\cellcolor{color2}\(\alpha>2\)
\\ \hline

\(A\)
&
\cellcolor{color1}
\(\displaystyle \frac{2\varepsilon^\alpha}{2-\alpha}
 +O(\varepsilon^2)\)
&
\cellcolor{color1}
\(2\varepsilon+O(\varepsilon^2)\)
&
\(O(\varepsilon^\alpha)\)
&
\(O(-\varepsilon^2\ln\varepsilon^2)\)
&
\(O(\varepsilon^2)\)
\\ \hline

\(B\)
&
\cellcolor{color1}
\(\displaystyle \frac{2\varepsilon^\alpha}{1-\alpha}
 +O(\varepsilon)\)
&
\cellcolor{color1}
\(-2\varepsilon\ln\varepsilon+2\varepsilon\)
&
\multicolumn{3}{c|}{\cellcolor{color1}
\(\displaystyle 2\mathbb E[W]\varepsilon+O(\varepsilon^\alpha)\)}
\\ \hline

\(C\)
&
\(\displaystyle \frac{2h(n)^\alpha\varepsilon^\alpha}{1-\alpha}
 +O(\varepsilon)\)
&
\(\displaystyle -2h(n)\varepsilon\ln\varepsilon
 +2h(n)\varepsilon\)
&
\multicolumn{3}{c|}{
\(\displaystyle 2h(n)\mathbb E[W]\varepsilon
 +O(\varepsilon^\alpha)\)}
\\ \hline

\(D\)
&
\cellcolor{color1}
\(\displaystyle \frac{2h(n)^\alpha\varepsilon^\alpha}{1-\alpha}
 +O(\varepsilon^{\min(1,2\alpha)})\)
&
\cellcolor{color1}
\(\begin{aligned}
 &-2h(n)\varepsilon\ln\varepsilon
   +2h(n)\varepsilon\\[-1mm]
 &\quad+O(\varepsilon^2\ln\varepsilon)
\end{aligned}\)
&
\multicolumn{3}{c|}{\cellcolor{color1}
\(\displaystyle 2h(n)\mathbb E[W]\varepsilon
 +O(\varepsilon^{\min(\alpha,2)})\)}
\\ \hline

\(\operatorname{Main}(\phi_n)\)
&
\(\displaystyle
 \frac{2\varepsilon^\alpha n}{1-\alpha}
 \left(h(n)^\alpha-1+\frac{1-\alpha}{2-\alpha}\right)\)
&
\(\displaystyle
 2h(n)\varepsilon n
 -\frac{2\varepsilon\ln\varepsilon\,n}{\ln n}\)
&
\multicolumn{3}{c|}{
\(\displaystyle \frac{2\mathbb E[W]\varepsilon n}{\ln n}\)}
\\ \hline

\(\operatorname{Res}(\phi_n)\)
&
\(O(n\varepsilon^{\min(1,2\alpha)})\)
&
\(O(n\varepsilon^2\ln\varepsilon)\)
&
\(O(n\varepsilon^\alpha)\)
&
\(O(-n\varepsilon^2\ln\varepsilon)\)
&
\(O(n\varepsilon^2)\)
\\ \hline
\end{tabular}

\caption{
\textbf{Asymptotic behavior of the terms composing $\phi_n$}. 
}
\vspace{0.1cm}
\parbox{\linewidth}{We compute, for each value of $\alpha$ the main term of $A=\mathbb{E}_1[p_{12}^2]$, $B=2\mathbb{E}_1[p_{12}]$ and $D=2\mathbb{E}_{h(n)}[p_{12}]/(1-\mathbb{E}_{h(n)}[p_{12}])$ according \eqref{A}, with the aim of finding the main term of $\phi_n=n(D-B+A)$. The term $D$ is derived from $C=2\mathbb{E}_{h(n)}[p_{12}]$ by noticing that $(1-\mathbb{E}_{h(n)}[p_{12}])^{-1}=1+\mathbb{E}_{h(n)}[p_{12}]+O(\mathbb{E}_{h(n)}[p_{12}])$ only affects the residual part. The terms contributing to the dominant behavior of $\phi_n$ are highlighted for each regime. Since $h(n)=1+\ln(n)^{-1}$, it holds
\begin{equation*}
\phi_n\simeq 
\begin{cases}
\frac{2}{2-\alpha}\varepsilon^\alpha n	&\alpha \in (0,1),\\
2\varepsilon n 	&\alpha=1,\\
2\mathbb{E}[W] \frac{\varepsilon n}{\ln(n)} 	&\alpha>1.
\end{cases}
\end{equation*}
}

\label{table:asymptotics_phi_detailed}
\end{table}

\section*{Acknowledgements}
The authors would like to express their gratitude to the International Centre for Theoretical Sciences (ICTS), Bangalore, India, and the organizers of the program ``Challenges in Networks'' for their hospitality and financial support, during which part of this work was carried out.

\section*{AI statement}
LLMs were recently used to review a first draft of the paper. They in particular suggested a shorter proof using Hoeffding's inequality of Theorem \ref{thm:maxdegree} which has been incorporated into the current version.

\printbibliography
\end{document}